
\documentclass[reqno,15pt]{amsart}
\pagestyle{plain}
\usepackage{amsmath}
\usepackage{amssymb}
\usepackage{amscd}
\usepackage{graphics}
\usepackage{latexsym}
\usepackage{hyperref}



\theoremstyle{plain}
\newtheorem{theorem}{Theorem}[section]
\newtheorem{thm}[theorem]{Theorem}
\newtheorem{cor}[theorem]{Corollary}
\newtheorem{lem}[theorem]{Lemma}
\newtheorem{prop}[theorem]{Proposition}

\newcounter{kludge}
\setcounter{kludge}{0}
\newcounter{kludgeb}
\setcounter{kludgeb}{0}


\theoremstyle{definition}
\newtheorem{defn}[theorem]{Definition}

\newtheorem{prob}[theorem]{Problem}

\newtheorem{rmk}[theorem]{Remark}


\theoremstyle{remark}


\input xy
\xyoption{all}


\newcommand{\marpar}[1]{}
\newcommand{\mni}{\medskip\noindent}

\newcommand{\mbb}{\mathbb}
\newcommand{\QQ}{\mbb{Q}}

\newcommand{\ZZ}{\mbb{Z}}

\newcommand{\PP}{\mbb{P}}

\newcommand{\mc}{\mathcal}

\newcommand{\mcL}{\mc{L}}

\newcommand{\mf}{\mathfrak}

\newcommand{\OO}{\mc{O}}

\newcommand{\wht}{\widehat}

\newcommand{\wh}{\widehat}

\newcommand{\wt}{\widetilde}

\newcommand{\ol}{\overline}
\newcommand{\ul}{\underline}




\newcommand{\SP}{\text{Spec }}

\newcommand{\M}{\overline{\mc{M}}}

\newcommand{\m}{\overline{\text{M}}}

\newcommand{\kgnb}[1]{\m_{#1}}
\newcommand{\Kgnb}[1]{\M_{#1}}





\newcommand{\Gm}[1]{\mathbb{G}_{#1}}




\newcommand{\Xx}{X}

\newcommand{\Cc}{C}

\newcommand{\fF}{\kappa}


\newcommand{\lt}{\left}
\newcommand{\rt}{\right}

\newsavebox{\sembox}
\newlength{\semwidth}
\newlength{\boxwidth}

\newsavebox{\semrbox}
\newlength{\semrwidth}
\newlength{\boxrwidth}


\title
{Weak Approximation for Fano
  Complete Intersections in Positive Characteristic}  

\author[Starr]{Jason Michael Starr}
\address{Department of Mathematics \\
  Stony Brook University \\ Stony Brook, NY 11794 USA}
\email{jstarr@math.sunysb.edu} 

\author[Tian]{Zhiyu Tian}
\address{
Beijing International Center for Mathematical Research 77103 \\
Peking University \\
No.5 Yiheyuan Road \\
Haidian District, Beijing \\ 
China}
\email{zhiyutian@gmail.com}

\author[Zong]{Runhong Zong}
\address{
  Institut f\"{u}r Mathematik \\
  Johannes Guttenberg--Universit\"{a}t Mainz \\
  Raum 04-131 \\
  Staudingerweg 9 \\
  55128 Mainz \\
  Deutschland
}
\email{zong@uni-mainz.de}

\date{\today}

\begin{document}


\begin{abstract} 
  For a smooth curve $B$ over a field $k=\ol{k}$ with
  $\text{char}(k)=p$,  
  for
  every $B$-flat complete intersection $X_B$ in
  $B\times_{\SP k} \PP^n_k$ of type $(d_1,\dots,d_c)$, we prove weak
  approximation of $\wh{\OO}_{B,b}$-points of $X_B$ by $k(B)$-points
  at all places $b$ of (strong) potentially good reduction,
  if the Fano index is $\geq 2$ and if
  $p>\max(d_1,\dots,d_c)$. Assuming this inequality, which
  is nearly sharp, such complete intersections are \emph{separably
    uniruled by lines}, and even separably rationally connected.  This
  also applies to specializations of complex Fano manifolds with
  Picard rank $1$ and Fano index $1$ away from ``bad primes''.
\end{abstract}


\maketitle



\section{Introduction and Statement of Results} \label{sec-int} 
\marpar{sec-int}

\mni
The formulation of weak approximation is close to the infinitesimal
lifting property for smooth morphisms.  So we review this; the expert
can skip to Theorem \ref{thm-WA}.

\mni
For each integer $n\geq 0$, the functor of \textbf{projective
  $n$-space}, $\PP^n_R$, over a (commutative, unital) ring $R$
associates to every $R$-algebra $A$ all equivalence classes of
$A$-module quotients of $A^{\oplus (n+1)}$ that are locally free of
rank $1$.  These are locally equivalent to $A^\times$-orbits (under
simultaneous scaling) of $(n+1)$-tuples
$(a_0,\dots,a_n)\in A^{\oplus(n+1)}$ such that the $A$-submodule
$\langle a_0,\dots,a_n\rangle \subset A$ equals $A$.

\mni
A \textbf{projective $R$-scheme in} $\PP^n_{R}$ is a zero locus
$\Xx_R = \text{Zero}(g_1,\dots,g_c)$ for a $c$-tuple $(g_1,\dots,g_c)$
of homogeneous polynomials $g_j\in R[t_0,\dots,t_n]$ of degree
$\text{deg}(g_j)=d_j>0$.  An $A^\times$-orbit $[a_0,\dots,a_n]$ is in
$\Xx_R$ if and only if $g_j(a_0,\dots,a_n)$ equals $0$ for every
$j=1,\dots,c$.  For $S$ an $R$-algebra, the \textbf{base change}
scheme $\Xx_S\subset \PP^n_S$ represents the restriction of this
functor to $S$-algebras.

\mni
For $c\leq n$, the \textbf{Jacobian ideal} of the projective
$R$-scheme $\Xx_R=\text{Zero}(g_1,\dots,g_c)$ is the homogeneous ideal
in in $R[t_0,\dots,t_n]/\langle g_1,\dots,g_c\rangle$ generated by all
$c\times c$ minors of the $c\times (n+1)$ Jacobian matrix
$[\partial g_j/\partial t_i]$.  By the Jacobian criterion, the
$R$-\textbf{smooth locus of relative dimension} $n-c$ is the open
complement in $\Xx_R$ of the zero locus of the Jacobian ideal.  If
this equals all of $\Xx_R$, then $\Xx_R$ is an \textbf{$R$-smooth
  complete intersection}.  In this case, it is also $R$-\textbf{flat
  of relative dimension} $n-c$: for every $R$-algebra $\fF$ that is a
field, the $\fF$-scheme $\Xx_\fF$ has dimension $n-c$.

\mni
For a general $R$-scheme $\Xx_R$, the $R$-\textbf{smooth locus of
  relative dimension} $n-c$ is the union of all open subschemes of
$\Xx_R$ that are $R$-isomorphic to an open subscheme of the $R$-smooth
locus of relative dimension $n-c$ for $\text{Zero}(g_1,\dots,g_c)$ as
above.  If this equals all of $\Xx_R$, then $\Xx_R$ is
$R$-\textbf{smooth of relative dimension} $n-c$.


\subsection{Weak Approximation over Function Fields} 
\label{subsec-WA} \marpar{subsec-WA}

\mni
A quasi-projective $R$-scheme $\Xx_R$ is $R$-smooth of some (locally
constant) relative dimension precisely if the following map is
surjective for every $e\geq 0$ and for every $R$-algebra $\wht{\OO}$
that is a complete, local Noetherian ring with unique maximal ideal
$\wh{\mf{m}}$,
$$
\rho_{\Xx_R,\wht{\OO},e}:\Xx_R(\wh{\OO}) \to \Xx_R(\wh{\OO}/\wh{\mf{m}}^{e+1}).
$$

\mni
The coordinate ring $R=\OO_B$ of a connected, affine Dedekind scheme
$B$ is a normal, Noetherian, integral domain with maximal ideal
$\mf{m}_{B,b}$ for every closed point $b\in B$.  The associated local
ring $\OO_{B,b}$ is a \textbf{discrete valuation ring} (DVR):
$\mf{m}_{B,b}\subset \OO_{B,b}$ is a free module of rank $1$.  Denote
by $\wh{\OO}_{B,b}$, resp. by $\wh{\mf{m}}_{B,b}$, the
$\mf{m}_{B,b}$-completion
$\varprojlim_e \OO_{B,b}/\mf{m}_{B,b}^{e+1}$, resp. the completion of
$\mf{m}_{B,b}$.  Denote the restriction map by
$$
\rho_{\Xx_B,b,e}:\Xx_B(\wh{\OO}_{B,b})\to
\Xx_B(\wh{\OO}_{B,b}/\wh{\mf{m}}_{B,b}^{e+1}).
$$
A projective $B$-scheme $\Xx_B$ satisfies \textbf{weak approximation}
at distinct closed points $(b_1,\dots,b_\delta)$ of $B$ if for every
integer $e\geq 0$ the following two maps have equal image,
$$
\rho_{\Xx_B,b_1,\dots,b_\delta,e}:
\Xx_B(B) \to
\prod_{i=1}^\delta \Xx_B(\wh{\OO}_{B,b_i}/\wh{\mf{m}}_{B,b_i}^{e+1}),
$$
$$
\prod_{i=1}^\delta\rho_{\Xx_B,b_i,e}
:
\prod_{i=1}^\delta \Xx_B(\wh{\OO}_{B,b_i})
\to
\prod_{i=1}^\delta
\Xx_B(\wh{\OO}_{B,b_i}/\wh{\mf{m}}_{B,b_i}^{e+1}).
$$
When $\Xx_B$ is $B$-smooth over an open $U\subset B$ containing all
$b_i$ then each $b_i$ is a \textbf{place of good reduction}, in which
case weak approximation is equivalent to surjectivity of the first map
$\rho_{\Xx_B,b_1,\dots,b_\delta,e}$ for every integer $e\geq 0$.

\mni
Similarly, a closed point $b$ of $B$ is a \textbf{place of potentially
  good reduction as a complete intersection} if there exists a local
homomorphism of DVRs,
$$
(\wh{\OO}_{B,b},\wh{\mf{m}}_{B,b})\to (\wh{\OO}',\wh{\mf{m}}'),
$$
and an $\wh{\OO}'$-smooth complete intersection $\Xx'_{\wh{\OO}'}$
with tame, Galois extension,
$$
\Gamma = \text{Aut}(\text{Frac}(\wh{\OO}')/\text{Frac}(\wh{\OO}_{B,b})),
$$
such that $\Xx'_{\text{Frac}(\wh{\OO}')}$ is
$\text{Frac}(\wh{\OO}')$-isomorphic to the base change
$\Xx_{\text{Frac}(\wh{\OO}')}$ of the generic fiber
$\Xx_{\text{Frac}(R)}$.  If also the natural $\Gamma$-semilinear
structure on $\Xx_{\text{Frac}(\wh{\OO}')}$ extends to a
$\Gamma$-semilinear structure on $\Xx'_{\wh{\OO}'}$, then $b$ is a
\textbf{place of (strong) potentially good reduction as a complete
  intersection}.  When $\kappa(b)$ is algebraically closed of
characteristic prime to $\#\Gamma$, then the extension is
\textbf{tame}.

\begin{thm}[Weak Approximation for Complete Intersections] \label{thm-WA} \marpar{thm-WA}
  For every smooth, affine, connected curve $B$ over a field
  $k=\ol{k}$, for every $B$-flat complete intersection
  $\Xx_B\subset B\times_{\SP k} \PP^n_k$ of $c$ hypersurfaces of
  degrees $(d_1,\dots,d_c)$, weak approximation holds at every place
  of (strong) potentially good reduction as complete
  intersections provided that the Fano index is $\geq 2$, and
  $p:=\text{char}(k)$  satisfies $p>\max(d_1,\dots,d_c)$.
\end{thm}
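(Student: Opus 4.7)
The plan is to run the standard Hassett--Tschinkel deformation-of-combs strategy, but adapted to positive characteristic using the paper's hypothesis $p>\max(d_j)$ to preserve separability throughout. First, I would reduce the problem to the case of honest good reduction. Given a place $b_i$ of strong potentially good reduction, choose the tame Galois cover $(\wh{\OO}_{B,b_i},\wh{\mf{m}}_{B,b_i})\to (\wh{\OO}',\wh{\mf{m}}')$ with group $\Gamma$ and the $\wh{\OO}'$-smooth model $\Xx'_{\wh{\OO}'}$ equipped with the $\Gamma$-semilinear structure. Because $\Gamma$ is tame (the residue field is algebraically closed of characteristic prime to $\#\Gamma$) and because the semilinear action extends integrally, Galois descent for smooth $\Gamma$-schemes identifies $\Xx_B(\wh{\OO}_{B,b_i}/\wh{\mf{m}}_{B,b_i}^{e+1})$ with the $\Gamma$-invariants of $\Xx'(\wh{\OO}'/(\wh{\mf{m}}')^{e'+1})$ for some $e'\geq e$, and similarly for $B$-sections after a suitable finite ramified base change $B'\to B$. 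Thus it suffices to prove weak approximation in the smooth case, then symmetrize over $\Gamma$.

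Second, I would construct an initial section of $\Xx_B\to B$. The Fano index condition together with $p>\max(d_j)$ forces, by the paper's abstract, that $\Xx_B$ is separably rationally connected over $k(B)$. Hence the Graber--Harris--Starr/de~Jong--Starr theorem (in its positive-characteristic form using separable rational connectedness) produces a section $s_0\colon B\to \Xx_B$ whose image meets the smooth locus and whose restriction to the residue disk at each $b_i$ is arbitrary to zeroth order. One then needs to upgrade from $0$-th order to the given $e$-th order jet $\sigma_i\in \Xx_B(\wh{\OO}_{B,b_i}/\wh{\mf{m}}_{B,b_i}^{e+1})$.

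Third, the heart of the argument is a comb/porcupine construction. At each $b_i$, use separable uniruling by lines (and, more crucially, the existence of very free rational curves in the fiber $\Xx_{b_i}$, which again uses $p>\max(d_j)$) to attach a comb of very free teeth to $s_0(b_i)$ inside the fiber. The resulting reducible curve $\Cc_i=s_0(B)\cup(\text{teeth})$ has a normal sheaf whose $H^1$ vanishes once enough teeth of sufficiently large degree are attached, so $\Cc_i$ deforms in a smooth family over a base whose evaluation map into the jet space
$$
\Xx_B(\wh{\OO}_{B,b_i}/\wh{\mf{m}}_{B,b_i}^{e+1})
$$
can be made surjective by choosing the teeth generic and of sufficient degree. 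Smoothing the comb (which requires separability of the smoothing map, hence again the characteristic hypothesis) yields a nearby honest section $s\colon B\to\Xx_B$ with the prescribed $e$-jet at $b_i$. Doing this simultaneously at $b_1,\dots,b_\delta$ is standard once the local problem is solved, since combs at distinct fibers deform independently.

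The main obstacle is the jet-surjectivity step: showing that the evaluation map from the moduli of deformations of the porcupine into $\Xx_B(\wh{\OO}_{B,b_i}/\wh{\mf{m}}_{B,b_i}^{e+1})$ is surjective, not merely dominant, and that the smoothing remains separable so that the resulting section is genuinely close to $\sigma_i$ in the $\wh{\mf{m}}_{B,b_i}^{e+1}$-adic topology. This requires a careful normal-bundle computation in positive characteristic, leveraging $p>\max(d_j)$ to guarantee that the Jacobian-matrix computations defining the relative tangent sheaf do not degenerate and that the tangent bundle of $\Xx_{b_i}$ admits sufficiently many sections along the very free teeth. The inequality $p>\max(d_j)$ is essentially sharp precisely because it is what prevents Frobenius-type obstructions to separability of both the smoothing of combs and the surjectivity of the evaluation on jets.
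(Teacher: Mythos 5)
There is a genuine gap, and it sits exactly where you wave at ``the paper's abstract.'' The paper's own proof of Theorem \ref{thm-WA} is a three-line reduction: after the tame Galois base change the relevant closed fiber is a smooth complete intersection of type $(d_1,\dots,d_c)$; Theorem \ref{thm-main2} asserts that this fiber is separably rationally connected; and Theorem \ref{thm-TZWA2} (i.e.\ \cite[Theorem 1.5]{TZWA}, valid in positive characteristic by Remark \ref{rmk-TZWA2}) then gives weak approximation at that place. The separable rational connectedness of the closed fiber in characteristic $p>\max(d_1,\dots,d_c)$ is the new mathematical content of the paper, and you assume it rather than prove it. Its actual proof runs through Proposition \ref{prop-Y} (the evaluation map from pointed lines on the generic index-$2$ complete intersection in characteristic $0$ is generically finite of degree $\prod_i(d_i!)$), an induction on $n$ via hyperplane pencils, Corollary \ref{cor-freeline} (a generically finite uniruling of degree prime to $p$ specializes to give free lines on the closed fiber), and Theorem \ref{thm-ZTstab} to upgrade separable uniruledness to separable rational connectedness using cyclicity of the Picard group and stability of the tangent bundle. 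None of this appears in your proposal, and without it the comb strategy has no free or very free curves to attach in characteristic $p$; the hypothesis $p>\max(d_j)$ enters precisely to make $\prod_i(d_i!)$ prime to $p$, not to control ``Jacobian-matrix degenerations'' or ``separability of the smoothing.''

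A second, independent problem is your reduction to the good-reduction case. Identifying jets downstairs with $\Gamma$-invariant jets upstairs is fine, but ``then symmetrize over $\Gamma$'' is not an operation on sections: a section of $\Xx'_{\wh{\OO}'}$ descends only if it is $\Gamma$-equivariant, and producing $\Gamma$-equivariant very free combs through prescribed $\Gamma$-fixed points with prescribed jets is exactly the content of Theorem \ref{thm-TZWA} ($\Gamma$-equivariant separable rational connectedness) together with the explicit equivariant trees of rational curves of \cite[Section 4]{TZWA}. This is why the paper invokes Theorem \ref{thm-TZWA2} as a black box instead of re-running Hassett--Tschinkel. Your sketch of jet-surjectivity via ``enough teeth of large degree'' describes the good-reduction case of \cite{HT06}; at places of merely (strong) potentially good reduction the matching of higher-order jets requires the more elaborate construction that the cited theorem encapsulates.
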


\mni
The Fano index is $i:=n+1-(d_1+\dots+d_c)$.  Please compare this
theorem to Tsen's Theorem below.  Tsen's Theorem is sharp: whenever
$i\leq 0$, there are examples with $\Xx_B(B)$ empty.  Also, there are
many examples of \emph{singular} $\Xx_B$ with $i$ positive such that
$\Xx_B(B)$ is a singleton set.

\begin{thm}[Tsen's Theorem] \cite{Tsen36} \label{thm-Tsen} \marpar{lem-Tsen}
  With notation as above, there exists at least one $B$-point of
  $\Xx_B$ provided that the Fano index is $\geq 1$.
\end{thm}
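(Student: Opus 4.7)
The plan is to exhibit a single $k(B)$-rational point in the generic fibre of $\Xx_B$ and then extend it to a $B$-section by properness. Since $\Xx_B\to B$ is projective and $B$ is a smooth curve, the valuative criterion of properness lifts such a point over each closed $b\in B$ to an $\OO_{B,b}$-point, and these glue to a $B$-section. It therefore suffices to produce a nonzero tuple $(a_0,\dots,a_n)\in k(B)^{n+1}$ satisfying $g_j(a_0,\dots,a_n)=0$ for $j=1,\dots,c$, where the $g_j\in\OO_B[t_0,\dots,t_n]$ are homogeneous defining equations of $\Xx_B$ of degree $d_j$.

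For this I would run Tsen's classical parameter count. Let $\ol{B}$ be the smooth projective completion, of genus $g$, pick a point $\infty\in\ol{B}\setminus B$, and fix an integer $N$ large enough that every coefficient of every $g_j$ already lies in $H^0(\ol{B},\OO_{\ol{B}}(N\cdot\infty))$; crucially $N$ depends only on the fixed data. For a parameter $e\gg 0$ I would search for the $a_i$ inside the finite-dimensional $k$-vector space $V_e:=H^0(\ol{B},\OO_{\ol{B}}(e\cdot\infty))$, of dimension $e+1-g$ by Riemann-Roch once $e\geq 2g-1$, so the parameter space $V_e^{\oplus(n+1)}$ has $k$-dimension $(n+1)(e+1-g)$.

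Each expression $g_j(a_0,\dots,a_n)$ is then a section of $\OO_{\ol{B}}((d_je+N)\cdot\infty)$, whose vanishing amounts to at most $d_je+N+1-g$ scalar conditions, each a homogeneous polynomial of degree $d_j$ in the coordinates of the $a_i$. The simultaneous solution locus is a projective subvariety of $\PP(V_e^{\oplus(n+1)})$, of projective dimension $(n+1)(e+1-g)-1$, cut out by at most $\sum_j(d_je+N+1-g)$ hypersurfaces. By the projective dimension theorem the intersection is nonempty whenever the number of hypersurfaces does not exceed the ambient projective dimension; the difference is $(n+1-\sum_j d_j)e+O(1)=ie+O(1)$, which is strictly positive for $e$ sufficiently large because $i\geq 1$. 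Since $k$ is algebraically closed, this nonempty projective $k$-scheme has a $k$-point, which provides the desired $(a_0,\dots,a_n)\in k(B)^{n+1}$, and one checks it is nonzero by working in the projectivization from the outset.

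The main obstacle is formal book-keeping rather than a substantive difficulty: one must confirm that $N$ can be chosen independent of $e$, so that the count of conditions grows only linearly in $e$ with slope $\sum_j d_j$, strictly below the slope $n+1$ of the parameter dimension; and one invokes the standard fact that an intersection of $r$ hypersurfaces in a projective space of dimension at least $r$ is non-empty. Neither point restricts the characteristic of $k$, matching the universality of the statement and contrasting with the more delicate restrictions required in Theorem \ref{thm-WA}.
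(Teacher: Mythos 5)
The paper offers no proof of this statement: it is quoted as a classical result with the citation \cite{Tsen36}, so there is nothing internal to compare against. Your argument is the standard Tsen $C_1$-field parameter count, which is essentially the content of the cited reference, and its overall structure is sound: produce a nontrivial common zero of the $g_j$ over $k(B)$ by comparing the growth rates $(n+1)e$ of parameters against $(\sum_j d_j)e$ of conditions, using $i=n+1-\sum_j d_j\geq 1$, and then extend the resulting $k(B)$-point of the proper scheme $\Xx_B$ to a $B$-section over the Dedekind base $B$. The reduction to a point of the generic fiber, the use of the projective dimension theorem (an intersection of $r$ hypersurfaces in $\PP^M$ with $r\leq M$ is nonempty), and the observation that no restriction on $\operatorname{char}(k)$ enters are all correct.

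One book-keeping step as written is wrong, though trivially repairable. The coefficients of the $g_j$ lie in $\OO_B=H^0(B,\OO_B)$, so they are regular on $B$ but may have poles at \emph{any} of the finitely many points of $\ol{B}\setminus B$, not only at your chosen $\infty$; if $\ol{B}\setminus B$ has more than one point, there is in general no $N$ with all coefficients in $H^0(\ol{B},\OO_{\ol{B}}(N\cdot\infty))$. You should instead fix the reduced boundary divisor $E=\sum_{q\in\ol{B}\setminus B}q$ (or any effective divisor supported on the boundary and dominating the polar divisors of all coefficients), choose $N$ with all coefficients in $H^0(\ol{B},\OO_{\ol{B}}(NE))$, and search for the $a_i$ in $H^0(\ol{B},\OO_{\ol{B}}(eE))$. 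The count closes exactly as before: the surplus of parameters over conditions is $e\,\deg(E)\,i+O(1)$, still positive for $e\gg 0$ since $\deg(E)\geq 1$ and $i\geq 1$.
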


\mni
In characteristic $0$, the \textbf{Weak Approximation Conjecture} of
Hassett and Tschinkel predicts that weak approximation holds at all
places for every $B$-flat projective variety $\Xx_B$ whose geometric
generic fiber $\Xx_{\ol{\text{Frac}(B)}}$ is smooth and
\emph{separably rationally connected}.  The Weak Approximation
Conjecture in characteristic $0$ was proved by Hassett and Tschinkel
at places of good reduction, \cite{HT06}, and it was proved by the
second and third authors at places of (strong) potentially good
reduction, \cite{TZWA}.  In particular, this proves Theorem
\ref{thm-WA} in characteristic $0$.

\mni
The proofs of all of these theorems, including Theorem \ref{thm-WA},
make essential use of the \emph{infinitesimal deformation theory} over
complete local rings such as $\wht{\OO}_{B,b}$ for closed subschemes
of a projective scheme, as developed by Kodaira-Spencer, Mike Artin,
Illusie, et al.  They also make essential use of \emph{rational
  curves} in geometric fibers of $\Xx_R$ (images of non-constant
morphisms with domain $\PP^1$), particularly those \emph{free rational
  curves}, resp. \emph{very free rational curves}, such that
infinitesimal deformations of the morphism freely extend even after
precomposing by a positive-degree self-map of $\PP^1$, resp. such that
for each effective divisor in $\PP^1$, after precomposing by a
self-map of $\PP^1$ of sufficiently positive degree, infinitesimal
deformations of the morphism relative to the divisor extend freely.
Smooth projective schemes that contain such a rational curve are
\emph{separably uniruled}, resp. \emph{separably rationally
  connected}; this is intimately connected to positivity properties of
the tangent bundle $T_{\Xx}$.


\subsection{Separable Rational Connectedness and Weak Approximation} 
\label{subsec-RC} \marpar{subsec-RC}

\mni
A smooth projective variety $\Xx$ is \textbf{Fano} if the first Chern
class $c_1(T_{\Xx})$ of the tangent bundle $T_{\Xx}$ is ample, in
which case the \textbf{Fano index} is the largest positive integer $i$
such that $c_1(T_{\Xx})$ equals $i$ times an (integral) Cartier
divisor class.  For smooth complete intersections in $\PP^n$ of
dimension $n-c\geq 3$, the \emph{Picard group is cyclic} (every
projective embedding is obtained by a Veronese re-embedding followed
by a linear projection), so that the entire Picard group is generated
by a unique ample generator $\OO(1)$, whose Cartier divisor class is
called the \emph{hyperplane class}.  By the adjunction formula, the
first Chern class is $i$ times the hyperplane class for
$$
i = (n+1)-(d_1+\dots+d_c).
$$
Thus, the complete intersection is Fano if and only if $i\geq 1$, in
which case the Fano index equals $i$.  In particular, every curve in
$\Xx$ has $c_1(T_{\Xx})$-degree $\geq i$.

\mni
The study of rational curves on varieties has been crucial to the
study of Fano manifolds since Mori's seminal paper proving that every
Fano manifold is covered by rational curves of $c_1(T_{\Xx})$-degree
bounded by $\text{dim}(\Xx)+1$, \cite{Mori}.  In particular,
$\text{dim}(\Xx)+1$ is an upper bound for the least
$c_1(T_{\Xx})$-degree of a rational curve in $\Xx$.  This least
$c_1(T_{\Xx})$-degree is the \textbf{Fano pseudoindex} $\psi_{\Xx}$ of
$\Xx$.  For smooth complete intersections of dimension $n-c\geq 3$,
the Picard group of $\Xx$ is cyclic, and every Fano complete
intersection contains lines so that the Fano pseudoindex $\psi_{\Xx}$
equals the Fano index $i$.

\mni
Fano manifolds of Fano pseudoindex $i=1$ are special: there are entire
irreducible components of the parameter space representing
$\fF$-morphisms from $\PP^1_{A}$ to $X$ that are everywhere
\textbf{obstructed} (they have nonzero obstruction group to
infinitesimal deformations as constructed by Kodaira-Spencer, Mike
Artin, Illusie, et al.), and this persists even after precomposing
with a positive degree self-map of $\PP^1$ and allowing small
deformations.  To avoid this pathology, we must restrict to Fano
manifolds of Fano pseudoindex $\geq 2$.

\mni
A non-constant $\fF$-morphism from $\PP^1_{\fF}$ to an $\fF$-smooth,
projective variety $\Xx$ is \textbf{free} if the pullback of $T_{\Xx}$
is globally generated on $\PP^1$: this is equivalent to
unobstructedness of every morphism obtained by precomposing with a
positive degree self-map $\PP^1_{A}\to \PP^1_{A}$, and it is stable
under small deformations of the morphism.  A smooth projective variety
is \textbf{separably uniruled} if and only if it has a free rational
curve, this implies that $\Xx$ is covered by curves with positive
$c_1(T_X)$-degree, and these are equivalent in characteristic $0$,
\cite{Miyaoka-Mori86}.  Similarly an $\fF$-morphism from $\PP^1_{\fF}$
to $\Xx$ is \textbf{very free} if the pullback of $T_{\Xx}$ is ample:
this is equivalent to unobstructedness of morphisms relative to an
arbitrary effective divisor in $\PP^1_{\fF}$ for all morphisms
obtained by precomposing with a self-map of sufficiently positive
degree.  A smooth projective variety is \textbf{separably rationally
  connected} if and only if it has a very free rational curve, this
implies that $\Xx$ is covered by curves on which the restriction of
$T_{\Xx}$ is ample, and these are equivalent in characteristic $0$,
\cite[Theorem IV.3.7]{K}, \cite{BMcQ}.  Finally, if a general pair of
geometric points are connected by a free rational curve (that is not
necessarily very free), the variety is \textbf{freely rationally
  connected}, cf. \cite{Shen}.  In characteristic $0$, this is
equivalent to being separably rationally connected, but they may
differ in positive characteristic.  In characteristic $0$, Fano
manifolds are separably rationally connected, \cite{KMM92c}.

\mni
Separable rational connectedness implies weak approximation at places
of (strong) potentially good reduction by two theorems of the second
and third authors.

\begin{thm}\cite[Theorem 1.3]{TZWA} \label{thm-TZWA} \marpar{thm-TZWA}
  For every field $k=\ol{k}$, for every cyclic
  group $\Gamma$ of order prime to $\text{char}(k)$, every
  smooth, projective $k$-scheme with a $\Gamma$-action
  is $\Gamma$-separably rationally connected if and
  only if it is separably rationally connected.
\end{thm}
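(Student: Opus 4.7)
The direction $\Gamma$-SRC $\Rightarrow$ SRC is trivial: forget the $\Gamma$-equivariance of the very free rational curve. The nontrivial direction is SRC $\Rightarrow$ $\Gamma$-SRC, and the plan is an equivariant comb-and-smoothing construction that crucially uses the hypothesis $p \nmid |\Gamma|$. Write $n := |\Gamma|$ and fix a generator $\gamma$ of $\Gamma$.

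First, I would locate a $k$-rational $\Gamma$-fixed point $x \in X^{\Gamma}(k)$. Since the action of $\Gamma$ is tame ($p \nmid n$), one has a Lefschetz-style identity $\chi_{\et}(X^{\Gamma}) = \chi_{\et}(X)$ in $\ell$-adic cohomology for $\ell \neq p$; SRC forces $H^i(X, \OO_X) = 0$ for all $i > 0$, and the $\ell$-adic Euler characteristic of a rationally connected smooth projective variety is positive, so $X^{\Gamma}$ is nonempty. Choose $x$ on a component of $X^{\Gamma}$ of maximal dimension so that very free curves through $x$ are abundant.

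Next, I would pick a very free rational curve $f_0 : \PP^1 \to X$ with $f_0(0) = x$ whose tangent direction at $0$ is in general position inside the $\Gamma$-representation $T_x X$. The $n$ translates $f_i := \gamma^i \cdot f_0$ are pairwise distinct curves through $x$, and gluing them at $x$ produces a $\Gamma$-invariant comb $\pi : C \to X$ on which $\Gamma$ permutes teeth cyclically. Very freeness of each tooth makes the obstruction space $H^1(C, \pi^* T_X(-x))$ vanish (via the partial normalization sequence for $C$), and both the deformation and obstruction spaces carry natural $\Gamma$-actions. Then I would perform the smoothing equivariantly: because $|\Gamma|$ is invertible in $k$, Reynolds averaging splits every $\Gamma$-equivariant coherent cohomology group into isotypic components, so the $\Gamma$-invariants form a direct summand. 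The invariant part of the obstruction vanishes (as a summand of zero), a $\Gamma$-invariant first-order smoothing exists and integrates formally, and algebraization follows from Artin approximation. The induced $\Gamma$-action on the smoothed central fiber $\PP^1$ is determined by the characters of $\Gamma$ on the tangent lines to the teeth at $x$, giving a faithful cyclic action with two fixed points. Finally, the smoothed map is very free by the standard comb-smoothing lemma, producing the required $\Gamma$-equivariant very free rational curve.

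The main obstacle is the equivariant smoothing step, specifically verifying that the $\Gamma$-invariant smoothing direction extracted via averaging genuinely induces a $\Gamma$-action on the smoothed $\PP^1$ under which $\Gamma$ permutes the teeth's images as intended, rather than acting trivially on the deformation parameter. This reduces to identifying the $\Gamma$-character of the smoothing parameter at each node (the tensor product of the $\Gamma$-characters on the two tangent lines there) and checking its compatibility with the permutation action on teeth; it is exactly here that the tameness hypothesis $p \nmid |\Gamma|$ is indispensable, both to split the deformation space equivariantly and to ensure the resulting $\Gamma$-action on $\PP^1$ is linearizable in the standard diagonal form.
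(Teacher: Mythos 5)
This statement is imported verbatim from \cite[Theorem 1.3]{TZWA}; the paper you are reading gives no proof of it (the relevant construction is the explicit $\Gamma$-equivariant tree of rational curves and its deformation theory in Section 4 of that reference), so there is nothing internal to compare against. Judged on its own terms, your sketch has two genuine gaps. The first is the fixed-point step. What the Lefschetz fixed point formula gives for a tame finite-order automorphism $\gamma$ is $\chi(X^{\gamma}) = \sum_i (-1)^i \mathrm{tr}\bigl(\gamma^* \mid H^i_{\et}(X,\QQ_\ell)\bigr)$, not $\chi(X^{\Gamma}) = \chi(X)$; and your claim that a smooth projective rationally connected variety has positive $\ell$-adic Euler characteristic is false --- a smooth cubic threefold is rationally connected with $\chi = -6$. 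So nonemptiness of $X^{\Gamma}$ does not follow as you argue; it is itself a nontrivial ingredient of the theorem.

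The second gap is in the curve construction. The definition of $\Gamma$-separable rational connectedness used here demands, for \emph{every} pair $(y_0,y_\infty)\in Y^{\Gamma}\times_{\SP k} Y^{\Gamma}$, a $\Gamma$-equivariant very free morphism $u:(\PP^1,0,\infty)\to (Y,y_0,y_\infty)$ with $\Gamma$ acting on $\PP^1$ by roots of unity. Your configuration --- the $|\Gamma|$ translates of one very free curve glued at the single fixed point $x$ --- is not at-worst-nodal once $|\Gamma|\geq 3$ (it is a bouquet, a seminormal point of multiplicity $|\Gamma|$), so the partial-normalization sequence and the standard comb-smoothing lemma do not apply to it as written; and it makes no contact with the second prescribed fixed point $y_\infty$, so even a successful equivariant smoothing would not verify the two-point condition in the definition. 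What is needed is a $\Gamma$-invariant \emph{nodal} tree whose spine already joins $y_0$ to $y_\infty$ and on which $\Gamma$ acts with the correct characters at the nodes; producing that spine is the heart of the matter and is precisely what the explicit tree of \cite[Section 4]{TZWA} supplies. Your Reynolds-averaging/tameness mechanism for splitting off the $\Gamma$-invariant part of the deformation space is the right general tool for the equivariant smoothing step, but it is being applied to the wrong configuration, and by itself it neither produces the fixed points nor the two-pointed equivariant curve.
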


\mni
A smooth, projective, connected $k$-scheme $Y$ with an action of
$\Gamma$ by $k$-isomorphisms is $\Gamma$-\textbf{separably rationally
  connected} if $Y^{\Gamma}$ is nonempty and for every $k$-point
$(y_0,y_\infty)\in Y^{\Gamma} \times_{\SP k} Y^{\Gamma}$, there exists
a $\Gamma$-equivariant, very free $k$-morphism,
$$
u:(\PP^1_k,0,\infty) \to (Y,y_0,y_\infty),
$$
where $\Gamma$ acts faithfully on $\PP^1_k$ by scaling by roots of
unity.

\begin{thm}\cite[Theorem 1.5]{TZWA} \label{thm-TZWA2}
  \marpar{thm-TZWA2}
  For every smooth, affine, connected curve $B$ over a field
  $k=\ol{k}$, for every flat, projective $B$-scheme, weak
  approximation holds at all places of (strong) potentially good
  reduction such that the (base change) smooth fiber is separably rationally
  connected.
\end{thm}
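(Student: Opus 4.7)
The plan is to reduce the problem on $B$ to a $\Gamma$-equivariant weak approximation problem on a tame Galois cover $B' \to B$ over which $\Xx_B$ acquires good reduction, and then run an equivariant version of the Hassett--Tschinkel comb-smoothing strategy. Concretely, by the strong potentially good reduction hypothesis at the prescribed closed points $b_1,\dots,b_\delta$, after shrinking $B$ one obtains a finite tame Galois cover $\pi:B'\to B$ with group $\Gamma$ and a smooth projective $B'$-scheme $\Xx'_{B'}$ extending the base change of $\Xx_B$, equipped with a compatible semilinear $\Gamma$-action. Under this identification, $B$-sections of $\Xx_B$ correspond to $\Gamma$-equivariant $B'$-sections of $\Xx'_{B'}$, and the prescribed formal jets at the $b_i$ correspond to $\Gamma$-equivariant formal jets at their preimages $b_i'$.

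Next, I would invoke Theorem \ref{thm-TZWA}: separable rational connectedness of the smooth fiber lifts to $\Gamma$-separable rational connectedness, so that at each $\Gamma$-fixed point above a prescribed place there are $\Gamma$-equivariant very free rational curves through any pair of $\Gamma$-fixed points. This is precisely what allows the construction, for each preimage $b_i'$, of a $\Gamma$-equivariant ``tooth'' joining an arbitrary starting $B'$-section to a configuration realizing the prescribed jet at $b_i'$. One first produces some $B'$-section $\sigma_0$ of $\Xx'_{B'}$ (say by passing to a multisection and applying a Tsen-type argument, or directly when $\Xx_B$ has a $B$-section), then at each $b_i'$ attaches a $\Gamma$-equivariant comb of very free rational curves whose teeth are arranged to prescribe the desired jet of order $e$. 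The result is a $\Gamma$-equivariant reducible curve $C\subset \Xx'_{B'}$ whose handle is $\sigma_0(B')$ and whose teeth encode the jet data.

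The crux is to smooth this $\Gamma$-equivariant comb to a genuine $\Gamma$-equivariant section while preserving the prescribed jets. The obstruction to deforming $C$ while keeping the specified jets at the nodes lies in $H^1$ of a twist of the logarithmic normal bundle of $C$ by the ideal sheaf of the jet data. By attaching sufficiently many very free teeth, one makes this group vanish; the same vanishing persists in the $\Gamma$-equivariant category precisely because the teeth are $\Gamma$-very free, so that taking $\Gamma$-invariants commutes with the relevant cohomology (this uses tameness of $\Gamma$). Thus the smoothing can be performed equivariantly, and $\Gamma$-invariance of the resulting $B'$-section allows descent along $\pi$ to a $B$-section of $\Xx_B$ realizing the prescribed jets.

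The main obstacle is this last step: arranging the smoothing to be simultaneously $\Gamma$-equivariant \emph{and} to preserve the formal data of order $e$ at each $b_i'$. Tameness of $\Gamma$ (ensured by working above a place of strong potentially good reduction over an algebraically closed residue field of coprime characteristic) is essential, both for the averaging arguments that produce $\Gamma$-equivariant deformations and for the compatibility of fixed-point sets under smoothing; the $\Gamma$-equivariant very free curves supplied by Theorem \ref{thm-TZWA} provide exactly the positivity needed to kill the equivariant obstruction and complete the deformation argument.
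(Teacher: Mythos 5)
This theorem is not proved in the paper: it is quoted from \cite[Theorem 1.5]{TZWA}, with Remark \ref{rmk-TZWA2} noting only that the characteristic-$0$ hypothesis there can be dropped because the equivariant tree-of-rational-curves construction and its deformation theory work in any characteristic once the relevant fiber is assumed separably rationally connected. Your sketch --- tame Galois descent to an equivariant section problem, $\Gamma$-very free curves via Theorem \ref{thm-TZWA}, equivariant comb smoothing preserving jets --- is essentially the same approach as the cited proof.
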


\begin{rmk} \label{rmk-TZWA2} \marpar{rmk-TZWA2}
  In \cite[Theorem 1.5]{TZWA} there is a hypothesis that $k$ has
  characteristic $0$.  However, this hypothesis is only used to
  \emph{deduce} that the closed fiber is separably rationally
  connected: every smooth specialization of separably rationally
  connected varieties in characteristic $0$ is separably rationally
  connected.  The explicit tree of rational curves and infinitesimal
  deformation theory from \cite[Section 4]{TZWA} is valid in arbitrary
  characteristic.  Also, the alternative strategy of Roth-Starr works
  in arbitrary characteristic.
\end{rmk}

\begin{prob} \label{prob-FRC} \marpar{prob-FRC}
  For an $R$-smooth projective scheme $\Xx_R$ whose geometric generic
  fiber is separably rationally connected and whose closed fiber is
  freely rationally connected, is the closed fiber separably
  rationally connected?  Is the closed fiber equivariantly rationally
  connected for the action of a tame cyclic group $\Gamma$?
\end{prob}

\mni
Using Theorems \ref{thm-TZWA} and \ref{thm-TZWA2}, weak approximation
at places of (strong) potentially good reduction as complete
intersections follows from separable rational connectedness of
complete intersections.  Let $R$ be a DVR whose residue field
characteristic equals $p$.  Let $\Xx_R$ be an $R$-smooth, projective
scheme whose generic fiber is a complete intersection in $\PP^n$ of
type $(d_1,\dots,d_c)$.

\begin{thm}(Separable Rational Connectedness for
  Complete Intersections) \label{thm-main2} \marpar{thm-main2}
  If the Fano index is $\geq 2$, and if
  $p>\max(d_1,\dots,d_c)$, then the closed fiber is separably uniruled
  by free lines and freely rationally connected.  It is separably
  rationally connected if it is a complete intersection.
\end{thm}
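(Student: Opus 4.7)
The plan is to analyze the Fano scheme of lines through a general point, first on the geometric generic fiber $\Xx_\eta$ and (where applicable) directly on the closed fiber $\Xx_0$, then chain free lines into a freely rationally connecting morphism, and finally comb free lines into a very free rational curve when $\Xx_0$ is a complete intersection. The central computation is with $F_1(\Xx,x)$, the scheme of lines through a general point $x$ on a smooth Fano complete intersection $\Xx$ of index $i\geq 2$. Parameterize a line through $x$ as $t\mapsto x+tv$ with $v\in\PP^{n-1}$ in affine coordinates, and expand $g_j(x+tv)=\sum_{k=1}^{d_j}h_{j,k}(v)\,t^k$, so $F_1(\Xx,x)$ is cut out in $\PP^{n-1}$ by the $\sum_j d_j$ forms $h_{j,k}$ of degrees $k=1,\dots,d_j$. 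The inequality $p>\max d_j$ ensures $p\nmid k$ for every relevant $k$, so every Euler identity $\sum_i v_i\,\partial h_{j,k}/\partial v_i=k\cdot h_{j,k}$ is non-degenerate; combined with smoothness of $\Xx$ at $x$ and a Bertini-type argument, this forces the Jacobian of the $h_{j,k}$ to have full rank at a general $[\ell]$, so $F_1(\Xx,x)$ is smooth of expected dimension $i-2$ there. Via the identification $T_{[\ell]}F_1(\Xx,x)=H^0(\ell,N_{\ell/\Xx}(-1))$ and Riemann--Roch, this smoothness gives $H^1(\ell,N_{\ell/\Xx}(-1))=0$, forcing each summand of $N_{\ell/\Xx}$ to have non-negative degree, whence $T_\Xx|_\ell=\OO(2)\oplus N_{\ell/\Xx}$ is globally generated and $\ell$ is free.

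Apply this first to $\Xx_\eta$ and then transport to $\Xx_0$. By $R$-smoothness of $\Xx_R$, the family of free lines on $\Xx_\eta$ spreads over $R$ and specializes to lines on $\Xx_0$; upper-semicontinuity of $h^1(\PP^1,f^*T_{\Xx_0})$ preserves freeness, yielding the first assertion, separable uniruling by free lines. For the second assertion, through a general point of $\Xx_0$ the free lines sweep a locus of dimension $\geq i-1\geq 1$; iterating Koll\'ar's locus-of-lines construction, two general points $y_0,y_1\in\Xx_0$ are joined by a chain of finitely many free lines. Each link is free, so the tree of $\PP^1$'s is unobstructed ($H^1$ of the pullback of $T_{\Xx_0}$ vanishes by a Mayer--Vietoris computation on the nodal curve) and smooths to a single morphism $\PP^1\to\Xx_0$ through $y_0$ and $y_1$, which remains free by openness, giving freely rational connectedness.

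For the third assertion, assume $\Xx_0$ is itself a smooth complete intersection, and apply the first paragraph directly to $\Xx_0$. Assemble a \emph{comb} with handle a free line $C$ and with $N$ teeth $\ell_1,\dots,\ell_N$ chosen from the $(i-2)$-dimensional family of free lines attached at distinct general points of $C$. The Koll\'ar--Miyaoka--Mori smoothing argument, characteristic-free once each tooth is free, deforms the comb for $N$ sufficiently large to a very free rational curve on $\Xx_0$, so $\Xx_0$ is separably rationally connected. Combined with Theorems \ref{thm-TZWA} and \ref{thm-TZWA2} this yields the main Theorem \ref{thm-WA}. The technical heart of the proof, and the main obstacle, is the first paragraph's verification that $F_1(\Xx,x)$ is smooth of the expected dimension in characteristic $p$; the bound $p>\max d_j$ is near-sharp because below it one can arrange the Euler identity to degenerate, and then a general line can acquire negative summands in $N_{\ell/\Xx}$, breaking both freeness and the combing argument.
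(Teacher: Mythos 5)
Your overall architecture (free lines on the generic fiber, specialize, chain, comb) differs from the paper's, and the two steps you identify as routine are exactly where the paper has to work.

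First, the claim that $F_1(\Xx,x)$ is smooth of the expected dimension at a general line, for a general point $x$, under only $p>\max(d_1,\dots,d_c)$, is not established by your sketch. The Euler identities $\sum_i v_i\,\partial h_{j,k}/\partial v_i=k\,h_{j,k}$ only place each $h_{j,k}$ in the ideal of its own partials; they say nothing about the rank of the combined Jacobian of all the $h_{j,k}$, and the ``Bertini-type argument'' you invoke is generic smoothness, which is precisely what fails in characteristic $p$ and what this paper is built to circumvent. Indeed, the related AIM result quoted in Remark \ref{rmk-AIM}, which does argue directly with lines through a general point, needs the much larger bound $p>(d!)((d!)-1)^{n-d-1}$, and the paper explicitly contrasts that method with its own. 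The paper's actual route is arithmetic rather than infinitesimal: for Fano index exactly $2$ it identifies the space of pointed lines inside $\PP_{\Xx}(T_{\Xx})$ as a relative complete intersection of type $(2,3,\dots,d_1,\dots,2,3,\dots,d_c)$ over the \emph{generic} complete intersection in characteristic $0$, computes the degree of the evaluation map to be $D=\prod_i(d_i!)$ by B\'ezout (Proposition \ref{prop-Y}), handles higher index by induction on $n$ using pencils of hyperplane sections, and then uses the specialization argument of Lemma \ref{lem-fflat}, Proposition \ref{prop-pprime} and Corollary \ref{cor-freeline}: since $p>\max(d_i)$ makes $D$ prime to $p$, some irreducible component of the closed fiber of the family of pointed lines maps generically \'etale and tame onto $\Xx_{\fF}$, and those lines are free. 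No smoothness of $F_1(\Xx,x)$ in characteristic $p$ is ever asserted.

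Second, your comb step overclaims. The teeth you can attach at a point of the handle are lines through that point, whose tangent directions sweep out only an $(i-1)$-dimensional cone, in general far from spanning $T_{\Xx_0}$; so smoothing such a comb does not yield a very free curve, and the passage from ``freely rationally connected'' to ``separably rationally connected'' is a genuine open issue in characteristic $p$ (Problem \ref{prob-FRC} of the paper). The paper closes this gap only for complete intersections, by invoking stability of the tangent bundle through Theorem \ref{thm-ZTstab} (together with cyclicity of the geometric Picard group of the closed fiber via \cite{GouJav}); your chaining step in the second paragraph is likewise essentially a re-derivation of the ``freely rationally connected'' half of that cited theorem rather than an independent argument. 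To repair your write-up you would need either to prove the char-$p$ smoothness of $F_1(\Xx,x)$ you assert (which the near-sharpness examples of Proposition \ref{prop-sharpish} suggest is delicate), or to adopt the degree-and-specialization mechanism of Corollary \ref{cor-freeline}, and in either case to replace the comb argument by an appeal to tangent-bundle stability.
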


\begin{rmk} \label{rmk-AIM}
  There is a related result proved by the ``rational curves working
  group'' of the AIM Workshop \emph{Rational Subvarieties in Positive
    Characteristic}, 2016.  Among other results, this working group
  proved that for every smooth, degree $d$ hypersurface
  $\Xx_k \subset \PP^n_k$ with $i\geq 2$, if $p>(d!)((d!)-1)^{n-d-1}$,
  then every line in $\Xx_k$ containing a sufficiently general point
  is a free line.  This is relevant to positive characteristic
  extensions of the Debarre -- de Jong conjecture (where existence of
  free lines is not sufficient).  The method of proof of this result
  is quite different from the method of proof of Theorem
  \ref{thm-main2}, following \cite[Section 4.2]{JanGutt} rather than
  Corollary \ref{cor-freeline}.
\end{rmk}

\mni
The inequality in the theorem is nearly sharp.  For an integer $d>1$,
the $p$-\textbf{adic valuation} of $d$ is the unique integer $v\geq 0$
such that $d=p^ve$ for a $p$-prime integer $e$.  The integer $d$ is
$p$-\textbf{special}, resp. $p$-\textbf{nonspecial}, if $1\leq e < p$,
resp. if $e>p$.  An ordered tuple of positive integers
$(d_1,\dots,d_c)$ is $p$-\textbf{special} if for every $d_i\geq p$
that is $p$-divisible, resp. $p$-prime, the integer $d_i$ is
$p$-special, resp. the integer $d_i+1$ is $p$-special.  Otherwise, it
is $p$-\textbf{nonspecial}.  Let $k$ be an algebraically closed field
$k$ of characteristic $p$.

\begin{prop}(Non-existence of Free Lines) \label{prop-sharpish}
  \marpar{prop-sharpish} 
  For every $p$-nonspecial $(d_1,\dots,d_c)$ such that
  $\text{max}(d_1,\dots,d_c)\geq p$, for every $n$
  such that Fano index is $\geq 2$, there
  exists a smooth complete intersection in $\PP^n_{k}$ of type
  $(d_1,\dots,d_c)$, yet with no free lines.
\end{prop}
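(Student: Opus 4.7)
The plan is to construct explicit examples, modeled on the classical Hermitian hypersurface in characteristic $p$.

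\medskip\noindent
\emph{Model case $d = p+1$.} Take $X = V(g) \subset \PP^n_k$ with $g = \sum_{i=0}^n x_i^{p+1}$ and $n \geq p+2$ (so the Fano index is $\geq 2$); this is smooth since $p+1 \not\equiv 0 \pmod p$, and $d = p+1$ is $p$-nonspecial via the coprime branch ($d+1 = p+2 > p$). For any line $\ell \subset X$ parametrized by $[\mb a s + \mb b t]$, the Freshman's dream gives
\[
\lt.\frac{\partial g}{\partial x_k}\rt|_\ell \;=\; (p+1)(a_k^p s^p + b_k^p t^p),
\]
so every component $\phi_k$ of the Jacobian map $\phi\colon \OO_\ell(1)^{\oplus(n-1)} \to \OO_\ell(p+1)$ lies in the $2$-dimensional $k$-subspace $\langle s^p, t^p\rangle \subset H^0(\OO_\ell(p))$. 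Thus $\phi$ factors as
\[
\OO_\ell(1)^{\oplus(n-1)} \;\xrightarrow{A}\; \OO_\ell(1)^{\oplus 2} \;\xrightarrow{(s^p,\,t^p)}\; \OO_\ell(p+1),
\]
where $A$ is a constant matrix; a direct computation (using the constraints $\ell \subset X$ together with linear independence of $\mb a, \mb b$) shows $A$ has rank $2$. Therefore $\ker\phi$ is an extension of $\ker((s^p,t^p)) = \OO(1-p)$ by $\ker A = \OO(1)^{\oplus(n-3)}$, which splits since $H^1(\PP^1, \OO(p)) = 0$, giving
\[
N_{\ell/X} \;\cong\; \OO(1-p) \oplus \OO(1)^{\oplus(n-3)}.
\]
The negative summand $\OO(1-p)$ shows $\ell$ is not free, and $X$ has no free lines.

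\medskip\noindent
\emph{Arbitrary $p$-nonspecial $d \geq p$.} The same strategy applies with $p$ replaced by $p^v$. When $d$ is coprime to $p$, the Fermat $\sum x_i^d$ is smooth, and the partials $\partial_k g|_\ell = d(a_k s + b_k t)^{d-1}$ lie in the ``Lucas subspace'' of $H^0(\OO(d-1))$ spanned by the monomials $s^j t^{d-1-j}$ with $\binom{d-1}{j} \not\equiv 0 \pmod p$; since $d$ is not a pure power of $p$, this subspace is proper, and the analogous factorization gives a negative summand in $N_{\ell/X}$. When $d = p^v e$ is $p$-divisible with $e > p$, the pure Fermat is singular and one replaces it by a Frobenius-twisted smooth variant whose partials lie in the $p^v$-th power ideal (the cleanest case $d = p^v+1$ gives $\sum x_i^{p^v+1}$ with kernel $\OO(1-p^v) \oplus \OO(1)^{\oplus(n-3)}$; general $d = p^v e$ requires a tailored construction where the inequality $e > p$ supplies the parameter room for smoothness via Bertini). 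For a complete intersection of type $(d_1, \dots, d_c)$, fix an index $i_0$ witnessing the hypothesis with $d_{i_0} \geq p$, take $g_{i_0}$ of the above form, and supplement with Frobenius-twisted or generic $g_j$ for $j \neq i_0$; Bertini ensures smoothness of $X = V(g_1, \dots, g_c)$.

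\medskip\noindent
\emph{Main obstacle.} The crux is the descent from the hypersurface $V(g_{i_0})$ to the complete intersection $X$: a subbundle of a vector bundle on $\PP^1$ with a very negative summand need not itself have a negative summand, so one cannot merely cite $N_{\ell/X} \subset N_{\ell/V(g_{i_0})}$. One must instead analyze the full Jacobian map $\phi\colon \OO(1)^{\oplus(n-1)} \to \bigoplus_j \OO(d_j)$, using the explicit Frobenius factorization of $\phi_{i_0}$ (and of as many other $\phi_j$ as the hypothesis permits) to force a negative summand in $\ker\phi = N_{\ell/X}$ for every line $\ell$. A secondary challenge is constructing smooth Frobenius-twisted hypersurfaces in the general $p$-divisible case, where smoothness (an open condition) is in tension with the $p^v$-th power constraint on partials (a closed condition); the sharpness of $e > p$ is precisely what affords both.
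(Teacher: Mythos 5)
Your Hermitian model case $d=p+1$ is correct, but the passage to the general statement has genuine gaps, only one of which you flag. First, the step ``the analogous factorization gives a negative summand'' for general $d$ prime to $p$ is unjustified: when the Lucas subspace $V\subset H^0(\OO_\ell(d-1))$ has dimension $m>2$, the kernel of the multiplication map $V\otimes\OO_\ell(1)\to\OO_\ell(d)$ does have negative degree, but its negative summand need not meet the image of the constant matrix $A$, so $\ker\phi=N_{\ell/X}$ could a priori still be globally generated; your rank computation controls $A$ only when $m=2$. A warning sign is that you are testing the coefficients $\binom{d-1}{j}$ of the partials, whereas the hypothesis of the proposition ($p$-nonspeciality of $d+1$ when $d$ is prime to $p$) is precisely the condition that some $\binom{d}{\ell}$, $0\le \ell\le d$, vanishes mod $p$; the former condition is strictly weaker (it holds for every $d>p$ prime to $p$), so if your argument were complete it would already settle part of Problem \ref{prob-sharper}. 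Second, in the $p$-divisible case you do not actually produce a smooth example: Bertini is not available, since the constraint you need (vanishing of a fixed bihomogeneous component of $g(s+t)$) is not inherited by a general member of any natural linear system. The paper writes down the explicit polynomial $g_{n;d}=t_0t_n^{d-1}+\sum_j t_j^{d-1}t_{j+1}$ (adding $t_0^{d}$ when $p$ divides $n+1$) and checks smoothness by hand via the Jacobian ideal. Third, the complete-intersection descent, which you correctly identify as the main obstacle, is left unresolved.

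The paper sidesteps all three issues by arguing on the Fano scheme of pointed lines rather than on normal bundles. Expanding $g_{i_0}(s+t)=\sum_\ell g_{i_0,\ell}$ in bihomogeneous components, $p$-nonspeciality forces some $g_{i_0,\ell}$ to be identically zero, so the scheme of pointed lines on $X=V(g_1,\dots,g_c)$ is cut out by at least one fewer equation than expected; by Krull's Hauptidealsatz every irreducible component of every fiber of $\text{ev}:\Kgnb{0,1}((\Xx_k/k,\OO(1)),1)\to \Xx_k$ has dimension strictly larger than the expected $n-1-(d_1+\cdots+d_c)$, whereas at a free pointed line $\text{ev}$ would be smooth of exactly that relative dimension. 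This makes the complete-intersection case no harder than the hypersurface case (the remaining $g_j$ can only impose at most their expected number of conditions) and requires no control over how the negative directions sit inside $N_{\ell/X}$. To salvage your route you would need to show, for every line, that the image of $A$ meets the negative part of $\ker\mu$, and you would still need explicit smooth equations in the $p$-divisible case.
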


\begin{prob} \label{prob-sharper} \marpar{prob-sharper}
  Does the same result hold without the $p$-nonspeciality hypothesis?
\end{prob}


\subsection{Specialization of Separable Uniruledness} 
\label{subsec-spec} \marpar{subsec-spec}

\mni
Separable uniruledness in characteristic $0$ implies separable
uniruledness for smooth specializations over a field whose
characteristic is prime to an explicit integer $D$.

\begin{defn}[Indices of curves on $\Xx$] \label{defn-D} \marpar{defn-D}
  For every field $K$, for every integral projective $K$-scheme $\Xx$,
  for every $r>0$, an $r$-\textbf{uniruling} of $\Xx$ over $K$,
  $$
  (h,\pi):Y\to \Xx\times_{\SP K} M,
  $$
  is a finite morphism of $K$-varieties
  such that $\pi$ is proper and smooth with geometric fibers $\PP^1$
  and such that the morphism from the
  $r$-fold fiber product is dominant,
  $$
  h^{(r)}:Y\times_M \dots \times_M Y \to X^r, \ \ \text{pr}_i\circ
  h^{(r)} = h\circ \text{pr}_i.
  $$
  Following \cite[Definition IV.1.7.3]{K}, the $r$-\textbf{uniruling
    index}, $u_r(K,\Xx)$, is the greatest common divisor of the
  degrees of $h^{(r)}$ for all $r$-unirulings with $h^{(r)}$
  generically finite (or $0$ if there are no $r$-unirulings).  Note
  that $u_1(K,\Xx)$ is positive if and only if $\Xx_{\ol{K}}$ is
  uniruled.  For every $r>1$, also $u_r(K,\Xx)$ is positive if and
  only if $\Xx_{\ol{K}}$ is rationally connected.

  \noindent
  When $\Xx_{\ol{K}}$ is rationally connected, then for every field
  extension $L/K$, the kernel of the following degree homomorphism is
  torsion,
  $$
  \text{deg}_{L,\Xx}:\text{CH}_0(\Xx_L) \to \ZZ.
  $$
  Following \cite{ChatzLevine}, the \textbf{torsion order},
  $\text{Tor}(K,\Xx)$, equals the exponent of this torsion Abelian
  group for $L=K(X)$.

  \noindent
  Finally, for every $m\geq 0$, the $m$-\textbf{cycle index}
  equals the positive integer $i_m(K,\Xx)$ whose ideal in $\ZZ$ is
  generated by the following composite homomorphism,
  $$
  \text{CH}^m(\Xx_K)\times \text{CH}_m(\Xx_K)\xrightarrow{\cap}
  \text{CH}_0(\Xx_K) \xrightarrow{\text{deg}_{K,\Xx}} \ZZ.
  $$
\end{defn}

\begin{rmk} \label{rmk-D} \marpar{rmk-D}
  By \cite{BlochSrinivas}, the torsion order $\text{Tor}(K,\Xx)$
  divides $u_2(K,\Xx)$.  However, already for complete intersections
  in projective space of type $(d_1,\dots,d_r)$, the torsion order
  equals $\prod_i (d_i!)$, which is much smaller than $u_2(K,\Xx)$,
  \cite{ChatzLevine}, \cite[Lemma 4.3]{Rojtman}.  Also, if $\ol{K}$
  has characteristic $0$ then the following map has torsion cokernel,
  $$
  \text{CH}_1(\Xx_{\ol{K}})/\text{CH}^{\text{alg}}_1(\Xx_{\ol{K}})\to
  \text{Hom}_{\ZZ}(\text{Pic}(\Xx_{\ol{K}})/\text{Pic}^0(\Xx_{\ol{K}}),\ZZ).
  $$
  The exponent of this cokernel is better than
  $i_1(\ol{K},\Xx_{\ol{K}})$ if the Picard rank is $>1$.
\end{rmk}
  
\mni
Let $B$ be a connected, smooth, quasi-projective scheme over
$\SP \ZZ$.  For every integer $D>1$, denote by $B_D\subset B$ the open
subscheme obtained by inverting $D$, i.e., the inverse image in $B$ of
$\SP \ZZ[1/D]$.  Let $\Xx_B\to B$ be a smooth, projective morphism of
relative dimension $n$ with geometrically connected fibers.  Denote by
$\SP K\to B$ the generic point, and denote the generic fiber of
$\Xx_B$ by $\Xx=\SP K\times_B \Xx_B$.

\begin{thm}(Separable Uniruledness in Mixed
  Characteristic) \label{thm-Y2} \marpar{thm-Y2} 
  If $\Xx$ is geometrically uniruled, then for $D=u_1(K,\Xx)$, every
  fiber over $B_D$ is separably uniruled.
\end{thm}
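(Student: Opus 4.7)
The goal is to establish, for every $b \in B_D$, the divisibility
$$
u_1(\kappa(b), \Xx_b) \mid u_1(K, \Xx) = D.
$$
Granted this, since the residue characteristic $p$ of $\kappa(b)$ does not divide $D$, it also does not divide $u_1(\kappa(b), \Xx_b)$; hence the greatest common divisor defining it contains at least one summand---the degree of some $1$-uniruling of $\Xx_b$---that is coprime to $p$. Such a uniruling is a finite dominant morphism of degree prime to $p$, hence generically \'etale and so separable, and thus $\Xx_b$ is separably uniruled by \cite[Theorem IV.1.9]{K}.

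\mni
To prove the divisibility, first reduce to the case $B = \SP R$ with $R$ a discrete valuation ring: choose a smooth curve $C \hookrightarrow B$ through $b$ whose generic point dominates the generic point of $B$, obtained via Bertini as a general complete intersection of hyperplane sections through $b$, and replace $B$ by the local ring of $C$ at $b$. Base change cannot enlarge the collection of unirulings, so $u_1(\Fr(R), \Xx_{\Fr(R)}) \mid D$ and hence remains coprime to $p$. Fix any uniruling $(h, \pi) : Y \to \Xx_{\Fr(R)} \times M$ of degree $d$. Generic triviality of the $\PP^1$-bundle $\pi$ over the function field of $M$ allows one to shrink $M$ and assume $Y \cong \PP^1 \times M$.

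\mni
Now spread the uniruling: choose an integral, $R$-flat, projective closure $\mcM$ of $M$ over $R$, form $\PP^1_R \times_R \mcM$, and let $\Gamma$ be the scheme-theoretic closure of the graph of $h$ inside $\PP^1_R \times_R \mcM \times_R \Xx_R$. Passing to the normalization of $\Xx_R$ in the function field $\Fr(\Gamma)$ yields a finite surjective $R$-morphism $h_R : \mcY_R \to \Xx_R$ with $\mcY_R$ integral and $R$-flat, restricting to $(Y, h)$ on the generic fiber. Decompose the closed fiber as a Weil divisor $\mcY_b = \sum_i m_i Z_i$ in $\mcY_R$, and set $d_i := \deg(Z_i \to \Xx_b)$ for those components $Z_i$ dominating $\Xx_b$. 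Compatibility of proper pushforward with specialization of fundamental classes yields the identity $\sum_i m_i d_i = d$. Since $p \nmid d$, some $i_0$ has $p \nmid d_{i_0}$, so $Z_{i_0} \to \Xx_b$ is a finite separable cover of degree coprime to $p$.

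\mni
Finally, the component $Z_{i_0}$ inherits a rational-curve ruling from the birational equivalence $\mcY_R \sim \PP^1 \times_R \mcM$ over $R$: the family of rational curves $\{\PP^1 \times \{m\}\}_{m \in M}$ on $\mcY_K$ specializes, via its closure in $\mcY_R$, to a covering family of rational curves on $Z_{i_0}$, so $Z_{i_0}$ is uniruled. Composition with the finite separable cover $Z_{i_0} \to \Xx_b$ provides a uniruling of $\Xx_b$ of degree dividing $d$. Taking the infimum over all unirulings of $\Xx_{\Fr(R)}$ gives $u_1(\kappa(b), \Xx_b) \mid u_1(\Fr(R), \Xx_{\Fr(R)}) \mid D$, as required.

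\mni
\textbf{Main obstacle.} The crux lies in controlling the closed-fiber geometry of the spread $\mcY_R$: components of $\mcY_b$ need not inherit a $\PP^1$-bundle structure directly from $\PP^1 \times_R \mcM$ (the birational identification may have indeterminacies along $\mcY_b$), and \emph{a priori} the multiplicities $m_i$ could absorb the prime-to-$p$ content of $d$. The hypothesis $p \nmid D$, channelled through the numerical identity $\sum_i m_i d_i = d$, is precisely what guarantees a dominant component $Z_{i_0}$ on which the evaluation is genuinely separable and on which the ruled structure survives specialization.
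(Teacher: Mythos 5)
Your core mechanism --- spread a uniruling of degree $d$ prime to $p$ over a DVR, write the closed fiber as $\sum_i m_i Z_i$ with $\sum_i m_i d_i = d$, and extract a component $Z_{i_0}$ with $p\nmid d_{i_0}$, hence generically \'{e}tale and tame over the closed fiber --- is exactly the content of the paper's Proposition \ref{prop-pprime} and Corollary \ref{cor-freeline}. But two steps have genuine gaps. First, the reduction to a DVR is wrong as stated. If you choose a curve $C\subset B$ through $b$ and pass to $\OO_{C,b}$, the fraction field of that DVR is the function field of $C$, which is the residue field of a \emph{specialization} of the generic point of $B$, not an extension of $K$; so ``base change cannot enlarge the collection of unirulings'' does not apply, and you have not shown that $u_1(\text{Frac}(R),\Xx_{\text{Frac}(R)})$ is prime to $p$ --- establishing that would require the specialization statement at the intermediate point $\eta_C$, which is the very thing being proved. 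The paper's device is the blowup: it takes $C=\ol{\{b\}}$ inside the characteristic-$p$ fiber, blows up $B_D$ along $C$, and localizes at the generic point of the exceptional divisor $E$; that local ring is a DVR whose fraction field is literally $K$ and whose residue field is a purely transcendental extension of $\kappa(b)$, and a rational section of $E\to C$ transports the conclusion back to $\kappa(b)$. Some such construction is indispensable whenever $b$ has codimension $>1$ in $B$.

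Second, you correctly identify as the ``main obstacle'' that the ruling need not survive on $Z_{i_0}$, but your resolution is not one: $p\nmid d_{i_0}$ gives separability of $Z_{i_0}\to \Xx_b$ and says nothing about $Z_{i_0}$ carrying a covering family of rational curves mapping separably to $\Xx_b$ (a merely uniruled $Z_{i_0}$ composed with a separable finite cover need not yield a \emph{separable} uniruling). Worse, by normalizing $\Xx_R$ in $\text{Frac}(\Gamma)$ you discard the morphism to $M$, so the genus-$0$ fibration exists on your $\mc{Y}_R$ only as a rational map with possible indeterminacy along the closed fiber. The paper instead preserves the fibration: it embeds $Y^o_K$ as a relative conic in $\PP^2$ via $\omega_\pi^\vee$ (your appeal to ``generic triviality of the $\PP^1$-bundle'' is also false, since a conic bundle need not be generically split), compactifies by Nagata, and blows up $M'_R$ so that the strict transform is $M_R$-flat; the tame component of the closed fiber is then swept out by components of genus-$0$ fibers, and the component over the generic point of the corresponding component of $M_{\fF}$ is a free curve. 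Finally, your opening divisibility $u_1(\kappa(b),\Xx_b)\mid u_1(K,\Xx)$ does not follow from exhibiting one good component per uniruling, since $d_{i_0}$ need not divide $d$; fortunately the theorem only needs one separable uniruling of the fiber, not that divisibility.
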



\subsection{Stability of the Tangent Bundle and Separable
  Rational Connectedness} \label{subsec-stab} \marpar{subsec-stab}

\mni
By the following theorem of the second author, separable uniruledness
implies free rational connectedness for manifolds with cyclic Picard
group, resp. it implies separable rational connectedness for manifolds
with cyclic Picard group and \emph{stable} tangent bundle.

\mni
For a smooth $\fF$-scheme $\Xx_\fF$, for every open subscheme
$U\subset \Xx_{\fF}$ whose complement has codimension $\geq 2$, the
pushforward of each locally free $\OO_U$-module of finite rank is a
torsion-free, coherent $\OO_{\Xx}$-module.  Such torsion-free,
coherent $\OO_{\Xx}$-modules are \textbf{reflexive}.

\mni
For a smooth, projective $\fF$-scheme $\Xx_{\fF}$ whose Picard group
is generated by an ample invertible sheaf $\OO_{\Xx}(1)$, for every
reflexive $\OO_{\Xx}$-module $\mc{E}$ of rank $r>0$, the exterior
power $\bigwedge^r_{\OO_{\Xx}}\mc{E}$ equals $\OO_{\Xx}(d)$ on an open
subset whose complement has codimension $\geq 2$.  The integer $d$ is
the \textbf{degree} of $\mc{E}$.  The \textbf{slope} is the fraction
$\mu(\mc{E}) = d/r$.  Finally, $\mc{E}$ is \textbf{stable},
resp. \textbf{semistable}, if $\mu(\mc{F})<\mu(\mc{E})$, resp. if
$\mu(\mc{F})\leq \mu(\mc{E})$, for every nonzero, reflexive
$\OO_{\Xx}$-submodule $\mc{F}$ of $\mc{E}$ with rank $<r$.

\begin{thm}\cite[Theorem 5, Corollary 9]{ZTstab} \label{thm-ZTstab}
  \marpar{thm-ZTstab} 
  Over every field $k=\ol{k}$, every smooth, projective
  $k$-variety $\Xx_k$ that is separably uniruled with cyclic
  Picard group is freely rationally connected.  It is separably
  rationally connected if $T_{\Xx_k}$ is stable.  In
  particular, a smooth complete intersection in $\PP^n_k$ is separably
  rationally connected if and only if it is separably uniruled.
\end{thm}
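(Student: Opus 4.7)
The plan is to construct very free rational curves from a single free rational curve (provided by separable uniruledness), using the cyclic Picard group to constrain the Harder--Narasimhan behavior of $T_{\Xx_k}$ along rational curves, and invoking stability of $T_{\Xx_k}$ to close the gap between ``free'' and ``very free''.

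For the first assertion (free rational connectedness), I would begin with a free rational curve $f:\PP^1_k\to\Xx_k$ and attach many general deformations of $f$ as teeth of a comb on a central free rational curve whose two marked endpoints are arbitrary general $k$-points. A comb-smoothing argument valid in arbitrary characteristic — available because $f^{*}T_{\Xx_k}$ is globally generated so comb deformations are unobstructed — then produces a single free rational curve through both prescribed points. Cyclicity of $\text{Pic}(\Xx_k)$ enters to rule out obstructions: any nontrivial MRC quotient of $\Xx_k$ would yield a proper saturated subsheaf of $T_{\Xx_k}$ whose determinant is incompatible with the unique ample generator of $\text{Pic}(\Xx_k)$ when tested against the covering family of free rational curves, so the MRC fibration must be trivial.

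For the second assertion, I would smooth a long comb of free rational curves to produce a free morphism $g:\PP^1_k\to\Xx_k$ of sufficiently high degree and, by Grothendieck's theorem, write $g^{*}T_{\Xx_k}=\bigoplus_i\OO(a_i)$ with all $a_i\geq 0$ and at least one $a_i>0$. The key observation is that if any $a_j=0$, then as $g$ varies in its covering family the trivial summands assemble into a nonzero proper saturated reflexive subsheaf $\mathcal{F}\subsetneq T_{\Xx_k}$ whose restriction to a general covering curve is the zero-degree part of $g^{*}T_{\Xx_k}$. Cyclicity of $\text{Pic}(\Xx_k)$ turns this curve-wise degree information into a slope inequality $\mu(\mathcal{F})\geq \mu(T_{\Xx_k})$, contradicting stability of $T_{\Xx_k}$. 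Hence every $a_i$ is positive, $g^{*}T_{\Xx_k}$ is ample, and $g$ is very free.

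The main obstacle is the passage from pointwise trivial summands to a globally defined destabilizing subsheaf of $T_{\Xx_k}$ with a controllable slope. This requires careful handling of the base change along the universal covering family, of the saturation of the resulting subsheaf without spoiling the degree computation, and of the comparison between curve-wise degrees and global slopes — precisely the point at which cyclicity of $\text{Pic}(\Xx_k)$ is essential, because it converts a single curve-class degree vanishing into a slope statement with respect to the ample generator. The final assertion about smooth complete intersections of dimension at least three in $\PP^n_k$ follows because both hypotheses are classical: Grothendieck--Lefschetz gives cyclicity of $\text{Pic}(\Xx_k)$, and standard stability theorems for the tangent bundle of smooth Fano complete intersections supply the stability hypothesis, so separable uniruledness and separable rational connectedness coincide in this setting.
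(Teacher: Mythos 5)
First, a point of comparison: the paper does not prove this statement at all --- Theorem \ref{thm-ZTstab} is imported verbatim from \cite{ZTstab} (``Theorem 5, Corollary 9''), so there is no internal proof to measure your proposal against. Your broad strategy (analyze the splitting type of $T_{\Xx_k}$ along a covering family of free rational curves and use cyclicity of $\text{Pic}(\Xx_k)$ to convert curve-wise degrees into slopes) is indeed the strategy of the cited reference, but two steps are genuinely wrong. In the stability part, the destabilizing object cannot be the degree-zero part of $g^{*}T_{\Xx_k}$. A free curve has $\deg g^{*}T_{\Xx_k}>0$, so with cyclic Picard group $\mu(T_{\Xx_k})>0$, whereas any sheaf restricting to the trivial summands on the covering curve has determinant of degree $0$ there and hence slope $0<\mu(T_{\Xx_k})$: your inequality $\mu(\mcF)\geq\mu(T_{\Xx_k})$ is backwards and produces no contradiction with stability. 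Moreover the trivial summands are canonically a \emph{quotient} of $g^{*}T_{\Xx_k}$, not a subsheaf, so they do not ``assemble'' into a saturated $\mcF\subset T_{\Xx_k}$. The working argument takes the \emph{positive} part: for a general minimal free curve $g^{*}T_{\Xx_k}\cong\OO(2)\oplus\OO(1)^{a}\oplus\OO^{n-1-a}$, the positive summands span a distribution of rank $1+a$ and curve-degree $2+a$, giving slope $(2+a)/(1+a)$ against $(2+a)/n$ for $T_{\Xx_k}$; a trivial summand means $1+a<n$ and the positive part destabilizes.

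The first assertion is circular as written: a ``central free rational curve whose two marked endpoints are arbitrary general $k$-points'' is precisely the datum whose existence defines free rational connectedness, so it cannot serve as the handle of your comb. The argument in \cite{ZTstab} instead runs through the free-RC quotient in the sense of \cite{Shen} (the MRC quotient is the wrong object in positive characteristic, since rational connectedness does not imply free rational connectedness there): free curves are tangent to the fibers of that quotient, so a nontrivial quotient yields a nonzero nef, non-big pulled-back class, which is impossible when $\text{Pic}(\Xx_k)$ is generated by an ample class. Finally, the complete-intersection clause does not follow from ``standard stability theorems'': stability of $T_{\Xx_k}$ for Fano manifolds with cyclic Picard group is open even in characteristic $0$ (see Problem \ref{prob-stab}), and the present paper proves it for specializations of complete intersections only under extra hypotheses (Theorem \ref{thm-stab}); the complete-intersection corollary in \cite{ZTstab} rests on a separate argument specific to that geometry rather than on quoting a stability theorem.
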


\begin{prob} \marpar{prob-stab} \label{prob-stab}
  In characteristic $0$, does every Fano manifold with cyclic Picard
  group have a stable tangent bundle?  Does this hold in positive
  characteristic?
\end{prob}

\mni
For an algebraically closed field $\ol{K}$, there is a general result
of Miles Reid for Fano manifolds $\Xx_{\ol{K}}$ with cyclic Picard
group whose Fano index equals $1$.

\begin{thm}\cite[Theorem 3]{ReidBog} \label{thm-Reid}
  \marpar{thm-Reid}
  For $\ol{K}$ of characteristic $0$ and $\Xx_{\ol{K}}$ as above, the
  tangent bundle is stable.
\end{thm}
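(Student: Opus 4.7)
Let $n = \dim \Xx_{\ol{K}}$, and let $H$ denote the ample generator of $\text{Pic}(\Xx_{\ol{K}}) \cong \ZZ$. The Fano index $1$ hypothesis gives $-K_{\Xx_{\ol{K}}} \sim H$, hence $\det T_{\Xx_{\ol{K}}} \cong \OO_{\Xx_{\ol{K}}}(1)$ and $\mu(T_{\Xx_{\ol{K}}}) = 1/n$. The plan is to split the argument into a numerical reduction from stability to semistability, then derive a contradiction from semistability via a foliation argument.

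For the reduction, let $\mc{F} \subset T_{\Xx_{\ol{K}}}$ be any saturated reflexive subsheaf of rank $r$ with $0 < r < n$, and write $\det \mc{F} \cong \OO_{\Xx_{\ol{K}}}(a)$ for some $a \in \ZZ$. The semistability inequality $\mu(\mc{F}) \leq \mu(T_{\Xx_{\ol{K}}})$ reads $an \leq r$; combined with $0 < r < n$ and the integrality of $a$, this forces $a \leq 0$, in which case $\mu(\mc{F}) \leq 0 < 1/n$ is strictly smaller than $\mu(T_{\Xx_{\ol{K}}})$. Thus in our setting stability coincides with semistability, and it suffices to rule out a saturated destabilizing $\mc{F}$ with $a \geq 1$.

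Suppose for contradiction that such $\mc{F}$ exists, and replace it by the first step of the Harder--Narasimhan filtration of $T_{\Xx_{\ol{K}}}$ with respect to $H$; then $\mc{F}$ is saturated, $H$-semistable, and $\mu(\mc{F}) > 0$. By Miyaoka's theorem, valid in characteristic zero, the maximal destabilizing subsheaf of $T_{\Xx_{\ol{K}}}$ is closed under the Lie bracket, so $\mc{F}$ is a (possibly singular) holomorphic foliation on $\Xx_{\ol{K}}$. Since $\mu(\mc{F}) > 0$, the Bogomolov--McQuillan algebraic integrability theorem, in the form strengthened by Campana--Paun, implies that the leaves of $\mc{F}$ are algebraic and rationally connected. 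Consequently $\mc{F}$ arises as the relative tangent sheaf of a dominant rational fibration $\pi: \Xx_{\ol{K}} \dashrightarrow Y$ with $\dim Y = n - r \geq 1$ and rationally connected general fiber $F$ of dimension $r \geq 1$.

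The final contradiction comes from the cyclic Picard group. Pick an ample Cartier divisor $D$ on a projective model of $Y$; after resolving the indeterminacy of $\pi$ and pushing forward, the class of $\pi^{*}D$ on $\Xx_{\ol{K}}$ equals $bH$ for some integer $b \geq 0$, since $\text{Pic}(\Xx_{\ol{K}}) = \ZZ \cdot H$. Restricted to a general compact fiber $F$, the class $\pi^{*}D|_{F}$ is pulled back along a constant map and hence numerically trivial, whereas $H|_{F}$ is ample because $F$ is positive-dimensional. This forces $b = 0$, so $\pi^{*}D$ is numerically trivial for every ample $D$ on $Y$, contradicting the dominance of $\pi$. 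The main obstacle is the foliation-theoretic input in the previous paragraph: Miyaoka's theorem together with Bogomolov--McQuillan--Campana--Paun, both of which use characteristic zero in an essential way. Reid's original argument in \cite{ReidBog} predates that machinery and instead deduces semistability by restricting to a general complete-intersection surface $S \subset \Xx_{\ol{K}}$ and invoking the Bogomolov Chern-class inequality, an equally valid route that is closer in spirit to the title of that paper.
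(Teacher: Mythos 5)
Your argument is correct in characteristic $0$, but it is a genuinely different and much heavier route than the one the paper takes (the paper reproduces Reid's argument at the end of Section \ref{sec-stab}, since it needs a version that survives reduction mod $p$). The paper works with $\Omega^1_{\Xx}$ rather than $T_{\Xx}$: for a reflexive subsheaf $\mc{F}\subset\Omega^1_{\Xx}$ of rank $q<n$, the line bundle $\bigwedge^q\mc{F}\cong\OO_{\Xx}(e)$ injects into $\Omega^q_{\Xx}$, so $e\geq 0$ would force $H^0(\Xx,\Omega^q_{\Xx})\neq 0$; since $h^{q,0}=0$ for a Fano manifold in characteristic $0$ (and, in the paper's mixed-characteristic setting, by decomposition of the diagonal or crystalline arguments), one gets $e\leq -1$, hence $\mu(\mc{F})\leq -1/q<-1/n=\mu(\Omega^1_{\Xx})$, and stability of $\Omega^1_{\Xx}$, hence of $T_{\Xx}$, follows. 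Note that this exploits index $1$ plus integrality in exactly the same way as your first paragraph (your reduction of stability to semistability via $an\leq r<n\Rightarrow a\leq 0$ is the dual of the paper's $e\leq-1\Rightarrow \mu\leq -1/q$), so your opening observation is essentially half of the intended proof already. Where you diverge is in ruling out a positive-slope destabilizer: you invoke Miyaoka's integrability of the maximal destabilizing subsheaf and Bogomolov--McQuillan--Campana--P\u{a}un to produce a rational fibration with rationally connected fibers, then contradict cyclic Picard group. That chain of implications is valid, but each link (semistability of tensor products, generic semipositivity/foliation integrability, algebraicity of leaves) is specific to characteristic $0$, whereas the $h^{q,0}$-vanishing argument is elementary and is precisely what the authors port to characteristic $p$ in Theorem \ref{thm-stab}. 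Your closing speculation that Reid's original proof proceeds via Bogomolov's inequality on a general complete-intersection surface does not match the argument the paper attributes to him, which is the $H^0(\Omega^q)=0$ computation above; in any case that aside carries no weight in the proof.
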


\mni
We prove two cases of this for specializations to positive
characteristic.  The first case uses ``prime-to-$p$ decompositions
of the diagonal'' and results of Totaro and Gounelas-Javanpeykar on de
Rham cohomology and Picard groups for specializations of Fano
manifolds, \cite{Totaro}, \cite{GouJav}.  This version applies in the
ramified case and bounds the ``bad primes'' as divisors of
$\text{Tor}(\ol{K},\Xx_{\ol{K}})\cdot i_1(\ol{K},\Xx_{\ol{K}}).$

\mni
The second case uses the method of Kato, Fontaine-Messing, and
Deligne-Illusie on degenerations of the Hodge-de Rham spectral
sequence (Fr\"{o}hlicher spectral sequence) and crystalline
cohomology, \cite{DeligneIllusie}.  This version applies only in the
unramified case, it identifies the ``bad primes'' as all primes
$p\leq \text{dim}(\Xx_{\ol{K}})$ and all prime orders of torsion
elements of $H^*(\Xx_{\ol{K}})$, and it includes a hypothesis of no
``extra'' torsion in crystalline cohomology (it is unknown whether
such specializations of Fano manifolds have ``extra'' torsion).

\mni
Let $R$ be a complete DVR whose fraction field $K$ has characteristic
$0$ and whose residue field $k$ is perfect of characteristic $p$
(e.g., the Witt vectors $W(k)$ of the residue field).  Let $\Xx_R$ be
a smooth, projective $R$-scheme of relative dimension $n$ with
connected fibers.  There is an associated specialization map of Picard
groups of geometric fibers,
$$
\text{spec}_{\Xx_R/R}:\text{Pic}(\Xx_{\ol{K}}) \to \text{Pic}(\Xx_{\ol{k}}).
$$

\begin{thm}[Stability of Tangent Bundles in Mixed
  Characteristic] \label{thm-stab} \marpar{thm-stab} 
  Assume that the geometric generic fiber $\Xx_{\ol{K}}$ has effective
  first Chern class generating $\text{Pic}(\Xx_{\ol{K}})$.  The
  geometric closed fiber has stable tangent bundle and
  $\text{spec}_{\Xx_R/R}$ is surjective if $p$ is prime to
  $\text{Tor}(\ol{K},\Xx_{\ol{K}})$ and $\text{spec}_{\Xx_R/R}$ is
  surjective, e.g., if $p$ is prime to
  $\text{Tor}(\ol{K},\Xx_{\ol{K}})\cdot i_1(\ol{K},\Xx_{\ol{K}})$.  In
  this case, $H^0(\Xx_k,\Omega^r_{\Xx_k/k})$ vanishes for all $r>0$.

  \noindent
  Also the geometric closed fiber has stable tangent bundle and
  $\text{spec}_{\Xx_R/R}$ is surjective if $p$ is larger than the
  dimension $n$, if $R$ equals the Witt vectors $W(k)$, and if the
  crystalline cohomology $W(k)$-module $H^*(\Xx/W(k),W(k))$ is
  $p$-torsion-free.  In this case, both
  $H^0(\Xx_k,\Omega^r_{\Xx_k/k})$ and $H^r(\Xx_k,\OO_{\Xx_k})$ vanish
  for all $r>0$, the components of $\text{Pic}_{\Xx_R/R}$ are finite,
  \'{e}tale $R$-schemes, and all deformations of $\Xx_k$ are
  unobstructed.
\end{thm}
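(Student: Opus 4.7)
The plan is to transfer stability of $T_{\Xx_{\ol{K}}}$, known in characteristic zero by Reid's Theorem~\ref{thm-Reid} (the hypothesis that an effective first Chern class generates $\text{Pic}(\Xx_{\ol{K}})$ forces Fano index $1$ with cyclic Picard), down to the closed fiber by first establishing Hodge-type vanishings on $\Xx_k$ and then running Reid's argument in positive characteristic. The two parts differ only in how the vanishings $H^0(\Xx_k,\Omega^r) = 0$ (and, in Part~2, also $H^r(\Xx_k,\OO) = 0$) are obtained.

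For the first route, I would apply a Bloch--Srinivas style decomposition of the diagonal: since $\text{Tor}(\ol{K},\Xx_{\ol{K}})$ is prime to $p$, one writes in $\text{CH}^n(\Xx_{\ol{K}}\times\Xx_{\ol{K}})$ a decomposition $N\Delta \equiv N_1(\Xx\times\{\text{pt}\}) + Z$ modulo $p$, with $Z$ supported on $D\times\Xx$ for some divisor $D$ and $\gcd(N,p) = 1$. Spreading this over $R$ and specializing to the closed fiber, the correspondence acts on $H^0(\Xx_k,\Omega^r)$ for $r>0$ with the constant-term piece killing sections and the boundary piece factoring through $H^0(D,\Omega^r)$, so Totaro's inductive argument on dimension yields $H^0(\Xx_k,\Omega^r) = 0$. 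Surjectivity of $\text{spec}_{\Xx_R/R}$---either taken as a hypothesis or derived from $p$ prime to $i_1(\ol{K},\Xx_{\ol{K}})$ via Gounelas--Javanpeykar---ensures that the polarization and Fano index transport cleanly across the specialization. Reid's argument, which deduces stability for a Fano with cyclic Picard of index~$1$ from vanishings of $H^0(\Omega^r)$, then applies verbatim on $\Xx_k$.

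For the second route, I would apply Deligne--Illusie's theorem to the $W(k)$-lift $\Xx_R$: under $p>n$ and $p$-torsion-freeness of $H^*(\Xx/W(k),W(k))$, the Hodge-to-de Rham spectral sequence of $\Xx_k$ degenerates at $E_1$, so the Hodge numbers of $\Xx_k$ equal those of $\Xx_{\ol{K}}$; Kodaira vanishing for the Fano generic fiber then yields $H^r(\OO) = H^0(\Omega^r) = 0$ for $r>0$ on $\Xx_k$. The vanishing $H^1(\OO_{\Xx_k}) = 0$ forces the identity component of $\text{Pic}_{\Xx_R/R}$ to be trivial and each component \'{e}tale over $R$, hence finite \'{e}tale; lifting line bundles along this \'{e}tale cover gives surjectivity of $\text{spec}_{\Xx_R/R}$. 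Stability is again obtained by Reid's argument, and unobstructedness of deformations follows from the vanishing $H^2(\Xx_k,T_{\Xx_k}) \cong H^{n-2}(\Xx_k,\Omega^1\otimes\omega_{\Xx_k})^\vee = 0$ by Akizuki--Kodaira--Nakano in positive characteristic, itself valid under Deligne--Illusie's hypotheses with $p>n$.

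The main obstacle is adapting Reid's original characteristic-zero stability argument---which relies on Miyaoka--Yau--Bogomolov type inequalities---to positive characteristic, where one must substitute direct cohomological arguments using only the Hodge vanishings above and the cyclicity of Picard (possibly aided by Langer's positive-characteristic Bogomolov inequality). A secondary technical issue in Part~1 is the integral lifting of the diagonal decomposition from the geometric generic fiber to $\Xx_R$ without spoiling the prime-to-$p$ denominators, requiring careful spreading out over a finite level and judicious clearing of denominators before specialization. In Part~2, the no-extra-crystalline-torsion hypothesis is genuinely essential: without it the obstruction theory on $\Xx_k$ can carry classes invisible in characteristic zero.
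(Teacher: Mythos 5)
Your proposal follows essentially the same route as the paper: a prime-to-$p$ decomposition of the diagonal plus Totaro's specialization argument (resp.\ Deligne--Illusie degeneration under the crystalline torsion-freeness hypothesis) to get the vanishing of $H^0(\Xx_k,\Omega^r_{\Xx_k/k})$, Gounelas--Javanpeykar for surjectivity of the Picard specialization and preservation of the index, Reid's argument for stability, and Raynaud's vanishing theorem plus Serre duality for unobstructedness. One reassurance on your flagged ``main obstacle'': the part of Reid's argument actually needed here uses no Bogomolov-type inequality --- a reflexive $\mc{F}\subset\Omega^1_{\Xx_k/k}$ of rank $q$ with determinant $\OO(eH)$ and $e\geq 0$ would produce a nonzero section of $\Omega^q_{\Xx_k/k}$, so the vanishings you establish force $e<0$ and hence $\mu(\mc{F})\leq -1/q < -1/n = \mu(\Omega^1_{\Xx_k/k})$, an argument that works verbatim in characteristic $p$ once the Picard group is known to be generated by the effective class $c_1(T_{\Xx_k/k})$.
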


\begin{cor}[Weak Approximation for Picard Rank $1$ and Fano Index $1$] \label{cor-Y2} \marpar{cor-Y2}
  In the two cases above, if $p$ is prime to
  $D=\text{Tor}(\ol{K},\Xx_{\ol{K}})\cdot
  i_1(\ol{K},\Xx_{\ol{K}})\cdot u_1(\ol{K},\Xx_{\ol{K}})$, resp. prime to
  $D=(n!)u_1(K,\Xx)$, then the fiber $\Xx_k$ is separably rationally connected.
  For every smooth, affine, connected
  $\ol{k}$-curve $B$, for every flat, projective $B$-scheme,
  weak
  approximation holds at all points of $B$ of (strong) potentially
  good reduction where the (base change) smooth fiber is
  isomorphic to such $\Xx_{\ol{k}}$.
\end{cor}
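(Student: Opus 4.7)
The plan is to chain four previously stated results: Theorem \ref{thm-stab}, Theorem \ref{thm-Y2}, Theorem \ref{thm-ZTstab}, and Theorem \ref{thm-TZWA2}. Throughout, $\Xx_R/R$ is as in Theorem \ref{thm-stab}, so the geometric generic fiber $\Xx_{\ol{K}}$ has cyclic Picard group generated by the effective class $c_1(T_{\Xx_{\ol{K}}})$; that is, $\Xx_{\ol{K}}$ is Fano of Picard rank $1$ and Fano index $1$, hence uniruled by Mori's theorem, so that $u_1(K,\Xx)$ and $u_1(\ol{K},\Xx_{\ol{K}})$ are positive integers.

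First I would apply Theorem \ref{thm-stab} to produce, simultaneously, stability of $T_{\Xx_{\ol{k}}}$ and surjectivity of the Picard specialization $\text{spec}_{\Xx_R/R}$. In case 1 the divisibility $p\nmid \text{Tor}(\ol{K},\Xx_{\ol{K}})\cdot i_1(\ol{K},\Xx_{\ol{K}})$ is exactly the ``e.g.'' hypothesis of the first half of Theorem \ref{thm-stab}; in case 2 the divisibility $p\nmid n!$ forces $p>n$, which together with the ``case~2'' standing hypotheses $R=W(k)$ and $p$-torsion-freeness of crystalline cohomology invokes the second half of Theorem \ref{thm-stab}. Since a surjective image of a cyclic group is cyclic, in either case $\text{Pic}(\Xx_{\ol{k}})$ is cyclic.

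Next I would invoke Theorem \ref{thm-Y2} to pass separable uniruledness from the (uniruled) generic fiber to the closed fiber. After spreading $\Xx_R/R$ out to a smooth projective morphism over a connected, smooth, quasi-projective $\ZZ$-base $B$ containing the image of $\SP R$, the hypothesis that $p$ is prime to $u_1(K,\Xx)$ (automatic in case 2, and deduced from the prime-to-$u_1(\ol{K},\Xx_{\ol{K}})$ condition in case 1 after noting $u_1(\ol{K},\Xx_{\ol{K}})\mid u_1(K,\Xx)$) places the closed point of $\SP R$ in $B_D$ for the relevant $D$. Theorem \ref{thm-Y2} then gives separable uniruledness of $\Xx_{\ol{k}}$. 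Combining the three ingredients \emph{cyclic Picard}, \emph{stable tangent bundle}, and \emph{separable uniruledness} via Theorem \ref{thm-ZTstab} yields separable rational connectedness of $\Xx_{\ol{k}}$, establishing the first assertion. The second assertion is then immediate from Theorem \ref{thm-TZWA2}, which gives weak approximation at every place of (strong) potentially good reduction at which the base-changed smooth fiber is separably rationally connected.

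The main obstacle is not a deep calculation but the arithmetic bookkeeping: one must verify that the $D$ in each case of the corollary is divisible by every ``bad integer'' appearing in the hypotheses of Theorems \ref{thm-stab}, \ref{thm-Y2}, and \ref{thm-ZTstab}, and in particular that surjectivity of the Picard specialization correctly transports the cyclic-Picard hypothesis of Theorem \ref{thm-ZTstab} from generic to closed fiber. Once this compatibility of divisibilities is in hand, the four theorems chain together mechanically and nothing beyond direct citation is needed.
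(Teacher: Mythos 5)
Your overall architecture is exactly the paper's: apply Theorem \ref{thm-stab} to get stability of $T_{\Xx_{\ol{k}}}$ together with surjectivity of $\text{spec}_{\Xx_R/R}$ (hence cyclic Picard group of the closed fiber), get separable uniruledness of the closed fiber by the argument of Theorem \ref{thm-Y2}, combine via Theorem \ref{thm-ZTstab} to conclude separable rational connectedness, and finish with Theorem \ref{thm-TZWA2}. So the route is the same, and the "arithmetic bookkeeping" framing is accurate.

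One step in your bookkeeping is, however, logically backwards. You want $p$ prime to $u_1(K,\Xx)$ in order to quote Theorem \ref{thm-Y2} with $D=u_1(K,\Xx)$, and in case 1 you claim to deduce this from $p\nmid u_1(\ol{K},\Xx_{\ol{K}})$ ``after noting $u_1(\ol{K},\Xx_{\ol{K}})\mid u_1(K,\Xx)$.'' The divisibility $u_1(\ol{K},\Xx_{\ol{K}})\mid u_1(K,\Xx)$ is correct (there are more unirulings over $\ol{K}$, so the gcd can only drop), but from $a\mid b$ and $p\nmid a$ one cannot conclude $p\nmid b$ (take $a=1$, $b=p$). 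The correct repair is to argue directly: since $u_1(\ol{K},\Xx_{\ol{K}})$ is prime to $p$, at least one $1$-uniruling of $\Xx_{\ol{K}}$ has degree prime to $p$; it is defined over a finite extension $K'/K$, so after replacing $R$ by a DVR dominating it in $K'$ (which changes neither the geometric generic nor the geometric closed fiber) you may run the argument of the proof of Theorem \ref{thm-Y2} --- really Corollary \ref{cor-freeline} applied to the closure of that uniruling over $R$ --- to produce free rational curves in $\Xx_{\ol{k}}$. This is in effect what the paper does by citing ``the previous proof'' rather than the statement of Theorem \ref{thm-Y2}; note also that invoking the statement verbatim requires spreading out to a base of finite type over $\SP\ZZ$, which is an extra (standard but nontrivial) reduction when $R=W(k)$ for $k$ not finite. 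With that one inference corrected, the rest of your chain goes through as written.
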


\begin{rmk} \label{rmk-Y2} \marpar{rmk-Y2}
  In the second case, since deformations of $\Xx_k$ are unobstructed,
  existence of at least one $B$-section follows from deformation to
  characteristic $0$ and \cite{GHS}.  However, weak approximation
  certainly does not follow in that way.  Moreover, in the first case,
  since the deformations may be obstructed, even existence of one
  $B$-section requires separable rational connectedness, \cite{dJS}.
\end{rmk}
 
\mni
In principle, this corollary applies to index-$1$ Fano complete
intersections in $\PP^n$, in weighted projective spaces, in
Grassmannians, and in other projective homogeneous varieties with
cyclic Picard group.  However, bounding the integer $u_1(\fF,\Xx)$
seems difficult.  In fact, it seems difficult to explicitly compute
the genus-$0$, Gromov-Witten invariant
$\langle \tau_{e-2}(\eta_{\Xx})\rangle_{0,e}$ associated to free
curves of minimal $c_1(T_{\Xx})$-degree $e\geq 2$; this Gromov-Witten
invariant is an integer, and $u_1(\fF,\Xx)$ is a divisor of this
integer.

\mni
For Fano hypersurfaces of Fano index $1$, the minimal degree $e$
equals $2$, and the Gromov-Witten invariant $\langle \eta_{\Xx}
\rangle_{0,2}$ has been computed, \cite{BH}.
Recall that
the \textbf{Catalan number} is defined by
$$
C_d = \frac{1}{d+1}\binom{2d}{d}.
$$

\begin{thm}[Weak Approximation for Index 1 Fano
  Hypersurfaces] \label{thm-index1} \marpar{thm-index1} 
  For every integer $d\geq 4$, for the unique positive integer $n=d$
  such that the Fano index equals $1$, smooth, degree-$d$
  hypersurfaces in $\PP^n_k$ are separably uniruled by conics and
  separably rationally connected if $p>d$ and $p$ does not divide
  $(d+1)C_d-2^d$.  Also, Theorem \ref{thm-WA} holds in this case.
\end{thm}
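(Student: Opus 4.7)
The plan is to deduce Theorem \ref{thm-index1} from the first case of Corollary \ref{cor-Y2} by bounding the numerical invariants of a smooth, Fano-index-$1$, degree-$d$ hypersurface. Starting from $\Xx_k \subset \PP^d_k$, I would first choose a smooth lift $\Xx_R \subset \PP^d_R$ over $R = W(k)$ (automatic, since the Hilbert scheme of smooth degree-$d$ hypersurfaces in $\PP^d$ is smooth and surjective over $\SP \ZZ$). The geometric generic fiber $\Xx_{\ol{K}}$ is then a smooth degree-$d$ hypersurface in $\PP^d$ of Fano index $1$ in characteristic $0$, with cyclic Picard group $\ZZ \cdot \eta_{\Xx}$, effective ample generator $\eta_{\Xx}$, and specialization map $\text{spec}_{\Xx_R/R}$ automatically surjective by Lefschetz.

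Next, I would compute the three invariants $\text{Tor}(\ol{K}, \Xx_{\ol{K}})$, $i_1(\ol{K}, \Xx_{\ol{K}})$, $u_1(\ol{K}, \Xx_{\ol{K}})$ over characteristic $0$. By Remark \ref{rmk-D}, the torsion order equals $d!$, which is prime to $p$ whenever $p > d$. The index $i_1$ equals $1$ via lines (which exist on $\Xx_{\ol{K}}$ for $d \geq 4$ since $d \leq 2d-3$, by standard Fano-scheme-of-lines nonemptiness) or at worst divides $2$ via conics, and thus is prime to any $p \geq 5$. The key input is the divisibility $u_1 \mid \langle \eta_{\Xx} \rangle_{0,2}$ noted in the paragraph preceding the theorem statement, combined with the explicit formula $\langle \eta_{\Xx} \rangle_{0,2} = (d+1)C_d - 2^d$ from \cite{BH}. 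Under the hypotheses $p > d$ and $p \nmid (d+1)C_d - 2^d$, these bounds give $p \nmid D := \text{Tor}(\ol{K},\Xx_{\ol{K}}) \cdot i_1(\ol{K},\Xx_{\ol{K}}) \cdot u_1(\ol{K},\Xx_{\ol{K}})$.

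Applying the first case of Corollary \ref{cor-Y2} then yields that $\Xx_k$ is separably rationally connected and that weak approximation (in the sense of Theorem \ref{thm-WA}) holds at every place of (strong) potentially good reduction whose smooth fiber is such a hypersurface. For the refinement that $\Xx_k$ is separably uniruled \emph{by conics}, I would specialize the characteristic-$0$ $1$-uniruling of $\Xx_{\ol{K}}$ by conics (produced by the $\langle \eta_{\Xx}\rangle_{0,2}$ data of \cite{BH}, with generic finiteness degree dividing $(d+1)C_d - 2^d$) across the family $\Xx_R / R$; since $p$ is prime to that degree, the restriction to $\Xx_k$ remains separable and dominant, producing free conics through a general point of $\Xx_k$.

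The principal obstacle is the Gromov--Witten input from \cite{BH}: both the identification $\langle \eta_{\Xx} \rangle_{0,2} = (d+1)C_d - 2^d$ and the justification of the divisibility $u_1(\ol{K},\Xx_{\ol{K}}) \mid \langle \eta_{\Xx} \rangle_{0,2}$ in the absence of a deformation-invariant enumerative count at index $1$. A secondary subtlety is the specialization of the conic uniruling from $\Xx_{\ol{K}}$ to $\Xx_k$ with preservation of separability, which is where the inequality $p > d$ is essential (and matches the general hypothesis of Theorem \ref{thm-WA}). Once the \cite{BH} input is in hand, the remainder is a numerical verification feeding into Corollary \ref{cor-Y2}, which already packages the rational-connectedness and weak-approximation machinery developed earlier in the paper.
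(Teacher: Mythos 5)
Your proposal follows essentially the same route as the paper: reduce to the first case of Corollary \ref{cor-Y2} by bounding $\text{Tor}(\ol{K},\Xx_{\ol{K}})$ by $d!$, observing that $i_1(\ol{K},\Xx_{\ol{K}})=1$ via lines, and controlling $u_1$ through the conic count of \cite{BH}. The one point you flag as the principal obstacle --- justifying the divisibility $u_1(\ol{K},\Xx_{\ol{K}})\mid \langle\eta_{\Xx}\rangle_{0,2}$ from a virtual invariant --- is handled in the paper without that step: for the generic index-$1$ hypersurface in characteristic $0$ the evaluation morphism $\text{ev}:\Kgnb{0,1}((\Xx_K/K,\OO(1)),2)\to\Xx_K$ is itself generically finite, \cite{BH} computes its actual generic degree to be $e=\frac{(d!)^2}{2^{d+1}}\left((d+1)C_d-2^d\right)$, and since this evaluation morphism furnishes a $1$-uniruling one gets $u_1\mid e$ directly; the extra factor $(d!)^2/2^{d+1}$ involves only the prime $2$ and primes $\leq d$, so the hypotheses $p>d$ and $p\nmid (d+1)C_d-2^d$ already give $p\nmid e$. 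The passage to free conics on $\Xx_k$ is then exactly Corollary \ref{cor-freeline}, run as in the proof of Theorem \ref{thm-main2} with pointed conics in place of pointed lines (with $p>d\geq 4$ in particular guaranteeing $p\neq 2$), and weak approximation follows from Theorem \ref{thm-TZWA2} as you say.
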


\mni
\textbf{Acknowledgments.}
We thank Shizhang Li for alerting us to the issue of torsion in the
crystalline cohomology, and we thank Bhargav Bhatt for further
discussion of this issue.  We thank Frank Gounelas and Ariyan
Javanpeykar for stimulating conversation about specializations of
Picard groups.  Z.T. is partially supported by the
program``Recruitment of global experts", and NSFC grants No. 11871155,
No. 11831013.  J.S. thanks the organizers of the Special Session on
Algebraic Geometry of the 2018 Joint International Meeting of the
Chinese Mathematical Society and the American Mathematical Society as
well as the organizers of the 2018 International Summer School on
Arithmetic Geometry where he worked with Z. T. and R. Z.


\section{Proof of Theorem \ref{thm-stab}} \label{sec-stab} 
\marpar{sec-stab}

\mni
The key step is a vanishing theorem.  As in the statement of the
theorem, let $\Xx_R$ be a smooth, projective scheme of relative
dimension $n$ over $R$, a complete mixed characteristic DVR.  Denote
by $K$, resp. by $k$, the fraction field, resp. the residue field.
Assume that $k$ is perfect of characteristic $p$.

\begin{prop}[Hodge Coniveau in Mixed Characteristic] \label{prop-stab}
    \marpar{prop-stab}
    Assume that the geometric generic fiber $\Xx_{\ol{K}}$ is
    rationally connected.  The geometric closed fiber has vanishing de
    Rham cohomology groups $H^0(\Xx_k,\Omega^r_{\Xx_k/k})$ for every
    $r>0$ if the characteristic $p$ is prime to the torsion order,
    $\text{Tor}(\ol{K},\Xx_{\ol{K}})$. If $p$ is larger than the
    dimension $n$, if $R$ equals the Witt vectors $W(k)$, and if the
    crystalline cohomology $W(k)$-module $H^*(\Xx/W(k),W(k))$ is
    $p$-torsion-free, then for every $r>0$, both
    $H^0(\Xx_k,\Omega^r_{\Xx_k/k})$ and $H^r(\Xx_k,\OO_{\Xx_k})$
    vanish.  
\end{prop}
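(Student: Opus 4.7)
For the first claim, the plan is a Bloch--Srinivas style decomposition of the diagonal, pushed from characteristic zero to characteristic $p$ via specialization. Since $\Xx_{\ol{K}}$ is rationally connected, the class in $\text{CH}_0(\Xx_{\ol{K}(\Xx)})$ of the generic point differs from the class of a fixed rational point by an element killed by $N:=\text{Tor}(\ol{K},\Xx_{\ol{K}})$. Clearing denominators and spreading the generic point out over $\Xx_{\ol{K}}$ yields a decomposition
$$N\cdot[\Delta_{\Xx_{\ol{K}}}] \;=\; [\Xx_{\ol{K}}\times\{x_0\}] \;+\; [Z]$$
in $\text{CH}^n(\Xx_{\ol{K}}\times_{\ol{K}}\Xx_{\ol{K}})$, where $Z$ is supported on $D\times\Xx_{\ol{K}}$ for some divisor $D\subset\Xx_{\ol{K}}$. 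After passing to a sufficiently large finite extension $R'/R$ so that $x_0$ and the components of $Z$ are defined over $R'$, specialization of algebraic cycles along the smooth proper morphism $\Xx_{R'}\times_{R'}\Xx_{R'}\to \SP R'$ delivers an analogous identity
$$N\cdot[\Delta_{\Xx_{\ol{k}}}] \;=\; [\Xx_{\ol{k}}\times\{y_0\}] \;+\; [Z']$$
in $\text{CH}^n(\Xx_{\ol{k}}\times_{\ol{k}}\Xx_{\ol{k}})$. I then view this as an identity of correspondences acting on $H^0(\Xx_{\ol{k}},\Omega^r_{\Xx_{\ol{k}}/\ol{k}})$ for $r>0$: the diagonal acts as the identity, the horizontal term $\Xx_{\ol{k}}\times\{y_0\}$ kills all positive-degree forms since they pull back to zero along a constant map, and the contribution of $Z'$ factors through restriction to a resolution $\wt{D}\to D$, which can be handled by the standard Bloch--Srinivas/Chatzistamatiou--Levine machinery (possibly by induction on dimension). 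Since $N$ is invertible in $\ol{k}$ by hypothesis, the identity forces $H^0(\Xx_{\ol{k}},\Omega^r_{\Xx_{\ol{k}}/\ol{k}})=0$ for all $r>0$.

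For the second claim, I combine Deligne--Illusie's degeneration theorem with the Berthelot--Ogus crystalline--de Rham comparison and upper semicontinuity of coherent cohomology. Since $R=W(k)$, the reduction $\Xx_R\otimes_R W_2(k)$ furnishes a lift of $\Xx_k$ to $W_2(k)$, and the hypothesis $p>n=\dim\Xx_k$ places us in Deligne--Illusie's range, giving $E_1$-degeneration of the Hodge-to-de Rham spectral sequence for $\Xx_k/k$. The $p$-torsion-freeness of $H^*(\Xx/W(k),W(k))$, combined with the universal coefficient theorem and the Berthelot--Ogus isomorphisms
$$H^n_{\text{cris}}(\Xx_k/W(k))\otimes_{W(k)}k \;\cong\; H^n_{\text{dR}}(\Xx_k/k), \qquad H^n_{\text{cris}}(\Xx_k/W(k))\otimes_{W(k)}K \;\cong\; H^n_{\text{dR}}(\Xx_K/K),$$
yields $\dim_k H^n_{\text{dR}}(\Xx_k/k)=\dim_K H^n_{\text{dR}}(\Xx_K/K)$ for every $n$. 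Because Hodge-to-de Rham degenerates in both characteristic zero (classical) and at the closed fiber (Deligne--Illusie), this forces
$$\sum_{a+b=n}\dim_k H^b(\Xx_k,\Omega^a_{\Xx_k/k}) \;=\; \sum_{a+b=n}\dim_K H^b(\Xx_K,\Omega^a_{\Xx_K/K}).$$
Upper semicontinuity of coherent cohomology then forces termwise equality of Hodge numbers. Finally, since $\Xx_{\ol{K}}$ is rationally connected, classical Hodge theory in characteristic zero gives $h^{r,0}=h^{0,r}=0$ for $r>0$, and these vanishings transfer to the closed fiber.

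The hard part will lie in the first claim: (i) spreading the diagonal decomposition from the geometric generic fiber back to the integral model $\Xx_{R'}$ in a manner compatible with specialization of cycles requires cycle-theoretic care, and (ii) confirming that the residual correspondence $Z'$ annihilates $H^0(\Omega^r)$ reduces to understanding the Hodge realization of algebraic correspondences supported in lower-dimensional subvarieties, which is the heart of the Bloch--Srinivas method and typically requires iterated resolution of singularities or a more categorical framework for the action on coherent cohomology. In the second claim, the technical content is largely packaged into Deligne--Illusie and Berthelot--Ogus; the only delicate checkpoint is ensuring that the dimension bound $p>n$ controls all spectral sequence differentials relevant to degeneration.
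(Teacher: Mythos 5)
Your proposal is correct and follows essentially the same route as the paper: a torsion-order decomposition of the diagonal specialized from the generic to the closed fiber, followed by the action of correspondences on $H^0(\Xx_k,\Omega^r_{\Xx_k/k})$ for the first claim, and Deligne--Illusie degeneration plus the crystalline--de Rham comparison and semicontinuity for the second. The only difference is bookkeeping: the paper packages your ``hard part'' into Totaro's Lemmas 2.2 and 2.4 (specialization of the torsion order over a DVR, and the implication from a prime-to-$p$ torsion order to vanishing of $H^0(\Omega^r)$), which rest on exactly the Chatzistamatiou--Levine correspondence action you cite and therefore do not require the resolution of singularities you flag as a concern.
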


\begin{proof}
  First assume that $p$ is prime to the torsion order.  Up to making
  the DVR bigger, assume that $k$ is algebraically closed, assume that
  there exists a $K$-point $p$ of $\Xx_K$.  Consider the $n$-cycle
  $\gamma_p := [\Delta_{\Xx}]-[\Xx\times\{p\}] \in
  \text{CH}_n(\Xx_K\times \Xx_K)$.  For the natural map
  $\SP \ol{K}(\Xx)\to \Xx$, pullback of $\gamma_p$ by the first factor
  of $\Xx\times \Xx$ gives an element in the kernel of
  $$
  \text{deg}_{\ol{K}(\Xx),\Xx}:\text{CH}_0(\Xx_{\ol{K}(\Xx)}\to \ZZ.
  $$
  By hypothesis, this element has torsion order dividing
  $N=\text{Tor}(\ol{K},\Xx_{\ol{K}})$, and in fact these are equal.
  Thus, up to making $R$ bigger, assume that there exists a
  decomposition of the diagonal in $\text{CH}_n(\Xx_K\times \Xx_K),$
  $$
  N[\Delta_{\Xx}] = N[\Xx\times\{p\}] + Z_2,
  $$
  with $Z_2$ equal to the pushforward of a cycle class from
  $D\times \Xx_K$ for some divisor $D\subset \Xx_K$.  By \cite[Lemma
  2.4]{Totaro}, also the kernel of the degree map has torsion order
  prime to $p$,
  $$
  \text{deg}_{\ell,\Xx_\ell}:\text{CH}_0(\Xx_\ell)\to \ZZ
  $$
  for every extension field $\ell/k$.  Thus, by \cite[Lemma
  2.2]{Totaro}, for every integer $r>0$, the $k$-vector space
  $H^0(\Xx_k,\Omega^r_{\Xx_k/k})$ equals zero.

\mni
Next assume that $p>n$, assume that $R$ equals the Witt vectors
$W(k)$, and assume that the crystalline cohomology $W(k)$-module
$H^*(\Xx/W(k),W(k))$ is $p$-torsion-free, i.e., it is a finite, free
$W(k)$-module.  Since the geometric generic fiber $\Xx_{\ol{K}}$ is
Fano, it is separably rationally connected.  Thus, all of the
Dolbeault cohomology groups $h^{0,q}$ vanish, as do $h^{1,0}$ and
$h^{2,0}$.  Since $n<p$, and since the crystalline cohomology is
$p$-torsion-free, also the cohomology groups
$H^0(\Xx_{\fF},\Omega^q_{\Xx/\fF})$ vanish, as do
$H^1(\Xx_{\fF},\OO_{\Xx})$ and $H^2(\Xx_{\fF},\OO_{\Xx})$,
\cite{DeligneIllusie}. 
\end{proof}

\begin{lem} \label{lem-spec} \marpar{lem-spec}
  For a smooth, projective scheme $\Xx_R$ over $R$ with connected
  geometric fibers, a complete DVR with fraction field $K$ and residue
  field $k$, the relative Picard $R$-scheme is a countable disjoint
  union of finite, \'{e}tale $R$-schemes provided
  $H^1(\Xx_k,\OO_{\Xx_k})$ and $H^2(\Xx_k,\OO_{\Xx_k})$ vanish.  If
  $\text{char}(k)$ equals $p$ and if $H^0(\Xx_k,\Omega_{\Xx_k/k})$
  vanishes, then $\text{Pic}(\Xx_{\ol{k}})$ is $p$-torsion-free.
\end{lem}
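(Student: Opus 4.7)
The plan is to handle the two assertions independently: the first via classical representability together with the deformation theory of the Picard scheme, and the second by producing a nonzero global $1$-form out of a hypothetical nontrivial $p$-torsion line bundle.

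For the first assertion, I would invoke Grothendieck's representability theorem for $\text{Pic}_{\Xx_R/R}$, valid because $\Xx_R \to \SP R$ is projective and smooth with geometrically integral fibers; the output is a separated $R$-group scheme locally of finite presentation. At the identity section the relative tangent space is $R^1 f_\ast \OO_{\Xx_R}$ and the obstruction space is contained in $R^2 f_\ast \OO_{\Xx_R}$, where $f: \Xx_R \to \SP R$ is the structure morphism. Upper semicontinuity of cohomology for the proper flat family, combined with the hypothesized vanishing of $H^1$ and $H^2$ on the closed fiber, forces these higher direct images to vanish. Hence $\text{Pic}_{\Xx_R/R}$ is formally unramified and formally smooth at the identity, so it is \'{e}tale there; translation by the group law propagates \'{e}taleness to every point. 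Because $R$ is complete (hence henselian), an \'{e}tale $R$-scheme decomposes as a disjoint union of $\SP S_\alpha$ with each $S_\alpha$ a local finite unramified extension of $R$, and countability of the index set follows because components over $R$ biject with those on the closed fiber, which form a subgroup of the finitely generated N\'{e}ron--Severi group.

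For the second assertion I argue by contradiction. Suppose $L \in \text{Pic}(\Xx_{\ol k})$ is nontrivial with $L^{\otimes p} \cong \OO$. Choose a Zariski cover $\{U_i\}$ trivializing $L$, giving a transition cocycle $g_{ij} \in \OO(U_{ij})^\ast$; a trivialization of $L^{\otimes p}$ provides units $h_i \in \OO(U_i)^\ast$ with $g_{ij}^p = h_j/h_i$. The local $1$-forms $\omega_i := dh_i/h_i$ glue to a global section of $\Omega^1_{\Xx_{\ol k}/\ol k}$ because on overlaps
\[
\omega_j - \omega_i \;=\; d\log(h_j/h_i) \;=\; d\log(g_{ij}^p) \;=\; p\,d\log(g_{ij}) \;=\; 0
\]
in characteristic $p$. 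The hypothesis $H^0(\Xx_k, \Omega^1_{\Xx_k/k}) = 0$ propagates to $\Xx_{\ol k}$ by flat base change, so each $\omega_i$ vanishes, i.e., $dh_i = 0$. Since $\Xx_{\ol k}$ is smooth over the perfect field $\ol k$, such a unit is a $p$-th power in $\OO(U_i)^\ast$; write $h_i = u_i^p$. Then $(g_{ij} u_i/u_j)^p = 1$, and the identity $(v-1)^p = v^p - 1$ in characteristic $p$, combined with reducedness of $\OO(U_{ij})$, forces $g_{ij} u_i/u_j = 1$. Thus the cocycle $g_{ij}$ is a coboundary and $L \cong \OO$, contradicting nontriviality.

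The step I expect to require the most care is the Zariski (as opposed to fppf) extraction of $p$-th roots: after concluding $dh_i = 0$ one must produce $u_i \in \OO(U_i)^\ast$ with $h_i = u_i^p$ inside the units of the original open, so that the final cocycle identity takes place in a reduced ring. This uses the local decomposition of $\OO_{\Xx, x}$ as a free module over $\OO_{\Xx, x}^p$ with basis $\{t^\alpha\}_{0 \leq \alpha_j < p}$ for a regular system of parameters $t_1, \dots, t_n$, together with uniqueness of $p$-th roots in reduced rings to patch the local roots into a genuine unit on $U_i$.
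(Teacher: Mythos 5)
Your overall strategy matches the paper's on both halves, and the second half is essentially identical to the paper's argument: you form the logarithmic derivative of the local functions $f_U$ (your $h_i$) attached to a trivialization of $\mcL^{\otimes p}$, observe it is independent of choices because the ambiguity is a $p$-th power, conclude the resulting global $1$-form vanishes, and then extract $p$-th roots to trivialize the cocycle. The only difference is where the roots are extracted: the paper passes to an \'{e}tale cover, takes the unique $p$-th root there, and descends using uniqueness, whereas you extract the root Zariski-locally via the identification of $\ker(d)$ with $\OO_X^p$ on a smooth scheme over a perfect field, and use reducedness to force $(g_{ij}u_i/u_j)^p=1 \Rightarrow g_{ij}u_i/u_j=1$. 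Both are correct; your Zariski version avoids the descent step at the cost of the (standard) structure of $F_*\OO_X$ over $\OO_{X^{(p)}}$, which you correctly identify as the delicate point.

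On the first half, your deformation-theoretic argument ($R^1f_*\OO=R^2f_*\OO=0$ by semicontinuity, hence the Picard scheme is formally \'{e}tale at the identity and hence everywhere by translation) is the same as the paper's ``every invertible sheaf on the closed fiber deforms uniquely.'' However, your intermediate claim that \emph{any} \'{e}tale scheme over a henselian local ring decomposes as a disjoint union of spectra of finite local unramified extensions is false as stated: $\SP K$ is \'{e}tale over $\SP R$ and not finite, and the correct structure theorem only splits off a finite \'{e}tale part plus a part with empty special fiber. To rule out components supported over the generic point you need exactly what the paper invokes, namely that $\text{Pic}_{\Xx_R/R}$ satisfies the valuative criterion of properness --- concretely, every invertible sheaf on $\Xx_K$ extends to $\Xx_R$ because $\Xx_R$ is regular (take the closure of a representing divisor). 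Once that is added, your bijection between components over $R$ and components of the special fiber, and hence countability via the N\'{e}ron--Severi group (or the paper's indexing by Hilbert polynomials), goes through.
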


\begin{proof}
  If $H^r(\Xx_{\fF},\OO_{\Xx})$ vanishes for $r=1,2$, then every
  invertible sheaf on the geometric closed fiber deforms uniquely to
  an invertible sheaf over the geometric generic fiber by
  infinitesimal deformation theory.  Since $\Xx_R$ is proper and
  smooth over $R$, also the relative Picard scheme satisfies the
  valuative criterion of properness.  It is a countable increasing
  union of open and closed subschemes that are finite and \'{e}tale
  which are indexed by Hilbert polynomials.

\mni
For the next part, we explain the fragment of the theory of Cartier
isomorphisms that we need.  Assume that $k$ is algebraically closed of
characteristic $p$.  Let $\mcL$ be an invertible $\OO_{\Xx_k}$-module,
and let $s$ be an isomorphism of $\OO_{\Xx_k}$-modules,
$$
s:\mcL^{\otimes p} \to \OO_{\Xx_k}.
$$
Every point of $\Xx_k$ is contained in a Zariski open affine $U$ on
which there exists a trivializing isomorphism of $\OO_U$-modules,
$$
t_U:\OO_{U}\to \mcL_U.
$$
The composite $s\circ t_U^{\otimes p}$ is multiplication by a section
$f_U\in \Gm{m}(U)$.  Multiplying $t_U$ by $g_U\in \Gm{m}(U)$ modifies
$f_U$ to $f_Ug_U^p$.  In particular, the logarithmic derivative
$f_U^{-1}df_U\in \Omega_{\Xx_k/k}(U)$ is independent of the choice of
trivializations.  Thus, there exists a global section $\alpha$ of
$\Omega_{\Xx_k/k}$ such that for every $(U,t_U)$, the logarithmic
derivative $f_U^{-1}df_U$ equals $\alpha$.

\mni
If $\alpha$ equals $0$, then every $df_U$ equals $0$.  Then, \'{e}tale
locally, $f_U$ equals $g_U^p$ for some unique $g_U$.  Thus, there
exists an \'{e}tale cover $\Xx'_k\to \Xx_k$ and a unique
trivialization $t$ of $\mcL$ on $\Xx'_k$ such that
$s\circ t^{\otimes p}$ is the identity.  The uniqueness of the
trivialization, combined with \'{e}tale descent, implies that $t$
descends to a unique trivialization of $\mcL$ on $\Xx_k$ such that
$s\circ t^{\otimes p}$ is the identity.  In particular, if
$H^0(\Xx_k,\Omega_{\Xx_k/k})$ vanishes, then there is no $p$-torsion
in $\text{Pic}(\Xx_k)$.
\end{proof}

\mni
Finally, we recall some results of Gounelas and Javanpeykar.

\begin{thm}\cite[Theorem 1.2]{GouJav} \label{thm-GJ} \marpar{thm-GJ}
  With hypotheses as above, assume that $\Xx_{\ol{K}}$ is rationally
  connected.  Then for every prime $\ell\neq p$, after tensoring with
  $\ZZ_\ell$, the specialization map of Picard groups is an
  isomorphism.  Thus, the cokernel of the specialization map is a
  finite $p$-group.
\end{thm}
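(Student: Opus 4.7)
\mni
The plan is to embed both Picard groups into $\ell$-adic étale cohomology via the first Chern class and compare them using smooth proper base change.  The Kummer sequence
\[
0 \to \mu_{\ell^n} \to \mathbb{G}_m \xrightarrow{\ell^n} \mathbb{G}_m \to 0
\]
gives, after inverse limit over $n$, a natural embedding $\text{Pic}(\Xx) \otimes \ZZ_\ell \hookrightarrow H^2_\et(\Xx, \ZZ_\ell(1))$ whose cokernel is the $\ell$-adic Tate module $T_\ell \text{Br}(\Xx)$ of the Brauer group.  Rational connectedness of $\Xx_{\ol K}$ forces $H^i(\Xx_{\ol K}, \OO) = 0$ for $i>0$, so $\text{Pic}^0(\Xx_{\ol K}) = 0$, $\text{Pic}(\Xx_{\ol K})$ is finitely generated and coincides with $\text{NS}(\Xx_{\ol K})$, and $\text{Br}(\Xx_{\ol K})$ is finite; in particular $T_\ell \text{Br}(\Xx_{\ol K}) = 0$, giving an isomorphism
\[
c_1 : \text{Pic}(\Xx_{\ol K}) \otimes \ZZ_\ell \xrightarrow{\sim} H^2_\et(\Xx_{\ol K}, \ZZ_\ell(1)).
\]

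\mni
On the closed fiber, $\text{Pic}^0(\Xx_{\ol k})$ is the group of $\ol k$-points of an abelian variety over an algebraically closed field, hence divisible, so it vanishes after tensoring with $\ZZ_\ell$; thus $\text{Pic}(\Xx_{\ol k}) \otimes \ZZ_\ell = \text{NS}(\Xx_{\ol k}) \otimes \ZZ_\ell$, a finitely generated $\ZZ_\ell$-module that injects into $H^2_\et(\Xx_{\ol k}, \ZZ_\ell(1))$.  For $\ell \neq p$, smooth proper base change for $\Xx_R \to \SP R$ identifies $H^2_\et(\Xx_{\ol K}, \ZZ_\ell(1))$ with $H^2_\et(\Xx_{\ol k}, \ZZ_\ell(1))$ compatibly with first Chern classes of line bundles and with the Picard specialization map.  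Therefore the composition
\[
\text{Pic}(\Xx_{\ol K}) \otimes \ZZ_\ell \xrightarrow{\text{spec}\otimes 1} \text{Pic}(\Xx_{\ol k}) \otimes \ZZ_\ell \xrightarrow{c_1} H^2_\et(\Xx_{\ol k}, \ZZ_\ell(1))
\]
equals the generic cycle class map followed by base change, and is thus an isomorphism.  Injectivity of the right map forces $\text{spec}\otimes 1$ to be injective, and the resulting equality of images inside $H^2_\et(\Xx_{\ol k}, \ZZ_\ell(1))$ forces $\text{NS}(\Xx_{\ol k})\otimes \ZZ_\ell$ to surject onto that cohomology group, giving surjectivity of $\text{spec}\otimes 1$ as well.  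Running over all $\ell \neq p$ and using finite generation of both Picard groups, the cokernel of the integral specialization map is a finite $p$-group.

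\mni
The main obstacle is controlling $T_\ell \text{Br}(\Xx_{\ol k})$: a priori, nontrivial $\ell$-divisible classes in $\text{Br}(\Xx_{\ol k})$ could leave room for transcendental $\ell$-adic classes in $H^2_\et(\Xx_{\ol k}, \ZZ_\ell(1))$ that the Chern class map would not hit, breaking surjectivity of $\text{spec}\otimes 1$.  The argument above circumvents this a posteriori: the displayed composition is already an isomorphism because the generic-fiber cycle class map is surjective onto $H^2_\et$, so the image of $\text{NS}(\Xx_{\ol k})\otimes \ZZ_\ell$ in $H^2_\et(\Xx_{\ol k}, \ZZ_\ell(1))$ must already be everything, forcing $T_\ell \text{Br}(\Xx_{\ol k}) = 0$ and thereby yielding the theorem.
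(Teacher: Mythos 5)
Your argument is sound in outline and takes a genuinely more self-contained route than the paper, whose proof of this statement is a one-line citation: \cite[Theorem 1.2]{GouJav} supplies the isomorphism $\text{Pic}(Y)\otimes \ZZ_\ell \xrightarrow{\sim} H^2_{\et}(Y,\ZZ_\ell(1))$ for rationally (chain) connected $Y$ over an algebraically closed field, applied to \emph{both} geometric fibers, and smooth proper base change then splices the two. You instead prove the generic-fiber isomorphism by characteristic-zero Hodge theory (Lefschetz $(1,1)$ with $h^{0,2}=0$, equivalently finiteness of $\text{Br}(\Xx_{\ol K})$) and transport surjectivity to the closed fiber \emph{a posteriori}; your closing remark that this forces $T_\ell\text{Br}(\Xx_{\ol k})=0$ rather than assuming it is exactly the right way to organize the argument, and it is what lets you avoid any positive-characteristic input about rational connectedness of the special fiber. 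The compatibilities you invoke (cycle class with specialization, base change with $c_1$) are standard.

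There is, however, one step that is false as written: ``$\text{Pic}^0(\Xx_{\ol k})$ is the group of $\ol k$-points of an abelian variety \dots hence divisible, so it vanishes after tensoring with $\ZZ_\ell$.'' Divisibility does not kill a group under $\otimes_{\ZZ}\ZZ_\ell$: one has $\QQ\otimes_{\ZZ}\ZZ_\ell\cong\QQ_\ell$ and $(\QQ_\ell/\ZZ_\ell)\otimes_{\ZZ}\ZZ_\ell\cong\QQ_\ell/\ZZ_\ell$, so for an abelian variety $A$ of dimension $g>0$ over $\ol{\mathbb{F}}_p$ the group $A(\ol k)\otimes\ZZ_\ell\cong(\QQ_\ell/\ZZ_\ell)^{2g}$ is nonzero. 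Worse, the injectivity you need on the closed fiber --- that $\text{Pic}(\Xx_{\ol k})\otimes\ZZ_\ell$ embeds in $H^2_{\et}(\Xx_{\ol k},\ZZ_\ell(1))$ via Kummer theory --- only follows from the injections $\text{Pic}/\ell^n\hookrightarrow H^2_{\et}(\mu_{\ell^n})$ after identifying $\text{Pic}\otimes\ZZ_\ell$ with the $\ell$-adic completion $\varprojlim\text{Pic}/\ell^n$, which again requires $\text{Pic}(\Xx_{\ol k})$ to be finitely generated. So you genuinely need $\text{Pic}^0(\Xx_{\ol k})$ to be finite (in fact trivial), not merely divisible. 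This is true and easy to supply, but it is an extra input: rational connectedness of $\Xx_{\ol K}$ in characteristic zero gives $b_1(\Xx_{\ol K})=0$; smooth proper base change in degree $1$ gives $H^1_{\et}(\Xx_{\ol k},\ZZ_\ell(1))=0$; and since $H^1_{\et}(\Xx_{\ol k},\ZZ_\ell(1))\cong T_\ell\text{Pic}(\Xx_{\ol k})$ has rank $2\dim\text{Pic}^0_{\text{red}}$, the reduced identity component of the Picard scheme of $\Xx_{\ol k}$ is a point. Hence $\text{Pic}(\Xx_{\ol k})=\text{NS}(\Xx_{\ol k})$ is finitely generated, and with that in place the remainder of your argument, including the deduction that the integral cokernel is a finite $p$-group, goes through.
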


\begin{proof}
  This follows immediately from \cite[Theorem 1.2]{GouJav} and the
  smooth and proper base change theorems for $\ell$-adic \'{e}tale
  cohomology.
\end{proof}

\begin{cor} \label{cor-GJ} \marpar{cor-GJ}
  Assume that the geometric generic fiber $\Xx_{\ol{K}}$ has effective
  first Chern class that generates the Picard group.  The
  specialization map on Picard groups is an isomorphism if $p$ is
  prime to
  $\text{Tor}(\ol{K},\Xx_{\ol{K}})\cdot i_1(\ol{K},\Xx_{\ol{K}})$.
  Also the specialization map is an isomorphism if $p$ is larger than
  the dimension $n$, if $R$ equals the Witt vectors $W(k)$, and if the
  crystalline cohomology $W(k)$-module $H^*(\Xx/W(k),W(k))$ is
  $p$-torsion-free.  In this second case, also the obstruction group
  to infinitesimal deformations, $H^2(\Xx_k,T_{\Xx_k/k})$, vanishes.
\end{cor}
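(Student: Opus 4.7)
The plan is to use Theorem~\ref{thm-GJ}, which already reduces each assertion about the specialization map to showing that its kernel and its cokernel (a priori a finite $p$-group) both vanish. Proposition~\ref{prop-stab} and Lemma~\ref{lem-spec} will control the torsion of $\text{Pic}(\Xx_{\ol{k}})$, and then either the cycle-index hypothesis (case~1) or \'{e}taleness of the relative Picard scheme (case~2) pins down the image of the effective generator $H_K\in \text{Pic}(\Xx_{\ol{K}})$.

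For case~1, since $\Xx_{\ol{K}}$ is Fano of index $1$ in characteristic $0$, it is rationally connected, so Proposition~\ref{prop-stab} gives $H^0(\Xx_k,\Omega^1_{\Xx_k/k})=0$ and Lemma~\ref{lem-spec} yields that $\text{Pic}(\Xx_{\ol{k}})$ is $p$-torsion-free. Tensoring Theorem~\ref{thm-GJ} with $\ZZ_\ell$ for each $\ell\neq p$ also rules out $\ell$-torsion and forces the rank of $\text{Pic}(\Xx_{\ol{k}})$ to equal $1$, so $\text{Pic}(\Xx_{\ol{k}})\cong \ZZ L$. Cyclicity of $\text{Pic}(\Xx_{\ol{K}})=\ZZ H_K$ together with the definition of $i_1$ produces a $1$-cycle $\beta_K$ on $\Xx_{\ol{K}}$ with $H_K\cdot \beta_K = i_1(\ol{K},\Xx_{\ol{K}})$, a nonzero integer coprime to $p$. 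After a finite extension of $R$, the scheme-theoretic closure of $\beta_K$ in $\Xx_R$ is $R$-flat, so by flatness its closed fiber $\ol{\beta}_k$ satisfies $\text{spec}_{\Xx_R/R}(H_K)\cdot \ol{\beta}_k = i_1(\ol{K},\Xx_{\ol{K}})$. In particular $\text{spec}_{\Xx_R/R}(H_K)\neq 0$, so the specialization map is injective; writing $\text{spec}_{\Xx_R/R}(H_K) = aL$ with $a>0$, the cokernel structure of Theorem~\ref{thm-GJ} forces $a$ to be a power of $p$, and the intersection identity $aL\cdot \ol{\beta}_k = i_1(\ol{K},\Xx_{\ol{K}})$ forces $a\mid i_1(\ol{K},\Xx_{\ol{K}})$, hence $a=1$.

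For case~2, Proposition~\ref{prop-stab} further provides $H^1(\Xx_k,\OO_{\Xx_k}) = H^2(\Xx_k,\OO_{\Xx_k}) = 0$, so Lemma~\ref{lem-spec} gives that $\text{Pic}_{\Xx_R/R}$ is a countable disjoint union of finite \'{e}tale $R$-schemes. Since $R$ is complete Henselian with perfect residue field, every finite \'{e}tale $R$-scheme $C$ has $C(\ol{K}) = C(\ol{k})$ canonically, yielding a bijection $\text{Pic}(\Xx_{\ol{K}}) \to \text{Pic}(\Xx_{\ol{k}})$. This bijection sends $H_K$ to the ample effective generator $H_k=\omega_{\Xx_k}^{-1}$, so the canonical identification $T_{\Xx_k/k} \cong \Omega^{n-1}_{\Xx_k/k}\otimes \omega_{\Xx_k}^{-1}$ reduces the vanishing $H^2(\Xx_k,T_{\Xx_k/k})=0$ to the Kodaira--Nakano vanishing $H^2(\Xx_k,\Omega^{n-1}_{\Xx_k/k}(H_k))=0$, which is available in positive characteristic via Deligne--Illusie \cite{DeligneIllusie} since $p>n$ and $\Xx_k$ lifts to $W_2(k)\subset W(k)$.

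The main obstacle is the Kodaira--Nakano vanishing step in case~2: it is the only point at which the Witt-vector and crystalline hypotheses enter the final argument, via the Deligne--Illusie theorem unlocking Kodaira--Nakano vanishing in positive characteristic. The cycle-theoretic argument in case~1 is more elementary but requires care in passing $\beta_K$ from $\Xx_{\ol{K}}$ down to an $R$-flat closure, for which one enlarges $R$ to a finite extension over which $\beta_K$ is already defined.
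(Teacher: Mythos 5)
Your proposal is correct and follows essentially the same route as the paper: Lemma \ref{lem-spec}, Proposition \ref{prop-stab}, and Theorem \ref{thm-GJ} control the torsion and the cokernel, the specialization of a $1$-cycle pairing to $i_1(\ol{K},\Xx_{\ol{K}})$ rules out a $p$-power jump of the index in the first case, and the Deligne--Illusie/Raynaud vanishing gives $H^2(\Xx_k,T_{\Xx_k/k})=0$ in the second. The only cosmetic difference is that you invoke the ample (Kodaira--Akizuki--Nakano) form of the vanishing for $H^2(\Xx_k,\Omega^{n-1}_{\Xx_k/k}(H_k))$, whereas the paper uses the Serre-dual antiample form $H^{n-2}(\Xx_k,\Omega^1_{\Xx_k/k}\otimes\omega_{\Xx_k})=0$.
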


\begin{proof}
  Lemma \ref{lem-spec} immediately implies the second case.
  In this case, since the relative Picard scheme is cyclic, and
  since there is an ample invertible sheaf by hypothesis, in fact the
  dual of the relative dualizing sheaf is ample, i.e., the relative
  dualizing sheaf is ``antiample''.  By Raynaud's theorem,
  \cite[Corollarie 2.8]{DeligneIllusie}, also
  $H^{n-2}(\Xx_{\fF},\Omega^1_{\Xx/\fF}\otimes \mcL)$ vanishes for
  every antiample invertible sheaf $\mcL$.  For $\mcL$ equal to
  $\omega_{\Xx/\fF}$, Serre duality then gives vanishing of the
  obstruction group,
  $$
  H^2(\Xx_{\fF},T_{\Xx/\fF})^\vee \cong
  H^{n-2}(\Xx_{\fF},\Omega^1_{\Xx/\fF} \otimes \omega_{\Xx/\fF}).
  $$

  \mni
  Also, Lemma \ref{lem-spec} implies vanishing of the $p$-torsion
  subgroup of the Picard group in the first case.  Thus, it suffices
  to prove that the Fano index $r_k$ of $\Xx_{\ol{k}}$ equals the Fano
  index $r_K$ of $\Xx_{\ol{K}}$.  If not, then by Theorem
  \ref{thm-GJ}, we have $r_K = p^e r_k$ for some integer $e>0$,
  cf. also \cite[Theorem 1.3]{GouJav}.  By specializing one-cycles
  from $\Xx_K$ to $\Xx_k$ and using constancy of intersection products
  (i.e., of Euler characteristics of invertible sheaves on flat,
  proper curves over a DVR), then $p$ divides the $1$-cycle index
  $i_1(\ol{K},\Xx_{\ol{K}})$, contradicting the hypothesis.
\end{proof}

\mni
The remainder of the proof follows closely the proof of \cite[Theorem
3]{ReidBog}.  By the above, we can assume that
$H^0(\Xx_{\ol{k}},\Omega^r_{\Xx_{\ol{k}}/\ol{k}})$ vanishes for every
$r>0$, and we can assume that the specialization map on Picard groups
is an isomorphism.  In the remainder of the proof, replace $\fF$ by
$\ol{k}$, i.e., assume that $\fF$ is algebraically closed.

\mni
Let $\mc{F}$ be a nonzero, reflexive $\OO_{\Xx_{\fF}}$-submodule of
$\Omega^1_{\Xx/\fF}$ of rank $q<n$.  Then
$\bigwedge^q_{\OO_{\Xx}}\mc{F}$ is an $\OO_{\Xx_{\fF}}$-submodule of
$\Omega^q_{\Xx/\fF}$ of rank $1$.  Since $\Xx_{\fF}$ is $\fF$-smooth,
also $\Omega^1_{\Xx/\fF}$ is locally free, hence $\Omega^q_{\Xx/\fF}$
is locally free.  Thus, this rank $1$ submodule factors through its
determinant.

\mni
The geometric Picard group is generated by the class
$H_{\fF}=c_1(T_{\Xx/\fF})$.  Thus, the determinant equals
$\OO_{\Xx_{\fF}}(eH_{\fF})$ for some integer $e$.  If $e\geq 0$, then
the determinant is effective, so that also
$H^0(\Xx_{\fF},\Omega^1_{\Xx/\fF})$ is nonzero, contrary to the
previous paragraph.  Thus, $e$ is negative.  So the slope
$\mu(\mc{F})$ is $\leq -1/q$, and this is strictly less than
$-1/n = \mu(\Omega^1_{\Xx/\fF})$.  Thus, $\Omega^1_{\Xx/\fF}$ is
stable.  Taking duals, also $T_{\Xx}$ is stable.


\section{Proof of Theorem \ref{thm-Y2}} \label{sec-Y2} 
\marpar{sec-Y2}

\mni
Let $R$ be a DVR with residue field $\fF$.  Let $h_R:Y_R\to \Xx_R$ be
a surjective, projective, generically finite morphism of flat,
finitely presented $R$-schemes.

\begin{lem} \label{lem-fflat} \marpar{lem-fflat}
  If $\Xx_{\fF}$ is integral, then there exists a dense open subset $W$ of
  $\Xx_R$ containing the generic point $\eta$ of $\Xx_{\fF}$ such that
  $W$ is regular, such that $h_R^{-1}(W)$ is Cohen-Macaulay, and such
  that $h_R^{-1}(W)\to W$ is finite and flat.
\end{lem}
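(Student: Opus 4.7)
The plan is to exhibit three open subsets of $\Xx_R$ containing $\eta$ (and all generic points of $\Xx_R$, for density): the regular locus of $\Xx_R$, an open over which $h_R$ has finite fibers (hence is finite by projectivity), and an open whose preimage in $Y_R$ lies in the Cohen--Macaulay locus. On the intersection $W$, finiteness together with regularity of $W$ and Cohen--Macaulayness of $h_R^{-1}(W)$ will promote finiteness to flatness via miracle flatness.

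First, I would verify that $\OO_{\Xx_R,\eta}$ is regular. Letting $\pi$ be a uniformizer of $R$, the $R$-flatness of $\Xx_R$ makes $\pi$ a non-zero-divisor on $\OO_{\Xx_R,\eta}$, while the integrality of $\Xx_\fF$ gives $\OO_{\Xx_R,\eta}/\pi\OO_{\Xx_R,\eta} = \OO_{\Xx_\fF,\eta} = \kappa(\eta)$, a field; hence the maximal ideal $\mf{m}_\eta = \pi\OO_{\Xx_R,\eta}$ is principal, so $\OO_{\Xx_R,\eta}$ is a DVR. Because $\Xx_R$ is excellent (being finite type over the excellent DVR $R$), its regular locus $\Xx_R^{\text{reg}}$ is open; it contains $\eta$ by the above and contains every generic point of $\Xx_R$ (where the local ring is a field).

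Next I would show that $h_R^{-1}(\eta)$ is zero-dimensional. Flatness over $R$ gives $\dim \Xx_R = \dim \Xx_\fF + 1$ and $\dim Y_R = \dim Y_\fF + 1$; generic finiteness together with surjectivity of $h_R$ force $\dim Y_R = \dim \Xx_R$, so $\dim Y_\fF = \dim \Xx_\fF$. Each irreducible component of $Y_\fF$ either surjects onto the integral scheme $\Xx_\fF$ (in which case dimensions match and the component is generically finite over $\Xx_\fF$) or has image a proper closed subset disjoint from $\eta$. In either case $h_R^{-1}(\eta) = h_\fF^{-1}(\eta)$ is zero-dimensional, and upper semicontinuity of fiber dimension combined with projectivity of $h_R$ produces an open $U_{\text{qf}} \ni \eta$ over which $h_R$ is quasi-finite and hence finite, and which contains every generic point of $\Xx_R$.

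Finally, I would verify the Cohen--Macaulay condition and conclude. Each $y \in h_R^{-1}(\eta)$ is the generic point of an irreducible component of $Y_\fF$, so repeating the DVR argument of the first step (now for $Y_R$) makes $\OO_{Y_R,y}$ a DVR, in particular Cohen--Macaulay. The Cohen--Macaulay locus $Y_R^{\text{CM}}$ is open by excellence, and since $h_R$ is finite (hence closed) on $U_{\text{qf}}$, the image of the closed set $(Y_R \setminus Y_R^{\text{CM}}) \cap h_R^{-1}(U_{\text{qf}})$ under $h_R$ is closed in $U_{\text{qf}}$ and misses $\eta$ as well as every generic point of $\Xx_R$; removing it yields an open $U_{\text{CM}}$. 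Taking $W = \Xx_R^{\text{reg}} \cap U_{\text{CM}}$, the restriction $h_R|_W \colon h_R^{-1}(W) \to W$ is finite with regular base, Cohen--Macaulay source, and equidimensional fibers, so miracle flatness (Matsumura, Theorem 23.1) makes it flat. The main obstacle is the dimension count for $h_R^{-1}(\eta)$, which uses the integrality of $\Xx_\fF$ essentially: without it, ``vertical'' components of $Y_\fF$ mapping to proper closed subsets could prevent $h_R^{-1}(\eta)$ from being zero-dimensional.
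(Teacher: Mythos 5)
Your overall strategy---regularity at $\eta$, finiteness of $h_R$ near $\eta$ via a fiber-dimension argument, Cohen--Macaulayness of the source, then miracle flatness---is exactly the strategy of the paper's proof, and most steps are sound; but one step would fail as written. You assert that surjectivity plus generic finiteness force $\dim Y_R = \dim \Xx_R$, hence $\dim Y_\fF = \dim \Xx_\fF$, and you use this to conclude that every component of $Y_\fF$ surjecting onto $\Xx_\fF$ has dimension $\dim \Xx_\fF$ and is therefore generically finite over it. But ``generically finite'' only gives finiteness over a dense open of $\Xx_R$: an irreducible component of $Y_R$ may map into a proper closed subset of $\Xx_R$ with positive-dimensional fibers, in which case $\dim Y_R > \dim \Xx_R$ and the bound $\dim V \le \dim \Xx_\fF$ for components $V$ of $Y_\fF$ has no justification. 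The paper handles this by first excising from $\Xx_R$ the images of all associated points of $Y_R$ that do not map to the generic point of $\Xx_R$ (these images are $R$-flat proper closed subsets, hence avoid $\eta$); after that reduction every component of $Y_R$ dominates $\Xx_R$, is genuinely generically finite over it, and the Krull-type dimension bookkeeping goes through. Your conclusion that $h_\fF^{-1}(\eta)$ is zero-dimensional is true, but it needs this preliminary excision or an equivalent argument tracking which components of $Y_R$ actually dominate $\Xx_R$.

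A smaller slip: for $y \in h_R^{-1}(\eta)$ you claim $\OO_{Y_R,y}$ is a DVR by ``repeating the DVR argument.'' That argument used integrality of the special fiber at the point in question, so that the quotient by the uniformizer is a field; $Y_\fF$ is not assumed reduced, so $\OO_{Y_\fF,y}$ is only an Artinian local ring and $\OO_{Y_R,y}$ need not be regular. Fortunately all you need is Cohen--Macaulayness, which does follow: $\pi$ is a nonzerodivisor and the quotient is zero-dimensional, so $\operatorname{depth} = \dim = 1$. (The paper instead gets Cohen--Macaulayness of $Y_R$ along all of $Y_\fF$ via generic flatness of $h_\fF$ over the regular $\Xx_\fF$; your pointwise depth computation at the finitely many points of $h_R^{-1}(\eta)$ is a legitimate, slightly more economical alternative once corrected.)
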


\begin{proof}
  Since $R$ is a DVR, and since $\Xx_R$ is a finite type $R$-scheme,
  the regular locus $\Xx_R^{\text{reg}}$ is an open subscheme of
  $\Xx_R$, \cite[Corollaire 6.12.6]{EGA4}.  Since $\Xx_R$ is $R$-flat,
  also the intersection of $\Xx_R^{\text{reg}}$ with $\Xx_{\fF}$
  equals the regular locus $\Xx_{\fF}^{\text{reg}}$ of $\Xx_{\fF}$,
  \cite[Proposition 6.5.1]{EGA4}.  Since $\Xx_\fF$ is integral, the
  regular locus $\Xx_{\fF}^{\text{reg}}$ contains the generic point
  $\eta= \SP \fF(\Xx_{\fF})$.  Thus, without loss of generality,
  replace $\Xx_{\fF}$ by $\Xx_{\fF}^{\text{reg}}$, and assume that
  $\Xx_{\fF}$ is regular.  Up to replacing $\Xx_{R}$ by the unique
  connected component containing $\eta$, also assume that $\Xx_{R}$ is
  integral.  Then $\Xx_R$ is $R$-flat of constant fiber dimension $d$.

\mni
By Chevalley's theorem, there is a maximal, open subscheme $U$ of
$\Xx_R$ over which $h_R$ is finite.  Since $Y_R$ is $R$-flat, every
associated point of $Y_R$ maps to the generic point of $\SP R$.  Since
$h_R$ is generically finite, this associated point also maps to $U$.
If the image does not equal the generic point of $\Xx_R$, then the
closure of the image is an $R$-flat, proper closed subset of $\Xx_R$,
hence it does not contain the generic point $\eta$ of $\Xx_{\fF}$.  Up
to replacing $\Xx_R$ by the open complement of such proper images,
assume that every associated point of $Y_R$ maps to the generic point
of $\Xx_R$, e.g., every irreducible component $Y_{R,i}$ surjects to
$\Xx_R$.  Since $h_R$ is generically finite, also $Y_{R,i}$ is
$R$-flat of constant fiber dimension $d$.  By Krull's Hauptidealsatz,
both $Y_{\fF}$ and $\Xx_{\fF}$ have pure dimension $d$.  Thus, for
every irreducible component $Y_{\fF,i}$ of $Y_{\fF}$ on which
$h_{\fF}$ has positive fiber dimension $\geq 1$, the closed image
$h_{\fF}(Y_{\fF,i})$ has dimension $\leq d-1$.  Since $\Xx_{\fF}$ is
integral of dimension $d$, this closed image is a proper closed
subset.  Thus, up to shrinking $\Xx_R$ once more, assume that
$h_{\fF}$ is also generically finite.  In other words, $U$ contains
$\eta$.  Thus, up to replacing $\Xx_R$ by $U$, assume that $h_R$ is
finite.

\mni
By Auslander's theorem, \cite[Proposition 6.11.2(i), Corollaire
6.11.3]{EGA4}, the Cohen-Macaulay locus $Y_R^{\text{CM}}$ of $Y_R$ is
an open subscheme of $Y_R$.  Since $Y_R$ is $R$-flat, and since $R$ is
Cohen-Macaulay, the intersection of $Y_R^{\text{CM}}$ with $Y_{\fF}$
equals the Cohen-Macaulay locus $Y_{\fF}^{\text{CM}}$,
\cite[Corollaire 6.3.5(ii)]{EGA4}.  By the Generic Flatness Theorem,
there exists a dense open subset $V$ of $\Xx_{\fF}$ over which
$h_{\fF}$ is flat; denote the closed complement of $V$ by $C$.  Since
$Y_{\fF}$ has pure dimension $d$, and since $h_{\fF}$ is finite, also
$h_{\fF}^{-1}(V)$ is a dense open in $Y_{\fF}$.  Since this dense open
is flat over a regular scheme, it is Cohen-Macaulay, \cite[Proposition
6.1.5]{EGA4}.  Thus, $Y_{\fF}^{\text{CM}}$ is dense in $Y_{\fF}$.  Up
to replacing $\Xx_R$ by the open complement of $C$, assume that
$Y_R^{\text{CM}}$ contains $Y_{\fF}$.  Then the closed complement $D$
of $Y_R^{\text{CM}}$ in $Y_R$ maps finitely to a closed subset of
$\Xx_R$ whose intersection with $\Xx_{\fF}$ is empty.  Thus, up to
replacing $\Xx_R$ by the open complement of the closed image of $D$,
assume that $Y_R^{\text{CM}}$ equals $Y_R$, i.e., $Y_R$ is
Cohen-Macaulay.

\mni
Finally, since $Y_R$ is Cohen-Macaulay, since $\Xx_R$ is regular, and
since $h_R$ is finite and dominant, also $h_R$ is finite and flat,
\cite[Proposition 6.1.5]{EGA4}.
\end{proof}

\begin{prop} \label{prop-pprime} \marpar{prop-pprime}
  With hypotheses as in the previous lemma, if the generic degree of
  $h_R$ is prime to $\text{char}(\fF)$, then there exists an
  irreducible component $Y_{\fF,i}$ of $Y_{\fF}$, with its reduced
  structure, such that $Y_{\fF,i} \to \Xx_{\fF}$ is surjective,
  generically \'{e}tale and tame, i.e., the finite degree is prime to
  $\text{char}(\fF)$.
\end{prop}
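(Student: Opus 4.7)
The plan is to apply Lemma \ref{lem-fflat} to reduce to the case that $h_R$ is a finite flat morphism of constant degree $n$, and then to analyze the scheme-theoretic fiber over the generic point $\eta$ of $\Xx_\fF$ and do a $p$-divisibility count. After shrinking $\Xx_R$ to the open subset $W$ provided by the lemma, $h_R$ becomes finite and flat; up to replacing $\Xx_R$ by the connected component containing $\eta$, its degree is the constant $n$ equal to the generic degree, hence prime to $p=\text{char}(\fF)$. The fiber $h_R^{-1}(\eta)$ is then $\SP A$ where $A$ is a $k(\eta)$-algebra with $\dim_{k(\eta)} A=n$. Decompose $A=\prod_i A_i$ into local Artinian $k(\eta)$-algebras, indexed by the points $y_i\in Y_R$ mapping to $\eta$. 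Let $L_i=k(y_i)$ denote the residue field of $A_i$, and write $\ell_i=\text{length}_{A_i}(A_i)$.

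The Cohen--Macaulayness of $Y_R$, its $R$-flatness, and the fact that every irreducible component of $Y_R$ dominates $\Xx_R$ (all arranged by Lemma \ref{lem-fflat}) together guarantee that the $y_i$ are exactly the generic points of the irreducible components $Y_{\fF,i}$ of $Y_\fF$, and moreover that each $Y_{\fF,i}$ surjects onto $\Xx_\fF$: by finiteness of $h_R$, each $Y_{\fF,i}$ has dimension $d=\dim \Xx_\fF$, its image is closed of dimension $d$, and $\Xx_\fF$ is integral of dimension $d$, so the image equals $\Xx_\fF$ and $y_i$ maps to $\eta$. In particular, $L_i=\fF(Y_{\fF,i})$. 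The key identity is then
\[
n \;=\; \dim_{k(\eta)} A \;=\; \sum_i \dim_{k(\eta)} A_i \;=\; \sum_i \ell_i\cdot [L_i:k(\eta)].
\]

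Because $p\nmid n$, at least one summand $\ell_i \cdot [L_i:k(\eta)]$ is prime to $p$, and for that index $i$ both factors are prime to $p$. Since $L_i/k(\eta)$ is a finite algebraic extension whose inseparable degree is a power of $p$, the hypothesis $p\nmid [L_i:k(\eta)]$ forces the inseparable degree to be $1$, so $L_i/k(\eta)$ is separable. The reduced component $Y_{\fF,i}$ therefore maps finitely and surjectively to $\Xx_\fF$ with generic degree $[L_i:k(\eta)]$ prime to $p$, and this map is generically \'{e}tale because the induced function field extension $\fF(\Xx_\fF)\hookrightarrow \fF(Y_{\fF,i})$ is separable.

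The main subtlety is not in the numerical count but in ensuring a clean bijection between the local factors $A_i$ and the irreducible components of $Y_\fF$ dominating $\Xx_\fF$: without the Cohen--Macaulayness, pure-dimensionality, and absence of non-dominant components provided by Lemma \ref{lem-fflat}, one could in principle lose summands to embedded primes or to components of $Y_\fF$ that contract into the closed complement of $W$, and the identity $n=\sum_i \ell_i[L_i:k(\eta)]$ would fail or would not capture all surjecting components. Once that bookkeeping is in place, the conclusion is a one-line divisibility argument.
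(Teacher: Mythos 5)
Your proposal is correct and follows essentially the same route as the paper: reduce via Lemma \ref{lem-fflat} to the finite flat case, compute the length of the generic fiber as $\sum_i \ell_i[\kappa(y_i):\fF(\Xx_\fF)]$, and use that a sum prime to $p$ must have a summand with residue field degree prime to $p$, hence separable. The extra bookkeeping you flag about matching local factors to dominating components is the same role Lemma \ref{lem-fflat} plays in the paper's proof.
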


\begin{proof}
  By the previous lemma, up to shrinking $\Xx_R$, assume that $\Xx_R$
  is regular and that $h_R$ is finite and flat.  Denote by
  $\text{deg}(h_R)$ the generic degree of $h_R$.

\mni
The fiber $Y_{\eta}$ of $h_R$ over $\eta$ is a finite
$\fF(\Xx_\fF)$-scheme whose length as an $\fF(\Xx_\fF)$-module equals
$\text{deg}(h_R)$.  This length equals the sum over all points
$y\in Y_{\eta}$ of the product of the length at $y$ of $Y_{\eta}$ by
the degree $[\kappa(y):\fF(\Xx_\fF)]$.  Since this sum is prime to
$p$, there exists at least one point $y$ such that
$[\kappa(y):\fF(\Xx_{\fF})]$ is prime to $p$.  In particular, since
the degree of this finite field extension is prime to $p$, this finite
field extension is separable.  Thus the closure $Y_{\fF,i}$ of this
point in $Y_{\fF}$ is an irreducible component such that the induced
morphism to $\Xx_{\fF}$ is dominant and generically \'{e}tale.
\end{proof}

\mni
Now let $M_R$ be an integral, flat, projective $R$-scheme, and let
$$
\pi_R:Y_R\to M_R, \ \ h_R:Y_R\to \Xx_R,
$$
be projective $R$-morphisms such that $\pi$ is flat with geometric
generic fiber isomorphic to $\PP^1$, and with $h_R$ surjective and
generically finite of degree $d$.

\begin{cor} \label{cor-freeline} \marpar{cor-freeline}
  If $d$ is prime to $\text{char}(\fF)$, then for at least one generic
  point $\eta$ of the closed fiber $M_{\fF}$, at least one component
  of the fiber $Y_\eta$ is a free curve in $\Xx_\fF$.  In particular,
  if the geometric generic fiber of $\pi_R$ gives a free line in
  $\Xx_K$, then also the closed fiber $\Xx_{\fF}$ contains free lines.
\end{cor}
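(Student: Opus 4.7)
The plan is to apply Proposition \ref{prop-pprime} to obtain a generically étale cover of $\Xx_\fF$ by a component of $Y_\fF$, then to deduce freeness of a generic fiber curve via a Kodaira--Spencer argument at a single étale point.

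Since $d$ is prime to $p=\text{char}(\fF)$ and $\Xx_\fF$ is integral (as the smooth, geometrically connected closed fiber of $\Xx_R$), Proposition \ref{prop-pprime} produces an irreducible component $Y_{\fF,i}$ of $Y_\fF$ whose reduced-structure morphism to $\Xx_\fF$ is dominant, generically étale, and tame. A dimension count---$\dim Y_{\fF,i}=\dim\Xx_\fF=n$ by generic étaleness, and the fibers of $\pi_\fF$ are one-dimensional by flatness and constancy of the arithmetic-genus-zero Hilbert polynomial---shows that $Y_{\fF,i}$ dominates an irreducible component of $M_\fF$. Let $\eta$ be the generic point of such a dominated component, and let $L\subset Y_\eta$ be an irreducible component (with reduced structure) of the one-dimensional fiber of $\pi_\fF|_{Y_{\fF,i}}$ over $\eta$. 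Since $Y_\eta$ is a connected arithmetic-genus-zero projective curve and hence a tree of $\PP^1$'s, after base change to $\ol{\kappa(\eta)}$ the component $L$ is isomorphic to $\PP^1$.

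To show $u:L\to\Xx_\fF$ is free, pick $y\in L$ lying in the intersection of the smooth locus of $Y_{\fF,i}$, the flat locus of $\pi_\fF|_{Y_{\fF,i}}$, and the étale locus of $h_\fF|_{Y_{\fF,i}}$; all three are dense open in $Y_{\fF,i}$, so a generic $L$ meets their intersection. At $y$, smoothness of $\pi_\fF|_{Y_{\fF,i}}$ provides a splitting $T_y Y_{\fF,i}=T_y L\oplus T_\eta M_\fF$, and $dh_y$ is an isomorphism by étaleness. Quotienting by $T_y L$ yields a surjection $T_\eta M_\fF\twoheadrightarrow (N_u)_y$, which factors through the Kodaira--Spencer deformation map $T_\eta M_\fF\to H^0(L,N_u)$ followed by evaluation at $y$; hence evaluation $H^0(L,N_u)\to (N_u)_y$ is surjective. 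Writing $N_u=\bigoplus\OO_{\PP^1}(a_i)$, surjectivity at a single point of $\PP^1$ forces every $a_i\geq 0$ (a summand of negative degree contributes nothing to $H^0$), so $N_u$ is globally generated. Combined with $T_{\PP^1}=\OO(2)$ in the sequence $0\to T_{\PP^1}\to u^*T_{\Xx_\fF}\to N_u\to 0$ and the vanishing $H^1(\PP^1,\OO(2))=0$, this yields $u^*T_{\Xx_\fF}$ globally generated, i.e., $u$ is free. For the ``in particular'' statement, when the generic fiber of $\pi_R$ is a line in $\PP^n_K$, flatness preserves the degree-one Hilbert polynomial, so $L$ is itself a line in $\Xx_\fF$, which the above argument exhibits as a free line.

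The main obstacle I expect is careful bookkeeping of field-of-definition and fiber geometry: the construction naturally produces $L$ over the transcendental extension $\ol{\kappa(\eta)}$ of $\fF$, so ``free curve in $\Xx_\fF$'' must be understood after base change; moreover, the Kodaira--Spencer identification of $T_\eta M_\fF\to (N_u)_y$ with the quotient of $dh_y$ requires all three of $Y_{\fF,i}$ smooth at $y$, $\pi_\fF|_{Y_{\fF,i}}$ flat at $y$, and $h_\fF|_{Y_{\fF,i}}$ étale at $y$, which together with $L\cong\PP^1$ give the clean tangent-space splitting used above.
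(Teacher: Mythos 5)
Your proposal is correct and follows essentially the same route as the paper: Proposition \ref{prop-pprime} produces a component of $Y_{\fF}$ that is generically \'{e}tale and tame over $\Xx_{\fF}$, one checks that this component dominates a component of $M_{\fF}$, and the corresponding component of the generic fiber of $\pi$ is the free curve (with the degree bound giving the statement about lines). The only difference is one of detail, not of strategy: the paper deduces freeness in one line from \'{e}taleness of $h_{\fF,i}$ (the standard fact that a family of rational curves with dominant, separable evaluation has free general member), whereas you spell out the underlying tangent-space splitting and evaluation-of-sections argument.
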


\begin{proof}
  By the previous proposition, there exists an integral closed
  subscheme $Y_{\fF,i}$ of $Y_{\fF}$ such that the restriction,
  $$
  h_{\fF,i}:Y_{\fF,i}\to \Xx_{\fF},
  $$
  is surjective, generically \'{e}tale, and tame.  Since $h_R$ is a
  generically finite morphism between integral schemes, this closed
  subscheme contains a nonempty open $U$ subscheme of $Y_{\fF}$.  Up
  to shrinking, assume that $h_{\fF,i}$ is \'{e}tale on $U$.  Then $U$
  is $\fF$-smooth.  Since $\pi_{\fF}$ is flat, the image $V$ of $U$ in
  $M_{\fF}$ is an integral, smooth $\fF$-scheme that is an open
  subscheme of $M_{\fF}$.  Thus, the closure $M_{\fF,i}$ of $V$ is an
  irreducible component of $M_{\fF}$.

\mni
For the generic point $\eta$ of $M_{\fF,i}$, the closed subscheme
$Y_{\fF,i}$ is the closure of an irreducible component
$Y_{\fF,i,\eta}$ of the $\pi$-fiber over $\eta$.  Finally, since
$h_{\fF,i}$ is \'{e}tale, the curve $Y_{\fF,i,\eta}$ is a free
rational curve in $\Xx_{\fF}$.

\mni
As an irreducible component of the full fiber, the degree of
$Y_{\fF,i}$ is at most the degree of the generic fiber of $\pi$.
Thus, if the generic fiber is a line, also $Y_{\fF,i}$ is a line.
\end{proof}

\begin{proof}[Proof of Theorem \ref{thm-Y2}]
  Every point $b$ of $B_D$ maps to a point of $\SP \ZZ[1/D]$, either
  $\SP \QQ$ of $\SP (\ZZ/p\ZZ)$ for a prime integer $p$ not dividing
  $D$.

\mni
Consider first the case of $\SP (\ZZ/p\ZZ)$.  The closure $C$ of
$\{ b \}$ in $B\times \SP (\ZZ/p\ZZ)$ is an integral, quasi-projective
scheme over $\ZZ/p\ZZ$.  Since $\ZZ/p\ZZ$ is perfect, this scheme is
generically smooth.  Thus, after replacing $B$ by a dense open
subscheme that contains $b$, assume that $b$ is the generic point of
an integral, closed subscheme $C$ of $B\times \SP(\ZZ/p\ZZ)$ that is
smooth over $\SP (\ZZ/p\ZZ)$.

\mni
Since $C$ and $B\times \SP (\ZZ/ p\ZZ)$ are smooth over
$\SP (\ZZ/p\ZZ)$, the closed immersion of $C$ in $B$ is a regular
embedding of some codimension, say $b$.  Also $\SP \ZZ/p\ZZ$ is itself
a regular embedding into $\SP \ZZ$ of codimension $1$.  Thus, $C$ is a
regular embedding into $B_D$ of codimension $b+1$.  Denote the blowing
up along $C$ by
$$
\nu:\wt{B}_D\to B_D.
$$
This is a regular scheme that is flat over $\SP \ZZ[1/D]$.  Moreover,
the exceptional divisor $E=\nu^{-1}(C)$ is a (Zariski) locally trivial
projective space bundle of relative dimension $b$ over $C$.

\mni
Consider the pullback $\Xx_E$ of $\Xx_M$.  Assume that there exists a
pair of $\SP (\ZZ/p\ZZ)$-morphisms
$$
h_E:Y_E\to \Xx_E, \ \ \pi_E:Y_E\to M_E,
$$
such that $h_E$ is dominant, generically \'{e}tale, and tame over
$\Xx_E$, and such that $\pi_E$ is projective and flat with geometric
generic fiber isomorphic to $\PP^1$.  For the maximal open subscheme
$M_E^o$ over which $\pi_E$ is smooth, the inverse image $Y_E^o$ is a
dense open subscheme.

\mni
By Lemma \ref{lem-fflat}, there is a dense open subscheme of $\Xx_E$
over which $Y_E$ is flat.  Similarly, since the smooth locus is open,
there is a dense open subscheme such that the inverse image in $Y_E$
is contained in the dense open $Y_E^o$, and such that $h_E$ is
\'{e}tale on the inverse image open.  Since $\Xx_E$ is flat over $E$,
the image of this dense open subscheme contains a dense open subscheme
of $E$.

\mni
Since $E$ is a projective space bundle over $C$, there exists a
rational section $C\dashrightarrow E$ whose image intersects this
dense open subscheme.  Shrink $B_D$ further so that this rational
section is regular on $C$.  The pullback of $Y_E\to E$ by this section
gives a diagram,
$$
h_C:Y_C\to C, \ \ \pi_C:Y_C\to M_C
$$
such that $h_C$ is dominant, generically \'{e}tale, and tame over $C$.
Thus, to prove the existence of $Y_C$, it suffices to find $Y_E$,
i.e., it suffices to prove the result after replacing $B_D$ by
$\wt{B}_D$ and after replacing $C$ by $E$.

\mni
Denote by $R$ the local ring of $\wt{B}_D$ at the generic point $\eta$
of $E$, and denote by $\Xx_R$ the pullback of $\Xx_B$.  Since $C$ is
smooth over $\SP (\ZZ/p\ZZ)$ and since $B_D$ is smooth over $\SP \ZZ$,
also $pR$ equals the maximal ideal of $R$, and the residue field
equals the function field of $E$.  The fraction field of $R$ equals
$K$, the function field of $B_D$, and the generic fiber of $\Xx_R$
equals the generic fiber $\Xx$.

\mni
By hypothesis, $p$ is prime to $u_1(K,\Xx)$.  By the definition of
$u_1(\fF,\Xx)$, there exists a dominant, generically finite morphism
of degree $d$ prime to $p$,
$$
h^o_K:Y^o_K\to \Xx_K,
$$
and there exists a proper, smooth $\fF$-morphism of relative dimension
$1$,
$$
\pi^o_K:Y^o_K\to M^o_K,
$$
whose geometric generic fiber is isomorphic to $\PP^1$.  Up to
shrinking $M_K^o$, assume that $\pi^o_K$ is smooth.  Then the relative
dualizing sheaf $\omega_{\pi}$ has dual $\omega_{\pi}^\vee$ that is
$\pi_K^o$-very ample with vanishing higher direct images, and with
pushforward compatible with arbitrary base change.  Up to shrinking
$M^o_K$ further, assume that the pushforward is free of rank $3$.
Then $Y^o_K$ is isomorphic to a $M_K^o$-flat Cartier divisor in
$M^o_K\times_{\SP K} \PP^2_K$ of relative degree $2$.  The graph of
$h_K^o$ gives a closed immersion of $Y^o_K$ in
$M^o_K\times_{\SP K}\PP^2_K \times_{\SP K}\Xx$.

\mni
By Nagata compactification, there exists a flat, projective scheme
$M'_R$ over $R$ whose generic fiber contains $M^o_K$ as a dense open.
Denote by $Y'_R$ the closure in
$M'_R\times_{\SP R} \PP^2_R\times_{\SP R} \Xx_R$ of $Y^o_K$.  This is
flat over a dense open subscheme of $M'_R$ that contains $M^o_K$.
There is a blowing up $M_R\to M'_R$ such that the strict transform
$Y^{\text{pre}}_R$ of $Y'_R$ is an $M_R$-flat closed subscheme of
$M_R\times_{\SP R}\PP^2_R\times_{\SP R}\Xx_R$ (e.g., take $M_R$ to be
the closure of the graph of the induced $R$-rational transformation
from $M_R$ to the $R$-relative Hilbert scheme of
$\PP^2_R\times_{\SP R}\Xx_R$).  By Corollary \ref{cor-freeline}, there
exists a free curve in the closed fiber $\Xx_{\fF}$.  By standard
limit arguments, for a dense open subscheme of $E$, there is a family
of free curves in the fibers of $\Xx_E\to E$ over this dense open
subscheme.

\mni
The proof in case $b$ equals $\SP \QQ$ is similar and easier.
\end{proof}

\begin{proof}[Proof of Corollary \ref{cor-Y2}]
  The hypotheses of Theorem \ref{thm-stab} all apply.  Thus, the
  tangent bundle of $\Xx_{\fF}$ is stable.  By the previous proof,
  also $\Xx_{\fF}$ contains free rational curves.  By Theorem
  \ref{thm-ZTstab}, the closed fiber $\Xx_{\fF}$ is separably
  rationally connected.  Thus, for a fibration over a smooth, affine,
  connected $k$-curve $B$ such that at least one $k$-fiber is
  $k$-isomorphic to $\Xx_{\fF}$ as above, there exists a $B$-section
  by \cite{dJS}.  Finally, by \cite{TZWA}, weak approximation holds at
  every $k$-point of $B$ such that the fiber is $k$-isomorphic to such
  $\Xx_{\fF}$ and at places of (strong) potentially good reduction
  where the base change fiber is $k$-isomorphic to such $\Xx_{\fF}$.
\end{proof}


\section{Free Lines on Complete Intersections} \label{sec-free}
\marpar{sec-free}

\mni
Let $R$ be a commutative ring with $1$.  Let $\Xx_R$ be a projective
$R$-scheme; for simplicity, assume that $\Xx_R$ is $R$-flat.  Let
$\OO(1)$ be an $R$-ample invertible sheaf on $\Xx_R$. Let $g$, $n$,
and $e$ be nonnegative integers.  For every $R$-scheme $T$, a
\textbf{genus-$g$, $n$-pointed stable map to $\Xx_R$ of
  $\OO(1)$-degree $e$} over $T$ is a datum
$$
\zeta = (\pi:\Cc\to T, (\sigma_i:T\to \Cc)_{i=1,\dots,n}, u:\Cc \to \Xx_R)
$$
of a proper, flat morphism $\pi$ of relative dimension $1$ whose
geometric fibers are connected, reduced, at-worst-nodal curves, of an
ordered $n$-tuple $(\sigma_i)$ of pairwise disjoint sections of $\pi$
with image in the smooth locus of $\pi$, and of an $R$-morphism $u$
such that the log relative dualizing sheaf,
$$
\omega_{\pi,\sigma}:=\omega_\pi\lt( \sum_{i=1}^n \ul{\sigma_i(T)} \rt)
$$
is $u$-ample (\textbf{stability}) and such that $u^*\OO(1)$ has
relative degree $e$ over $T$. For a genus-$g$, $n$-pointed stable map
to $\Xx_R$ of $\OO(1)$-degree $e$,
$$
\wt{\zeta} = (\wt{\pi}:\wt{\Cc}\to T, (\wt{\sigma_i}:T\to
\wt{\Cc})_{i=1,\dots,n}, \wt{u}:\wh{\Cc} \to \Xx_R), 
$$
a $2$-\textbf{morphism} from $\zeta$ to $\wt{\zeta}$ is a
$T$-isomorphism $\phi:\Cc \to \wt{\Cc}$ such that $\wt{u}\circ \phi$
equals $u$ and such that $\phi\circ \sigma_i$ equals $\wt{\sigma}_i$
for every $i=1,\dots,n$.  For every $R$-morphism $f:T'\to T$ the
$f$-\textbf{pullback} of $\zeta$ is
$$
f^*\zeta = \lt( \text{pr}_{T'}:T'\times_T \Cc \to T',
((\text{Id}_{T'},\sigma_i):T'\to T'\times_T \Cc)_{i=1,\dots,n}, u\circ
\text{pr}_{\Cc} : T'\times_T \Cc \to \Xx_R \rt).
$$
Altogether, these operations define a stack in groupoids
$\Kgnb{g,n}((\Xx_R/R,\OO(1)),e)$ over the category of $R$-schemes.

\begin{thm}\cite[Theorem 2.8]{AbramOort} \label{thm-alg} \marpar{thm-alg}
  The stack in groupoids $\Kgnb{g,n}((\Xx_R/R,\OO(1)),e)$ is an
  algebraic (Artin) stack with finite diagonal that is $R$-proper.
  There is a coarse moduli space,
  $$
  f_{g,n,e}:\Kgnb{g,n}((\Xx_R/R,\OO(1)),e) \to \kgnb{g,n}((\Xx_R/R,\OO(1)),e),
  $$
  and $\kgnb{g,n}((\Xx_R/R,\OO(1)),e)$ is a projective $R$-scheme.
  The maximal open subscheme on which $f$ is an isomorphism
  parameterizes stable maps that have only the identity automorphism.
\end{thm}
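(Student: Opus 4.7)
The plan is to realize $\Kgnb{g,n}((\Xx_R/R,\OO(1)),e)$ as a quotient stack of a locally closed subscheme of a relative Hilbert scheme by a general linear group, following the method of Fulton--Pandharipande as extended by Behrend--Manin and Abramovich--Oort to the present level of generality. The first and most critical step is a uniform boundedness statement: I would show that there exists an integer $m\gg 0$, depending only on $g$, $n$, $e$, and the Hilbert polynomial of $\Xx_R$ with respect to $\OO(1)$, such that for every stable map $\zeta=(\pi,\sigma,u)$ over any $R$-scheme $T$, the sheaf
$$
\mcL_\zeta \;:=\; (\omega_{\pi,\sigma}\otimes u^*\OO(1))^{\otimes m}
$$
is $\pi$-very ample, satisfies $R^i\pi_*\mcL_\zeta=0$ for $i>0$, and has $\pi_*\mcL_\zeta$ locally free of a fixed rank $N+1$. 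The $u$-ampleness of $\omega_{\pi,\sigma}$ built into stability is precisely what guarantees that $\omega_{\pi,\sigma}\otimes u^*\OO(1)$ has strictly positive degree on every irreducible component of every geometric fiber, handling the non-contracted components via $u^*\OO(1)$ and the $u$-contracted rational or elliptic components via the marked-point condition in $\omega_{\pi,\sigma}$.

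Next, I would use $\mcL_\zeta$ to embed $\Cc$ into $T\times_R\PP^N_R$ and then, coupling with $u$, into $T\times_R\PP^N_R\times_R\Xx_R$. The resulting subscheme, together with the ordered sections recording the marked points, has fixed discrete invariants, so the data are classified by a locally closed subscheme $H$ of the relative Hilbert scheme of $\PP^N_R\times_R\Xx_R$ over $R$ (with a product of section schemes as auxiliary parameter). Two such embeddings come from isomorphic stable maps if and only if they differ by the natural $\GL{N+1}$-action on the $\PP^N_R$-factor, identifying the stack with $[H/\GL{N+1}]$. Algebraicity follows from Grothendieck's representability of the Hilbert scheme. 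Finiteness of the diagonal is handled by observing that $\underline{\text{Isom}}_T(\zeta,\wt\zeta)$ sits as a closed subscheme of the $T$-scheme of isomorphisms of marked pointed curves commuting with $u$ and $\wt u$; stability forces each geometric fiber to be finite, so the whole Isom-scheme is finite over $T$. Properness follows from the valuative criterion via stable reduction of pointed nodal curves: given a stable map over the generic point of a DVR, extend the family of pointed curves to a semistable family, extend $u$ across the new rational components by contracting each to the appropriate limit point, and then contract unstable components to arrive at the unique limit stable map.

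Finally, since the stack is $R$-proper with finite diagonal (hence finite inertia), the Keel--Mori theorem produces a proper coarse moduli space $\kgnb{g,n}((\Xx_R/R,\OO(1)),e)$ together with $f_{g,n,e}$, and the universal property identifies the locus where $f_{g,n,e}$ is an isomorphism with the open sublocus parameterizing automorphism-free stable maps. Projectivity of the coarse moduli space can be established by descending a $\GL{N+1}$-invariant determinantal line bundle such as $\det\pi_*\mcL_\zeta^{\otimes k}$ for $k\gg 0$ and verifying its ampleness via Koll\'ar's ampleness criterion, or by exhibiting $H$ as a GIT-stable locus. The main obstacle I expect is the uniform boundedness step: the choice of $m$ must be robust enough to cover all degenerations allowed by stability, which requires a careful analysis of the dual graph combinatorics combined with cohomology and base change; once that is in hand, the remaining arguments are standard Hilbert-scheme, Keel--Mori, and valuative-criterion machinery.
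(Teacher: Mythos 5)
The paper offers no proof of this statement at all: it is quoted verbatim from \cite[Theorem 2.8]{AbramOort}, so there is nothing internal to compare against. Measured against the standard construction in the cited literature (Fulton--Pandharipande, Behrend--Manin, Abramovich--Oort), your outline is the right one: uniform boundedness via a canonically defined relatively ample sheaf, rigidification into a product of a projective space with $\Xx_R$, presentation as a quotient of a locally closed subscheme of a Hilbert scheme, finiteness of Isom-schemes from stability, properness via stable reduction, and Keel--Mori plus a descended determinantal/GIT line bundle for the projective coarse space.

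There is, however, one concrete error in the step you yourself flag as critical. The sheaf $\mcL_\zeta=(\omega_{\pi,\sigma}\otimes u^*\OO(1))^{\otimes m}$ is \emph{not} relatively ample in general, and no choice of $m$ repairs this: on an irreducible component $C_i$ of a geometric fiber with genus $g_i$, $k_i$ special points, and $u^*\OO(1)$-degree $e_i$, the degree of $\omega_{\pi,\sigma}\otimes u^*\OO(1)$ is $2g_i-2+k_i+e_i$, and stability only forces $2g_i-2+k_i>0$ when $e_i=0$. For a non-contracted component the first term can be as low as $-2$, so with $e_i=1$ or $2$ the degree is $\leq 0$, and tensor powers of a degree-$\leq 0$ line bundle are never very ample. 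This failure occurs precisely in the case this paper uses the theorem for: $g=0$, $n\leq 1$, $e=1$ gives an irreducible $\PP^1$ mapping isomorphically to a line with at most one marked point, where your $\mcL_\zeta$ has fiber degree $-1$ or $0$. The standard fix is to use $(\omega_{\pi,\sigma}\otimes u^*\OO(1)^{\otimes 3})^{\otimes m}$ (any exponent $\geq 3$ on $u^*\OO(1)$ works), which has degree $\geq 1$ on every component by the computation above; with that correction the rest of your argument goes through as in the cited sources. A smaller point: the group acting on the rigidified data should be $\PGL{N+1}$ (or $\GL{N+1}$ after a suitable rigidification of the linearization), but this does not affect the conclusion.
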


\begin{cor} \label{cor-alg} \marpar{cor-alg}
  For $g=0$, for $e=1$, and for every integer $n\geq 0$, the morphism
  $f_{0,n,1}$ is an isomorphism, i.e.,
  $\Kgnb{0,n}((\Xx_R/R,\OO(1)),e)$ is a projective $R$-scheme.
\end{cor}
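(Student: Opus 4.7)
The plan is to invoke Theorem \ref{thm-alg} and show that every geometric point of $\Kgnb{0,n}((\Xx_R/R,\OO(1)),1)$ parameterizes a stable map with only the identity automorphism; then the cited maximal open subscheme where $f_{0,n,1}$ is an isomorphism coincides with the entire stack, whence $\Kgnb{0,n}((\Xx_R/R,\OO(1)),1)$ equals the projective $R$-scheme $\kgnb{0,n}((\Xx_R/R,\OO(1)),1)$.

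First I would analyze the shape of a geometric fiber. Let $\zeta = (\pi, (\sigma_i), u)$ be a genus-$0$, $n$-pointed stable map to $\Xx_R$ of $\OO(1)$-degree $1$ over an algebraically closed field. Since $\pi$ is a connected, reduced, at-worst-nodal curve of arithmetic genus $0$, its irreducible components $\Cc_1,\dots,\Cc_m$ are each isomorphic to $\PP^1$, they form a tree under the incidence relation given by nodes, and the pullback $u^*\OO(1)$ has a well-defined degree $e_j\geq 0$ on each component with $\sum_j e_j = 1$. Hence exactly one component, say $\Cc_1$, satisfies $e_1=1$ and maps isomorphically onto a line $L\subset \Xx_R$, while each other component $\Cc_j$ with $j\geq 2$ is contracted by $u$ to a single point.

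Next I would bound automorphisms component by component. A $2$-morphism from $\zeta$ to itself is a $T$-automorphism $\phi:\Cc\to\Cc$ commuting with $u$ and fixing each $\sigma_i$. On $\Cc_1$, the restriction $\phi|_{\Cc_1}$ is an automorphism of $\PP^1$ satisfying $u\circ \phi|_{\Cc_1} = u|_{\Cc_1}$; since $u|_{\Cc_1}$ is an isomorphism onto $L$, necessarily $\phi|_{\Cc_1}$ is the identity. On any contracted component $\Cc_j$ with $j\geq 2$, stability forces the log-dualizing sheaf $\omega_{\pi,\sigma}|_{\Cc_j}$ to be $u$-ample, which on a contracted $\PP^1$ means that the number of special points (nodes of $\Cc_j$ with the rest of $\Cc$ plus marked sections landing in $\Cc_j$) is at least $3$. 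Proceeding outward along the tree from $\Cc_1$, each contracted component $\Cc_j$ meets the already-fixed part of $\Cc$ in at least one node (which $\phi$ must preserve), and the special points of $\Cc_j$ coming from its other nodes and from the sections $\sigma_i$ contained in $\Cc_j$ are preserved by $\phi$; thus $\phi|_{\Cc_j}$ is an automorphism of $\PP^1$ fixing at least three points, hence the identity.

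By induction on the tree, $\phi$ is the identity on all of $\Cc$, so $\text{Aut}(\zeta)$ is trivial. The main step, and the only non-formal one, is checking that stability together with the inductive propagation of fixed special points forces three fixed points on each contracted component; everything else is bookkeeping. Applying Theorem \ref{thm-alg} then gives that $f_{0,n,1}$ is an isomorphism onto its projective coarse moduli space, which is the desired conclusion.
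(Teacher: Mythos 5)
Your overall strategy matches the paper's first half: both identify the unique component on which $u^*\OO(1)$ has degree $1$, observe that it maps isomorphically onto a line, and conclude that any automorphism restricts to the identity there. Where you diverge is in handling the contracted components, and this is exactly where your argument has a gap. You assert that on a contracted component $\Cc_j$ adjacent to the already-fixed part, the special points ``are preserved by $\phi$,'' and conclude that $\phi|_{\Cc_j}$ fixes at least three points. But what follows from $\phi(\Cc_j)=\Cc_j$ is only that $\phi$ \emph{permutes} the set of nodes lying on $\Cc_j$; the marked points are individually fixed (since $\phi\circ\sigma_i=\sigma_i$), but the nodes joining $\Cc_j$ to its ``outward'' neighbors need not be. Concretely, if $\Cc_j$ carries no marked points and has three nodes --- one toward the fixed part and two toward further contracted subtrees --- your argument only produces one fixed point on $\Cc_j$, and a priori $\phi|_{\Cc_j}$ could swap the other two nodes together with the subtrees hanging off them. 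The statement is still true, but to rule this out you must use that each such subtree contains marked points (every contracted leaf has at most one node, hence carries at least two marked points by stability), and since marked points are fixed individually, $\phi$ cannot interchange the subtrees; only then do you get three fixed points on $\Cc_j$ and can run the induction. As written, the key step you flag as ``the only non-formal one'' is not justified.

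For comparison, the paper avoids this bookkeeping entirely: for $n=0$ it notes that stability forbids any contracted component at all (a contracted leaf would have one node and no marked points), so the curve is irreducible; for $n>0$ it adds three auxiliary points on the degree-$1$ component, checks that the resulting $(n+3)$-pointed curve is stable, and reduces triviality of the automorphism group to the known fact that $\Kgnb{0,n+3}$ is a projective scheme. That reduction packages precisely the tree induction you are attempting, so either repair your induction as above or cite the rigidity of stable pointed genus-$0$ curves directly.
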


\begin{proof}
  This is well-known, but we include a proof for completeness.  It
  suffices to check for every algebraically closed field, $R\to k$,
  every stable map defined over $k$ has trivial automorphism group.
  Consider first the case that $n$ equals $0$.  For a genus-$0$ stable
  map $u:C\to \Xx_k$ of degree $1$, there is at most one irreducible
  component on which $u^*\OO(1)$ has positive degree.  Since every
  other component must be contracted by $u$, the stability hypothesis
  implies that $C$ is irreducible.  Since $C$ is a connected,
  at-worst-nodal curve of arithmetic genus $0$, in fact $C$ is smooth.
  For every sufficiently positive integer $d$, the invertible sheaf
  $\OO(d)$ is very ample on $\Xx$, so that the stable map gives a
  morphism $C\to \PP^{N_d}$ of degree $d$.  The automorphism of every
  such morphism is cyclic of order dividing $d$.  Choosing $d$ among a
  set of sufficiently positive, relatively prime integers, the
  automorphism group of the stable map is trivial.

\mni
For a stable map with $n>0$,
$$
(C,(q_1,\dots,q_n),u:C\to \Xx_k),
$$
there is a unique irreducible component $C_i$ of $u$ such that
$u^*\OO(1)$ has degree $1$, and every automorphism acts as the
identity on $C_i$ by the previous paragraph.  Let
$q_{n+1},q_{n+2},q_{n+3} \in C_i$ be distinct $k$-points that are
different from the finitely many nodes and marked points that are
contained in $C_i$.  Then every automorphism of the stable map is, in
particular, an automorphism of the marked curve
$(C,(q_1,\dots,q_n,q_{n+1},q_{n+2},q_{n+3}))$.  The stability
condition for stable maps implies that this $n+3$-pointed curve is
also stable.  Since the stack $\Kgnb{0,n+3}$ is a projective scheme,
the claim follows.
\end{proof}

\mni
For each finite sequence of positive integers, $(d_1,\dots,d_c)$, let
$D$ be the product of $d_i!$ for $i=1,\dots,c$.  Let $n$ be the unique
integer such that the Fano index equals $2$, i.e.,
$n=1+(d_1+\dots+d_c)$.  Let $B'$ be the affine space over $\SP \ZZ$
that parameterizes ordered $c$-tuples $(g_1,\dots,g_c)$ of homogeneous
polynomials $g_i$ in variables $(t_0,\dots,t_n)$ such that
$\text{deg}(g_i) = d_i$.  Let $B\subset B'$ be the dense open
subscheme that parameterizes such $c$-tuples whose Jacobian ideal has
empty zero scheme, i.e., such that $\text{Zero}(g_1,\dots,g_c)$ is a
smooth complete intersection of type $(d_1,\dots,d_c)$ in $\PP^n$.
Let $\Xx_B\subset \PP^n_B$ be the universal smooth, complete
intersection of type $(d_1,\dots,d_c)$ and Fano index $2$.

\begin{prop} \label{prop-Y} \marpar{prop-Y}
  For the $\ZZ[1/D]$-scheme $B_D$, every point parameterizes a smooth
  complete intersection $\Xx$ in $\PP^n$ of type $(d_1,\dots,d_c)$ and
  Fano index $2$ such that there exists an integral closed subscheme
  $Y$ of the space $\Kgnb{0,1}((\Xx,\OO(1)),1)$ of pointed lines in
  $\Xx$ with $Y\to \Xx$ surjective, generically \'{e}tale and tame.
\end{prop}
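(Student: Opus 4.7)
The plan is to lift $b\in B_D$ to a DVR with characteristic-zero fraction field, perform a Bezout calculation in the generic fiber to show that the pointed Fano scheme of lines is generically finite of degree $D=\prod_j d_j!$ over $\Xx$, and then apply Proposition \ref{prop-pprime} directly.  Since $B_D$ is smooth over $\SP\ZZ[1/D]$, I would find a smooth curve $C\subseteq B_D$ through $b$ dominating $\SP\ZZ[1/D]$; taking $R$ to be the completion of the local ring of $C$ at $b$ gives a complete DVR with residue field $\kappa(b)$ and fraction field $K$ of characteristic zero, together with an induced morphism $\SP R\to B_D$.  Pulling back the universal family yields a smooth projective $R$-scheme $\Xx_R\subset\PP^n_R$ whose fibers $\Xx_K$ and $\Xx_{\kappa(b)}=\Xx$ are smooth complete intersections of Fano index $2$.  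By Corollary \ref{cor-alg}, $\mathcal{F}_R:=\Kgnb{0,1}((\Xx_R/R,\OO(1)),1)$ is a projective $R$-scheme, and evaluation at the marked point defines a projective morphism $\mathrm{ev}:\mathcal{F}_R\to\Xx_R$; explicitly, $\mathcal{F}_R$ is cut out, in the relative flag variety of pointed lines in $\PP^n_R$, by the sections of $\bigoplus_j\mathrm{Sym}^{d_j}P^\vee$ determined by $g_1,\dots,g_c$, where $P$ is the tautological rank-$2$ bundle on $\mathrm{Gr}(2,n+1)$.

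The main computation is that $\mathrm{ev}_K:\mathcal{F}_K\to\Xx_K$ is generically finite of degree $D$.  Writing $g_j(x+tv)=\sum_{k=1}^{d_j}t^k c_{j,k}(v)$ with $c_{j,k}$ homogeneous of degree $k$ in a direction $v\in\PP^{n-1}_K$ at $x\in\Xx_K$, the lines through $x$ contained in $\Xx_K$ are the common zeros in $\PP^{n-1}_K$ of all the $c_{j,k}$.  The Fano index being $2$ gives $\sum_j d_j=n-1$, so this is a system of $n-1$ equations on $\PP^{n-1}_K$ of degrees $1,2,\dots,d_j$, which by Bezout has length $\prod_j d_j!=D$ whenever it is zero-dimensional.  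To see that zero-dimensionality holds generically, consider the universal pointed Fano scheme $\mathcal{F}_{B'}\subset B'\times\mathrm{Fl}(1,2;n+1)$ over the affine space $B'\supseteq B$ of all complete intersections: projecting to the flag variety exhibits $\mathcal{F}_{B'}$ as a vector subbundle (the condition $\ell\subseteq Z(g_\bullet)$ is linear in $g_\bullet$), so $\mathcal{F}_{B'}$ is smooth, irreducible, and of the same dimension as $\Xx_{B'}$.  A single Fermat-type example with a sufficiently generic marked point provides one zero-dimensional fiber; by upper-semicontinuity of fiber dimension and irreducibility, $\mathrm{ev}_{B'}$ is generically finite of degree $D$.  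Choosing the curve $C$ so that its generic point lies in the open-dense locus of $B_D$ over which this degree is preserved (any $C$ sufficiently transverse to the closed complement suffices) transfers the conclusion to $\mathrm{ev}_K$.

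Pick an irreducible component $Y_K$ of $\mathcal{F}_K$ that dominates $\Xx_K$, and let $Y_R\subseteq\mathcal{F}_R$ be its scheme-theoretic closure.  Then $Y_R$ is integral, projective, and $R$-flat; $Y_R\to\Xx_R$ is surjective (because $\Xx_K$ is dense in the $R$-flat $\Xx_R$); and its generic degree divides $D$, so is prime to $p=\mathrm{char}(\kappa(b))$.  Applying Proposition \ref{prop-pprime} to $Y_R\to\Xx_R$ produces an irreducible component of $Y_{\kappa(b)}$, with its reduced structure, that is surjective, generically \'etale, and tame over $\Xx$; this irreducible component, sitting inside $\mathcal{F}_{\kappa(b)}=\Kgnb{0,1}((\Xx,\OO(1)),1)$, is the required $Y$.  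The hardest step is establishing that $\mathrm{ev}_{B'}$ is dominant with the correct generic degree $D$---the rest is packaged neatly inside Proposition \ref{prop-pprime}.
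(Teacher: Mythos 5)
Your overall strategy is the paper's: reduce to showing that evaluation from pointed lines is dominant and generically finite of degree $D=\prod_j d_j!$ over the generic complete intersection in characteristic $0$ (the paper phrases the same Bezout count inside $\PP_{\Xx}(T_{\Xx})$ as a complete intersection of type $(2,\dots,d_1,\dots,2,\dots,d_c)$, citing Koll\'ar and Rojtman for the generic properness that your ``irreducible incidence variety plus one Fermat example plus semicontinuity'' argument supplies), and then hand the passage to positive characteristic to Proposition \ref{prop-pprime}. Two steps need repair. The more serious one is the claim that the generic degree of a single irreducible component $Y_K$ of $\mathcal{F}_K$ dominating $\Xx_K$ ``divides $D$.'' It does not: the degrees of the dominating components, weighted by multiplicities, \emph{sum} to $D$, so your chosen component could a priori have degree divisible by $p$ even though $p\nmid D$ (e.g.\ components of degrees $p$ and $D-p$). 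The fix is either to apply Proposition \ref{prop-pprime} to the closure of all of $\mathcal{F}_K$, whose total generic degree $D$ is prime to $p$ --- that proposition is exactly the device that then extracts a single tame component on the special fiber --- or to note that since the degrees sum to the $p$-prime integer $D$, at least one component over $K$ already has degree prime to $p$, and to choose that one.

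The second issue is scope: the statement concerns \emph{every} point $b$ of $B_D$, including non-closed points, and these genuinely occur in the application (the base case of Theorem \ref{thm-main2} needs the result for the generic fiber of an arbitrary DVR). A one-dimensional $C\subseteq B_D$ containing a non-closed point $b$ must have $\overline{\{b\}}$ as an irreducible component, so $b$ is the generic point of $C$ and $C$ cannot dominate $\SP \ZZ[1/D]$; your construction of the mixed-characteristic DVR therefore only covers closed points. This is precisely why the paper routes the reduction through the proof of Theorem \ref{thm-Y2}, which takes the closure of $\{b\}$, blows up along it, and uses the local ring at the generic point of the exceptional divisor (plus a rational section of that projective bundle to get back down to $\overline{\{b\}}$) to manufacture a DVR with characteristic-zero fraction field. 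With these two adjustments your argument agrees with the paper's.
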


\begin{proof}
  By the proof of Theorem \ref{thm-Y2}, it suffices to prove that the
  evaluation morphism for $1$-pointed lines is dominant and
  generically finite of degree $D$ for the generic complete
  intersection in characteristic $0$.

\mni
For the function field $K=\QQ(M_{\QQ})$ and the generic complete
intersection $\Xx_K\subset \PP^n_K$ of type $(d_1,\dots,d_c)$, the
tangent direction of each line at the marked point defines a closed
immersion
$$
\tau:\Kgnb{0,1}((\Xx_K/K,\OO(1)),1) \hookrightarrow
\PP_{\Xx_K}(T_{X_K/K}),
$$
whose image is, relative to the projection to $\Xx_K$, generically a
complete intersection of type
$(2,3,\dots,d_1, 2,3,\dots,d_2,\dots,2,3,\dots, d_c)$,
cf. \cite[Exercise V.4.10.5, p. 272]{K}, \cite[Lemma 4.3]{Rojtman}.
In particular, the codimension of this complete intersection equals
$(d_1-1) + \dots + (d_c-1) = (d_1+\dots+d_c) - c$.  Of course the
projective bundle of the tangent bundle has relative dimension
$n-c-1$.  Thus, since $n$ equals $1+(d_1\dots+d_c)$, this complete
intersection $Y_K$ has relative dimension $0$ over $\Xx_K$, i.e., it
is generically finite over $\Xx_K$.  Finally, by B\'{e}zout's theorem,
the relative degree of this generically finite map equals
$(d_1!)\cdots (d_c!) = D$.
\end{proof}

\begin{proof}[Proof of Theorem \ref{thm-main2}]
  By Corollary \ref{cor-freeline} to prove that $\Xx_k$ is separably
  uniruled by lines, it suffices to construct an irreducible closed
  subscheme $Y_{\text{Frac}(R)}$ of the $\text{Frac}(R)$-fiber of
  $\Kgnb{0,1}((\Xx_R/R,\OO(1)),1)$ such that the induced morphism
  $Y_{\text{Frac}(R)} \to \Xx_{\text{Frac}(R)}$ is surjective and
  generically \'{e}tale of degree prime to $p$.  This is proved by
  induction on $n$ for every integer at least
  $n_0:=1+(d_1+\dots+d_c)$.

  \mni
  The base case is when $n=n_0$, in which
  case $Y_{\text{Frac}(R)}$ exists by Proposition \ref{prop-Y}.
  By way of induction, assume that $n\geq n_0+1$ and assume that the
  result holds for $n-1$.  Denote by $K$ the function field of $\PP^1$
  over $\text{Frac}(R)$.  
  Since $\text{Frac}(R)$
  is an infinite field, by Bertini's theorem, there exists a pencil of
  hyperplane sections, $\wt{\Xx}_{\PP^1} \subset
  \Xx_{\text{Frac}(R)}\times \PP^1$, whose generic fiber $\wt{\Xx}_K$
  is
  $K$-smooth.
  Thus $\wt{\Xx}_K$ is a smooth complete intersection of type
  By the induction hypothesis, there
  exists an integral closed subscheme $\wt{Y}_K$ in
  $\Kgnb{0,1}((\wt{\Xx}_K/K,\OO(1)),1)$ such that $\wt{Y}_K\to \wt{\Xx}_K$
  is surjective and generically \'{e}tale of degree prime to $p$.
  Define $\wt{Y}_{\PP^1}$ to be the closure of $\wt{Y}_K$ in
  $\Kgnb{0,1}((\wt{\Xx}_{\PP^1}/\PP^1,\OO(1)),1)$.  By covariance of
  the stack of stable maps, there is an induced proper
  $\text{Frac}(R)$-morphism,
  $$
  \Kgnb{0,1}((\wt{\Xx}_{\PP^1}/\PP^1,\OO(1)),1) \to
  \Kgnb{0,1}(\Xx_{\text{Frac}(R)}/\text{Frac}(R)),1),
  $$
  i.e., lines in the hyperplane sections of $\Xx_{\text{Frac}(R)}$
  give lines in $\Xx_{\text{Frac}(R)}$ (that happen to be contained in
  the specified hyperplane section).  Define $Y_{\text{Frac}(R)}$ to
  be the image of $\wt{Y}_{\PP^1}$ in
  $\Kgnb{0,1}((\Xx_{\text{Frac}(R)}/\text{Frac}(R),\OO(1)),1)$.  Since
  the generic fiber of $Y_{\text{Frac}(R)} \to \Xx_{\text{Frac}(R)}$
  equals the generic fiber of $\wt{Y}_{\PP^1}\to \wt{\Xx}_{\PP^1}$,
  also $Y_{\text{Frac}(R)}$ is surjective, generically \'{e}tale, and
  of degree prime to $p$ over $\Xx_{\text{Frac}(R)}$.  Thus, by
  induction on $n$, there always exists an integral closed subscheme
  $Y_{\text{Frac}(R)}$ of
  $\Kgnb{0,1}((\Xx_{\text{Frac}(R)}/\text{Frac}(R),\OO(1)),1)$ that is
  generically \'{e}tale of degree prime to $p$ over
  $\Xx_{\text{Frac}(R)}$.

\mni
By \cite{GouJav}, the closed fiber of $\Xx_R$ has cyclic Picard group.
Since the closed fiber is separably uniruled, by \cite[Theorem 5,
Corollary 9]{ZTstab}, the closed fiber is also freely rationally
connected, and the closed fiber is even separably rationally connected
when it is a complete intersection.
\end{proof}

\begin{proof}[Proof of Theorem \ref{thm-WA}]
  By hypothesis, for a tame, Galois base change by
  $\wht{\OO}_{B,b}\to R$, for a $\Gamma$-equivariant modification, the
  closed fiber over $b$ is a smooth complete intersection in $\PP^n$
  of type $(d_1,\dots,d_c)$.  By Theorem \ref{thm-main2}, the closed
  fiber is separably rationally connected.  By Theorem
  \ref{thm-TZWA2}, the original family satisfies weak approximation at
  $b$.
\end{proof}

\begin{proof}[Proof of Theorem \ref{thm-index1}]
  We apply Corollary \ref{cor-Y2}.  By \cite{ChatzLevine} and
  \cite{Rojtman}, the torsion order $\text{Tor}(\ol{K},\Xx_{\ol{K}})$
  divides $d!$, and existence of lines on such hypersurfaces implies
  that $i_1(\ol{K},\Xx_{\ol{K}})$ equals $1$.  Thus, it suffices to
  prove that for the generic Fano hypersurface $\Xx$ in $\PP^d_K$ of
  degree $d$ and Fano index $1$ in characteristic $0$, the radical of
  the uniruling index $u_1(K,\Xx)$ divides $(d!)(d+1)C_d-2^d).$ This
  is similar to the proof of Theorem \ref{thm-main2}, except using
  pointed conics in place of pointed lines, i.e., using
  $$
  \text{ev}:\Kgnb{0,1}((\Xx_K/K,\OO(1)),2) \to \Xx_K.
  $$
  Note that this requires working in characteristic prime to $2$, but
  this is already implied by the hypothesis on the characteristic.  In
  the case of a Fano hypersurface of index $1$ in characteristic $0$,
  the generic degree $e$ of this map is computed in \cite{BH},
  $$
  e = \frac{(d!)^2}{2^{d+1}} \lt( (d+1)C_d-2^d \rt).
  $$
  As in the proof of Theorem \ref{thm-WA}, separable rational
  connectedness implies weak approximation at places of (strong)
  potentially good reduction via Theorem \ref{thm-TZWA2}.
\end{proof}


\section{Complete Intersections with No Free Lines} \label{sec-nonfree}
\marpar{sec-nonfree}

\mni
For all integers $n,d\geq $, denote by $g_{n,d}$ the degree-$d$ Fermat
homogeneous polynomial
$$
g_{n,d}(t_0,\dots,t_n) = t_0^d + \dots + t_n^d.
$$

\begin{lem} \label{lem-Fermat} \marpar{lem-Fermat}
  For every algebraically closed field $k$ of characteristic $p$ prime
  to $d$, the zero scheme $\Xx_k$ of $g_{n,d}$ in $\PP^n_k$ is a
  $k$-smooth hypersurface.  If $d+1$ is $p$-nonspecial and if
  $n\geq d$, then every irreducible component of every fiber of the
  evaluation morphism,
  $$
  \text{ev}:\Kgnb{0,1}((\Xx_k/k,\OO(1)),1) \to \Xx_k,
  $$
  has dimension strictly larger than the expected fiber dimension
  $n-d-1$.
\end{lem}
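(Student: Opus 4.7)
The plan is to split the statement into smoothness and the fiber dimension analysis. Smoothness of $\Xx_k$ follows at once from the Jacobian criterion, since the partials $\partial g_{n,d}/\partial t_i = d\,t_i^{d-1}$ vanish simultaneously only at the origin when $p \nmid d$.

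For the fiber statement I would fix a point $p=[p_0:\ldots:p_n]\in \Xx_k$, parametrize lines through $p$ by directions $[q]\in\PP^{n-1}$, and expand the condition that the line $\{[p+tq]\}$ lies in $\Xx_k$:
$$
g_{n,d}(p+tq) \;=\; \sum_{k=0}^d \binom{d}{k}\, t^k\, E_k(q),\qquad E_k(q):=\sum_{i=0}^n p_i^{d-k}q_i^k.
$$
The $k=0$ coefficient is automatic from $g_{n,d}(p)=0$; for $k\in\{1,\ldots,d\}$ the coefficient vanishes identically in $q$ whenever $p\mid\binom{d}{k}$, and otherwise it imposes the nontrivial degree-$k$ equation $E_k(q)=0$. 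Setting $S=\{k\in\{1,\ldots,d\}:p\nmid\binom{d}{k}\}$ (note $d\in S$ since $\binom{d}{d}=1$), the fiber of $\text{ev}$ over $p$ is cut out by $|S|$ hypersurfaces in $\PP^{n-1}$, and Krull's Hauptidealsatz gives $n-1-|S|$ as a lower bound on the dimension of every irreducible component. Combined with the hypothesis $n\ge d$, it suffices to prove $|S|\le d-1$, i.e.\ to exhibit a single $k\in\{1,\ldots,d-1\}$ with $p\mid\binom{d}{k}$.

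The main technical step is this combinatorial claim, where the precise form of the $p$-nonspeciality hypothesis enters. Writing $d+1=p^v e$ with $\gcd(e,p)=1$ and $e>p$, the base-$p$ expansion $e=e_0+e_1 p+\cdots+e_s p^s$ has $s\ge 1$ and $e_0\in\{1,\ldots,p-1\}$, so subtracting $1$ from $d+1$ produces the base-$p$ digit of $d$ at position $v$ equal to $e_0-1$. I would then take $k=e_0\,p^v$: its unique nonzero base-$p$ digit is $e_0$ at position $v$, which strictly exceeds the corresponding digit $e_0-1$ of $d$, so by Lucas's theorem $p\mid\binom{d}{k}$. The bounds $1\le k\le d-1$ hold because $e_0\ge 1$ and $p^v(e-e_0)\ge p\ge 2$, the latter since $e-e_0\ge e_s p^s\ge p$. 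The subtlest point is verifying that $p$-nonspeciality is genuinely necessary: when $d+1=p^v$ or $d+1=p^v e$ with $1<e<p$, the base-$p$ expansion of $d$ has the form $(p-1,\ldots,p-1,e-1)$, in which case every $\binom{d}{k}$ is nonzero mod $p$, so $|S|=d$ and the dimension bound degrades to the expected value; the hypothesis is thus exactly what is required.
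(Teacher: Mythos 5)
Your proposal is correct and follows essentially the same route as the paper: both arguments reduce the excess-dimension claim to producing some $\ell\in\{1,\dots,d-1\}$ with $p\mid\binom{d}{\ell}$, which makes one of the $d$ equations cutting out the fibers of $\text{ev}$ identically zero, after which Krull's Hauptidealsatz forces every component above the expected dimension. The paper packages the geometry via the two-pointed space over $\PP^n\times\PP^n$ rather than fiberwise over a point, and obtains the vanishing binomial coefficient from the factorization $(s+t)^d=(s^{p^v}+t^{p^v})^a(s+t)^r$ rather than from Lucas's theorem, but these differences are cosmetic.
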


\begin{proof}
  Consider $\Kgnb{0,2}((\Xx_k/k,\OO(1)),1)$ with its natural
  evaluation morphism to
  $\PP^n_k\times \PP^n_k = \text{Proj}\ k[s_0,\dots,s_n] \times
  \text{Proj}\ k[t_0,\dots,t_n]$.  The image is the common zero locus
  of the collection of bihomogeneous polynomials
  $$
  g_{n,d,\ell}(s_0,\dots,s_n,t_0,\dots,t_n) = \sum_{j=0}^n
  \binom{d}{\ell} s_j^{d-\ell} t_j^\ell,
  \ \ 0\leq \ell \leq d.
  $$
  If any binomial coefficient $\binom{d}{\ell}$ is zero, then at least
  one $g_{n,d,\ell}$ is identically zero.  Then, by Krull's
  Hauptidealsatz, every irreducible component has dimension strictly
  larger than the expected dimension.  Since the difference of
  dimensions of domain and image is a lower bound on the dimension of
  every irreducible component of every fiber of a morphism, also every
  irreducible component of every fiber of $\text{ev}$ has dimension
  strictly larger than the expected fiber dimension.

\mni
By hypothesis, $d+1$ is nonspecial, i.e., $d+1=p^{v-1}e$ for a
$p$-prime integer $e>p$.  By the Division Algorithm, $d$ equals
$p^va+r$ for an integer $a\geq 0$ and an integer $r$ with
$0\leq r < p^v$.  Since $d+1=p^{v-1}e > p^v$, it follows that
$a\geq 1$.  Since $d$ is $p$-prime, also $r\geq 1$.  Since $p^v$ does
not divide $d+1$, also $r<p^v-1$.  Thus,
$$
(s+t)^d = (s+t)^{p^va}(s+t)^r = (s^{p^v}+t^{p^v})^a(s+t)^r =
$$
$$
\lt( \sum_{b=0}^a
\binom{a}{b} s^{p^v(a-b)}t^{p^vb} \rt)\lt( \sum_{q=0}^{r}
\binom{r}{q} s^{r-q}t^q \rt) = 
$$
$$
\sum_{q=0}^r\sum_{b=0}^{a} \binom{r}{q}\binom{a}{b}
s^{p^v(a-b) + r-q}t^{p^vb+q}.
$$
In particular, the coefficient of the monomial $s^{d-\ell}t^\ell$ is
nonzero only if $\ell$ is congruent modulo $p^v$ to $0,\dots,r$, i.e.,
only for $r+1$ of the total $p^v$ congruence classes.  Since
$r+1 < p^v$, there are some binomial coefficients that are identically
zero.
\end{proof}

\begin{prop} \label{prop-Fermat} \marpar{prop-Fermat}
  Let $k$ be an algebraically closed field of characteristic $p$.  Let
  $(d_1,\dots,d_c)$ be positive integers such that there exists at
  least one $i$ with $d_i$ a $p$-prime, $p$-nonspecial integer
  $\geq p$.  Let $n\geq 1+(d_1+\dots+d_c)$.  Then there exists a
  $k$-smooth complete intersection in $\PP^n_k$ of type
  $(d_1,\dots,d_c)$ of Fano index $\geq 2$ that has no free lines.
\end{prop}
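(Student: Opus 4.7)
The plan is to prove existence by an explicit construction that uses the Fermat hypersurface in one of the degrees and general forms in the others. Let $i_0$ be an index with $d_{i_0}\geq p$ satisfying the hypothesis on $p$-nonspeciality that will let us invoke Lemma~\ref{lem-Fermat}. Set $X_{i_0}:=\{g_{n,d_{i_0}}=0\}\subset \PP^n_k$, which is smooth because $p\nmid d_{i_0}$. For each $j\neq i_0$, choose a sufficiently general degree-$d_j$ form $f_j$, and define $\Xx:=X_{i_0}\cap \bigcap_{j\neq i_0}\{f_j=0\}$. A repeated application of Bertini's theorem on the smooth ambient $X_{i_0}$ ensures that $\Xx$ is a smooth complete intersection of type $(d_1,\dots,d_c)$ in $\PP^n_k$. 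The hypothesis $n\geq 1+(d_1+\dots+d_c)$ gives Fano index $i=n+1-\sum_j d_j \geq 2$, so it only remains to show that $\Xx$ has no free lines.

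Apply Lemma~\ref{lem-Fermat} to $X_{i_0}$: every irreducible component of every fiber of the evaluation morphism
$$
\text{ev}_{X_{i_0}}:\Kgnb{0,1}((X_{i_0}/k,\OO(1)),1)\to X_{i_0}
$$
has dimension strictly greater than the expected fiber dimension $n-d_{i_0}-1$. Fix any point $p\in\Xx\subset X_{i_0}$ and write $F_p(\Xx)$, $F_p(X_{i_0})$ for the corresponding fibers of the evaluation morphisms. Since a line through $p$ lies in $\Xx$ iff it lies in $X_{i_0}$ and satisfies, for each $j\neq i_0$, the vanishing of $f_j$ restricted to the line, which (once $p$ is fixed on the line and $f_j(p)=0$) amounts to $d_j$ linear conditions on the line parameter, we obtain a natural closed immersion $F_p(\Xx)\hookrightarrow F_p(X_{i_0})$ cut out locally by $\sum_{j\neq i_0}d_j$ equations.

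By Krull's Hauptidealsatz, every irreducible component of $F_p(\Xx)$ has dimension at least the dimension of its containing component of $F_p(X_{i_0})$ minus $\sum_{j\neq i_0}d_j$, hence strictly greater than
$$
(n-d_{i_0}-1)-\sum_{j\neq i_0}d_j \;=\; n-\sum_{j=1}^c d_j - 1 \;=\; i-2.
$$
On the other hand, at a free pointed line $[\ell,p]\in \Kgnb{0,1}((\Xx/k,\OO(1)),1)$, the Kontsevich space is smooth of the expected dimension, so the unique component of $F_p(\Xx)$ through $[\ell,p]$ has dimension exactly $i-2$. This contradicts the previous inequality, so no line through $p$ can be free; varying $p$ shows $\Xx$ has no free lines.

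The main technical step — and where care is needed — is the transfer of the ``too-large-fiber'' conclusion from $X_{i_0}$ to the full complete intersection $\Xx$. The dimension bookkeeping works because cutting by each additional degree-$d_j$ hypersurface imposes exactly $d_j$ conditions on pointed lines passing through a point of that hypersurface, and this number exactly matches the drop in the expected fiber dimension from $n-d_{i_0}-1$ down to $i-2$. The smoothness part is a routine Bertini argument once the Fermat hypersurface $X_{i_0}$ is known to be smooth, which uses only $p\nmid d_{i_0}$.
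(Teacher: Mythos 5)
Your proof is correct and follows essentially the same route as the paper's: take the Fermat hypersurface in the distinguished degree, cut by general forms in the remaining degrees (Bertini for smoothness), and use Lemma \ref{lem-Fermat} together with a Krull Hauptidealsatz count of the equations defining lines through a point to show every fiber of the evaluation map has dimension strictly greater than the expected $i-2$, which is incompatible with the smoothness of the fiber at a free pointed line. Your explicit transfer from $F_p(X_{i_0})$ to $F_p(\Xx)$ via the $\sum_{j\neq i_0}d_j$ extra conditions is precisely the step the paper leaves implicit; the only quibble is that those conditions are polynomial, not linear, in the direction of the line, which is harmless since the Hauptidealsatz only uses their number.
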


\begin{proof}
  Choose $g_i$ to be $g_{n,d_i}$.  By the previous lemma and the
  hypothesis that $d_i\not\equiv 0(\text{mod}\ p)$, the zero scheme is
  $k$-smooth.  By Bertini's Theorem, for general choice of homogeneous
  equation $g_j$ for $j\neq i$, the zero scheme
  $\Xx_k = \text{Zero}(g_1,\dots,g_c)$ is a $k$-smooth complete
  intersection of type $(d_1,\dots,d_c)$ in $\PP^n_k$.  Yet, by the
  previous lemma, every irreducible component of every fiber of
  $$
  \text{ev}:\Kgnb{0,1}((\Xx_k/k,\OO(1)),1) \to \Xx_k
  $$
  has dimension $\geq [n-1-(d_1+\dots+d_c)]+1$.  For a pointed free
  line, the evaluation morphism is smooth with relative dimension
  equal to the expected dimension, $[n-1-(d_1+\dots+d_c)]$.  Thus,
  there exists no free line in $\Xx_k$.
\end{proof}

\mni
For every integer $q\geq 1$, for $d=qp$, for every integer $n$, denote
by $g_{n;d}$ the following degree-$d$ homogeneous polynomial,
$$
g_{n;d} = t_0t_n^{qp-1}+\sum_{j=0}^{n-1}t_j^{qp-1}t_{j+1}.
$$
If $p$ divides $n+1$, define $\wt{g}_{n;d} = g_{n;d} + t_0^{d}$.

\begin{lem} \label{lem-Fermat2} \marpar{lem-Fermat2}
  For every algebraically closed field $k$ of characteristic $p$, for
  every $p$-divisible integer $d>0$, for every integer $n$ such that
  $n+1$ is $p$-prime, resp. such that $n+1$ is $p$-divisible, the zero
  scheme $\Xx_k$ of $g_{n;d}$ is $k$-smooth, resp. the zero scheme
  $\Xx_k$ of $\wt{g}_{n;d}$ is $k$-smooth.  If the integer $d$ is not
  $p$-special, and if $n\geq d$, then every irreducible component of
  every fiber of the evaluation morphism,
  $$
  \text{ev}:\Kgnb{0,1}((\Xx_k/k,\OO(1)),1) \to \Xx_k,
  $$
  has dimension strictly larger than the expected fiber dimension
  $n-d-1$.
\end{lem}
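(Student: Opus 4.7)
My plan is to verify smoothness by a cyclic Jacobian computation and then establish the fiber dimension estimate by following the strategy of Lemma \ref{lem-Fermat} and exhibiting a ``double gap'' in row $d-1$ of Pascal's triangle modulo $p$.

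For smoothness, since $d = qp$ gives $d - 1 \equiv -1 \pmod p$, the partial derivatives of $g_{n;d}$ take the cyclic form
$$
\partial g_{n;d}/\partial t_k = t_{k-1}^{d-1} - t_k^{d-2} t_{k+1},
$$
with indices read modulo $n+1$ (the wrap-around term $t_0 t_n^{d-1}$ completes the cycle). If some $t_{k_0}$ vanishes at a critical point, the $k_0$-th equation forces $t_{k_0 - 1} = 0$, and iterating cyclically drives the point to the origin, which is projectively excluded. Thus all $t_k$ are nonzero, and, setting $a_k := t_{k+1}/t_k$, one obtains the recurrence $a_k = a_{k-1}^{1 - d}$; integrating with $t_0 = 1$ yields $t_k = a_0^{m_k}$ for explicit exponents $m_k$, and a direct telescoping computation gives $g_{n;d}(t) = (n+1) a_0$. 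Hence when $p \nmid n + 1$ the combined system is inconsistent, so $Z(g_{n;d})$ is smooth. When $p \mid n + 1$, the orbit of $[1{:}1{:}\cdots{:}1]$ under the recurrence gives a family of singular points of $Z(g_{n;d})$; all such candidates have $t_0 \neq 0$, so $\tilde g_{n;d}(t) = g_{n;d}(t) + t_0^d = 0 + 1 = 1$ at each of them. Since $d \equiv 0 \pmod p$, the perturbation does not affect any partial derivative, so $Z(\tilde g_{n;d})$ is smooth.

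For the fiber dimension I imitate the proof of Lemma \ref{lem-Fermat}. The evaluation at the two marked points of $\Kgnb{0,2}((\Xx_k/k, \OO(1)), 1)$ maps this space into $\PP^n_k \times \PP^n_k$ with image equal to the common zero locus of the bihomogeneous polynomials $g_{n;d,\ell}(s,t)$ of bidegree $(d - \ell, \ell)$ arising as the coefficient of $u^\ell$ in $g_{n;d}(s + ut)$ for $\ell = 1, \ldots, d - 1$. Expanding each term $(s_a + ut_a)^{d-1}(s_b + ut_b)$, the coefficient of $u^\ell$ equals
$$
\binom{d-1}{\ell} s_a^{d-1-\ell} t_a^\ell s_b + \binom{d-1}{\ell-1} s_a^{d-\ell} t_a^{\ell-1} t_b,
$$
so $g_{n;d,\ell}$ vanishes identically whenever both $\binom{d-1}{\ell}$ and $\binom{d-1}{\ell-1}$ vanish modulo $p$. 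For $\tilde g_{n;d}$, the extra term $t_0^d$ contributes $\binom{d}{\ell} s_0^{d-\ell} t_0^\ell$, which also vanishes by Pascal's identity $\binom{d}{\ell} = \binom{d-1}{\ell} + \binom{d-1}{\ell-1}$. Given such an $\ell$, Krull's Hauptidealsatz then forces every irreducible component of every fiber of the evaluation map $\Kgnb{0,1}((\Xx_k/k, \OO(1)), 1) \to \Xx_k$ to have dimension strictly greater than the expected fiber dimension $n - d - 1$.

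To produce the required $\ell$, write $d = p^v e$ with $\gcd(e, p) = 1$ and $e > p$ (the hypothesis that $d$ is not $p$-special), and let $e_0 \in \{1, \ldots, p-1\}$ be the base-$p$ units digit of $e$. Since $e > p$, some digit of $e$ at position $\geq 1$ is nonzero, so $e \geq e_0 + p$ and therefore $\ell := p^v e_0 + 1$ satisfies $\ell \leq p^v(e_0 + 1) < p^v e = d$. The base-$p$ expansion of $d - 1$ reads $(p-1, \ldots, p-1, e_0 - 1, e_1, e_2, \ldots)$ from position $0$ upward, with digit $e_0 - 1$ at position $v$, strictly smaller than the digit $e_0$ carried by both $\ell$ and $\ell - 1$ at position $v$. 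By Lucas' theorem both $\binom{d-1}{\ell}$ and $\binom{d-1}{\ell - 1}$ vanish modulo $p$, completing the fiber dimension estimate (the hypothesis $n \geq d$ being what makes the bound nontrivial). The main technical obstacle will be the smoothness verification in the wrap-around case $p \mid n + 1$, where the cyclic symmetry of $g_{n;d}$ produces a whole family of candidate singularities that must be shown to all have $t_0 \neq 0$ so as to be killed by the perturbation $+\,t_0^d$.
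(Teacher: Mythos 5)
Your proposal is correct and follows essentially the same route as the paper: a cyclic analysis of the relations $t_{i-1}^{d-1}=t_i^{d-2}t_{i+1}$ showing every common zero of the partials evaluates $g_{n;d}$ to $(n+1)$ times a nonzero quantity (with the $t_0^d$ perturbation handling $p\mid n+1$), followed by exhibiting an identically vanishing bihomogeneous component $g_{n;d,\ell}$ and invoking Krull as in Lemma \ref{lem-Fermat}. The only cosmetic difference is in the bookkeeping for the vanishing component: the paper excludes a whole range of congruence classes of $\ell$ modulo $p\cdot p^{v_p(d)}$ via the factorization $(s+t)^{d-1}=(s^{p^{v}}+t^{p^{v}})^a(s+t)^{r-1}$, whereas you produce the single witness $\ell=p^{v}e_0+1$ via Lucas' theorem and dispose of the extra term $t_0^d$ by Pascal's identity $\binom{d}{\ell}=\binom{d-1}{\ell}+\binom{d-1}{\ell-1}$ -- an equivalent but arguably cleaner certificate.
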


\begin{proof}
  Consider the indices of variables as elements in $\ZZ/(n+1)\ZZ$.
  Then the partial derivative with respect to $t_i$ of $g_{n;d}$ and
  $\wt{g}_{n;d}$ equals
  $$
  \partial_{t_i} g_{n;d} = \partial_{t_i}\wt{g}_{n;d} =
  t_{i-1}^{qp-1} - t_i^{qp-2}t_{i+1}.
  $$
  In particular, multiplying by $t_i$ gives
  $$
  t_i \partial_{t_i} g_{n;d} = t_{i-1}^{qp-1}t_i - t_i^{qp-2}t_{i+1}.
  $$
  Thus, modulo the ideal of partial derivatives,
  $$
  I=\langle \partial g_{n;d} \rangle
  = \langle \partial \wt{g}_{n;d} \rangle,
  $$
  every monomial with nonzero coefficient in $g_{n;d}$ is congruent to
  a common element, say $s=t_0^{qp-1}t_1$,
  $$
  t_i^{qp-1}t_{i+1} \cong s \lt( \text{mod} I \rt), \ \ i=0,\dots,n,
  $$
  Summing over all $i$,
  $$
  g_{n;d} \cong (n+1)s \lt( \text{mod} I \rt), \ \
  g_{n;d} \cong t_0^{pq}+(n+1)s \lt( \text{mod} I \rt).
  $$
  If $n+1$ is not divisible by $p$, then for every $i$ the element
  $t_i^{qp-1}t_{i+1}$ is in the Jacobian ideal of $g_{n;d}$.  Thus the
  radical of the Jacobian ideal contains $t_i^{qp-2}t_{i+1}$.  Using
  the partial derivative above, the radical of the Jacobian ideal also
  contains $t_{i-1}^{qp-1}$, and thus $t_{i-1}$.  Since the radical of
  the Jacobian ideal contains $t_{i-1}$ for every $i$, the Jacobian
  ideal contains $\langle t_0^e,\dots,t_n^e\rangle$ for some integer
  $e\geq 1$.  Thus, when $n+1$ is not divisible by $p$, the zero locus
  of $g_{n;d}$ is smooth.

\mni
When $n+1$ is divisible by $p$, then $\wt{g}_{n;d}$ is congruent to
$t_0^{pr}$ modulo $I$.  Thus, the Jacobian ideal contains $t_0^{pr}$.
Using the partial derivative above, the radical of the Jacobian ideal
contains $t_{0}$.  Using the partial derivatives for $t_n$ and
$t_{n-1}$, also the radical contains $t_n$ and $t_{n-1}$.  Then using
the partial derivatives for $t_{n-2}$ and $t_{n-3}$, also the radical
contains $t_{n-2}$ and $t_{n-3}$, etc.  Finally, the radical of the
Jacobian ideal contains every $t_i$, $i=0,\dots,n$.  So the Jacobian
ideal contains $\langle t_0^e,\dots,t_n^e \rangle$ for some integer
$e\geq 1$.  Thus, when $n+1$ is divisible by $p$, the zero locus of
$\wt{g}_{n;d}$ is smooth.

\mni
As in the proof of Lemma \ref{lem-Fermat}, in the $k$-algebra
$k[s_0,\dots,s_n,t_0,\dots,t_n]$ write,
$$
g_{n;d}(s_0+t_0,\dots,s_n+t_n) = \sum_{\ell=0}^d
g_{n;d,\ell}(s_0,\dots,s_n,t_0,\dots,t_n), 
$$
respectively, if $p$ divides $n+1$,
$$
\wt{g}_{n;d}(s_0+t_0,\dots,s_n+t_n) = \sum_{\ell=0}^d
\wt{g}_{n;d,\ell}(s_0,\dots,s_n,t_0,\dots,t_n), 
$$
where $g_{n;d,\ell}$, resp. $\wt{g}_{n;d,\ell}$, is bihomogeneous in
$(s_0,\dots,s_n)$ and $(t_0,\dots,t_n)$ of bidegree $(\ell,d-\ell)$.
If any $g_{n;d,\ell}$ is identically zero, then by Krull's
Hauptidealsatz every irreducible component of every fiber of
$\text{ev}$ has dimension strictly larger than the expected dimension.

\mni
Now assume that the $p$-multiple, $d=pq$, is $p$-nonspecial.  Then $d$
equals $p^{v-1}e$ for a $p$-prime integer $e>p$.  By the Division
Algorithm, $d$ equals $p^va+r$ for an integer $a\geq 0$ and an integer
$r$ with $0\leq r < p^v$.  Since $d=p^{v-1}e > p^v$, it follows that
$a\geq 1$.  Since $p^v$ does not divide $d$, also $r\geq 1$.  Since
$p^{v-1}$ does divide $d$, $r$ is $\leq p^v-p^{v-1}$.  Thus,
$$
(s+t)^{d-1} = (s+t)^{p^va}(s+t)^{r-1} = (s^{p^v}+t^{p^v})^a(s+t)^{r-1} =
$$
$$
\lt( \sum_{b=0}^a
\binom{a}{b} s^{p^v(a-b)}t^{p^vb} \rt)\lt( \sum_{m=0}^{r-1}
\binom{r-1}{m} s^{r-1-m}t^m \rt) = 
$$
$$
\sum_{m=0}^{r-1}\sum_{b=0}^{a} \binom{r-1}{m}\binom{a}{b}
s^{p^v(a-b) + r-1-m}t^{p^vb+m}.
$$
In particular, the coefficient of the monomial $s^{d-1-\ell}t^\ell$ is
nonzero only if $\ell$ is congruent modulo $p^v$ to $0,\dots,r-1$.
Similarly, the summand
$$
(s_j+t_j)^{d-1}(s_{j+1}+t_{j+1}) =
(s_{j+1}+t_{j+1})\sum_{m=0}^{r-1}\sum_{b=0}^{a} \binom{r-1}{m}\binom{a}{b}
s_j^{p^v(a-b) + r-1-m}t_j^{p^vb+m},
$$
has nonzero bihomogeneous component of bidegree $(d-\ell,\ell)$ only
if $\ell$ is congruent modulo $p^v$ to $0,\dots,r$.  Summing over all
integers $j$, the bihomogeneous component $g_{n;d,\ell}$ of bidegree
$(d-\ell,\ell)$ is nonzero only if $\ell$ is congruent modulo $p^v$ to
$0,\dots, r$.  Since $r\leq p^v-p^{v-1}$, this is at most
$p^v+1-p^{v-1}$ of the total $p^v$ congruence classes.  So, when $d$
is $p$-nonspecial and $n+1$ is $p$-prime, every irreducible component
of every fiber of $\text{ev}$ has dimension strictly larger than the
expected dimension.

\mni
When $n+1$ is divisible by $p$, then $\wt{g}_{n;d}$ has one additional
monomial $t_0^d = t_0^{p^{v-1}e}$.  By the Binomial Theorem,
$$
(s_0+t_0)^{p^{v-1}e} = \sum_{b=0}^e \binom{e}{b} s_0^{d-p^{v-1}b} t_0^{p^{v-1}b}.
$$
Thus, the bihomogeneous component of bidegree $(d-\ell,\ell)$ is
nonzero only if $\ell$ is a multiple of $p^{v-1}$.  In particular, the
excluded congruence classes $p^v-p^{v-1}<\ell < p^v$ for nonzero
$g_{n;d,\ell}$ is still excluded for $\wt{g}_{n;d,\ell}$.  Every prime
$p$ is $\geq 2$.  Also, by hypothesis, $d$ is divisible by $p$, so
that $p^{v-1}$ is at least $p\geq 2$.  Thus, the number $p^{v-1}-1$ of
excluded congruence classes is $\geq 1$.  Thus, as above, every
irreducible component of every fiber of $\text{ev}$ has dimension
strictly larger than the expected dimension.
\end{proof}

\begin{prop} \label{prop-Fermat2} \marpar{prop-Fermat2}
  Let $k$ be an algebraically closed field of characteristic $p$.  Let
  $(d_1,\dots,d_c)$ be positive integers such that there exists at
  least one $i$ with $d_i$ a $p$-divisible, $p$-nonspecial integer
  $\geq p$.  Let $n\geq 1+(d_1+\dots+d_c)$.  Then there exists a
  $k$-smooth complete intersection in $\PP^n_k$ of type
  $(d_1,\dots,d_c)$ of Fano index $\geq 2$ that has no free lines.
\end{prop}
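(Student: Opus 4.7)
The plan is to mirror the proof of Proposition \ref{prop-Fermat}, using the polynomials $g_{n;d}$ and $\wt{g}_{n;d}$ furnished by Lemma \ref{lem-Fermat2} in place of the Fermat polynomial $g_{n,d}$. First I will fix an index $i$ for which $d_i$ is $p$-divisible, $p$-nonspecial, and $\geq p$. The assumption $n \geq 1+(d_1+\dots+d_c) \geq 1+d_i$ supplies the hypothesis $n \geq d_i$ required by Lemma \ref{lem-Fermat2}. I will then set $g_i := g_{n;d_i}$ when $n+1$ is $p$-prime, and $g_i := \wt{g}_{n;d_i}$ when $p \mid n+1$, so that in both cases Lemma \ref{lem-Fermat2} applies and guarantees that $\text{Zero}(g_i)$ is $k$-smooth and that every irreducible component of every fiber of the evaluation morphism $\text{ev}_i$ for pointed lines in $\text{Zero}(g_i)$ has dimension $\geq n-d_i$.

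For the remaining indices $j \neq i$, I will apply Bertini's theorem inside the smooth hypersurface $\text{Zero}(g_i)$ to choose general homogeneous polynomials $g_j$ of degree $d_j$ so that $\Xx_k := \text{Zero}(g_1,\dots,g_c)$ is a $k$-smooth complete intersection of type $(d_1,\dots,d_c)$ in $\PP^n_k$. The Fano index bound $\geq 2$ follows immediately from $n \geq 1+(d_1+\dots+d_c)$.

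The crux is transferring the excess-dimension conclusion of Lemma \ref{lem-Fermat2} from $\text{Zero}(g_i)$ to $\Xx_k$. For every $x \in \Xx_k$, the fiber $\text{ev}^{-1}(x)$ of the evaluation morphism for $\Xx_k$ is cut out inside $\text{ev}_i^{-1}(x)$ by the $\sum_{j \neq i} d_j$ Taylor-coefficient conditions along a line through $x$ imposed by the polynomials $g_j$ for $j \neq i$ (the constant term vanishes automatically since $g_j(x)=0$). Krull's Hauptidealsatz then gives that every irreducible component of $\text{ev}^{-1}(x)$ has dimension at least
$$
(n-d_i) - \sum_{j\neq i} d_j \;=\; n-(d_1+\dots+d_c),
$$
strictly larger than the expected relative dimension $n-1-(d_1+\dots+d_c)$ of $\text{ev}$. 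Since at every pointed free line the evaluation morphism is smooth of exactly the expected relative dimension, this uniform excess rules out the existence of any free line in $\Xx_k$.

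The main obstacle has already been absorbed by Lemma \ref{lem-Fermat2}: the delicate part is producing an explicit smooth hypersurface in the $p$-divisible regime whose pointed-line scheme has excess fiber dimension, and that has been handled by the polynomials $g_{n;d}$ and $\wt{g}_{n;d}$. What remains here is only the routine combination of Bertini plus Krull, essentially the same bookkeeping as in Proposition \ref{prop-Fermat}; the sole real bifurcation from the $p$-prime case is the split into $p \nmid n+1$ versus $p \mid n+1$, which merely selects between $g_{n;d_i}$ and $\wt{g}_{n;d_i}$.
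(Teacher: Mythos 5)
Your proposal is correct and follows the paper's own route: the paper's proof of Proposition \ref{prop-Fermat2} is literally ``the same as the proof of Proposition \ref{prop-Fermat}, but using Lemma \ref{lem-Fermat2} in place of Lemma \ref{lem-Fermat}'', i.e., pick $g_{n;d_i}$ or $\wt{g}_{n;d_i}$ according to whether $p$ divides $n+1$, apply Bertini for the remaining $g_j$, and use the Hauptidealsatz to transfer the excess fiber dimension of the pointed-line evaluation map to the complete intersection, contradicting the existence of a free line. Your write-up in fact makes explicit the Krull counting step that the paper leaves implicit.
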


\begin{proof}
  The proof is the same as the proof of Proposition \ref{prop-Fermat},
  but using Lemma \ref{lem-Fermat2} in place of Lemma
  \ref{lem-Fermat}.
\end{proof}

\mni
Together, Propositions \ref{prop-Fermat} and \ref{prop-Fermat2}
establish Proposition \ref{prop-sharpish}.

\bibliography{my}
\bibliographystyle{alpha}

\end{document}